\theoremstyle{plain}
\newtheorem{theorem}{Theorem}[section]
\newtheorem{lemma}[theorem]{Lemma}
\newtheorem{proposition}[theorem]{Proposition}
\newtheorem{corollary}[theorem]{Corollary}
\theoremstyle{definition}
\newtheorem{example}[theorem]{Example}
\newtheorem{remark}[theorem]{Remark}
\numberwithin{equation}{theorem}
\newtheorem{claim}[theorem]{Claim}
\begin{document}

\title[Vector fields on canonically polarized surfaces.]{Vector fields on canonically polarized surfaces.}
\author{Nikolaos Tziolas}
\address{Department of Mathematics, University of Cyprus, P.O. Box 20537, Nicosia, 1678, Cyprus}
\email{tziolas@ucy.ac.cy}
\thanks{Part of this paper was written during the author's stay at the Max Planck Institute for Mathematics in Bonn, from 01.02.2019 to 31.07.2019.}

\subjclass[2010]{Primary 14J50, 14DJ29, 14J10; Secondary 14D23, 14D22.}

\dedicatory{To my little daughter Eleonora.}

\keywords{Algebraic geometry, canonically polarized surfaces, automorphisms, vector fields, moduli stack, characteristic p.}

\begin{abstract}
This paper investigates the geometry of  canonically polarized surfaces  defined over a field of positive characteristic which have a nontrivial global vector field, and the implications that the existence of such surfaces has in the moduli problem of canonically polarized surfaces.

In particular, an explicit integer valued function $f(x)$ is obtained with the following properties. If $X$ is a  canonically polarized surface defined over an algebraically closed field of characteristic $p>0$ such that 
$p>f(K_X^2) $ and $X$ has a nontrivial global vector field, then $X$ is unirational and the algebraic fundamental group is trivial. As a consequence of this result, large classes of canonically polarized surfaces are identified whose moduli stack is Deligne-Mumford, a property that does not hold in general in positive characteristic.
\end{abstract}

\maketitle

\section{Introduction}

The objective of this paper is to investigate the geometry of canonically polarized surfaces with nontrivial global vector fields and to use the results of this investigation in order to study the moduli stack of canonically polarized surfaces in positive characteristic. An investigation with these objectives was initiated in~\cite{Tz17} where the case of smooth canonically polarized surfaces $X$ with $K_X^2\leq 2$ has been studied.

A normal projective surface $X$ defined over an algebraically closed field is called canonically polarized if and only if $K_X$ is ample and $X$ has canonical singularities, or equivalently the singularities of $X$ are rational double points. Canonically polarized surfaces are precisely the canonical models of smooth minimal surfaces of general type and they play a fundamental role in the classification problem of surfaces of general type. In fact, early on in the theory of moduli of surfaces of general type, it was realized that the moduli functor of surfaces of general type is not well behaved and that the correct objects to parametrize are not the surfaces of general type but instead their canonical models~\cite{Ko10}, i.e., the canonically polarized surfaces.

The property that a  canonically polarized surface $X$ has a nontrivial global vector field is equivalent to the property that its automorphism scheme $\mathrm{Aut}(X)$ is not smooth. The reason is that the space of global vector fields of $X$ is canonically isomorphic to $\mathrm{Hom}(\Omega_X,\mathcal{O}_X)$, the tangent space at the identity of $\mathrm{Aut}(X)$. Moreover, it is well known that if $X$ is canonically polarized then $\mathrm{Aut}(X)$ is a zero dimensional scheme of finite type over the base field. Therefore the existence of nontrivial global vector fields on $X$ is equivalent to the non smoothness of $\mathrm{Aut}(X)$ and consequently the existence of non trivial infinitesimal automorphisms of $X$. Considering that $\mathrm{Aut}(X)$ is a group scheme and every group scheme in characteristic zero is smooth, non smoothness of $\mathrm{Aut}(X)$ can happen only in positive characteristic. Therefore a  canonically polarized surface can have non trivial global vector fields only when it is defined over a field of positive characteristic. 

Examples of smooth canonically polarized surfaces surfaces with nontrivial global vector fields exist but are hard to find since by~\cite[Lemma 4.1]{Tz17} such surfaces are not liftable to characteristic zero. Such examples have been found by H. Kurke~\cite{Ku81}, W. Lang~\cite{La83} and N. I. Shepherd-Barron~\cite{SB96}. Singular examples are much easier to find and in fact there exists many examples of canonically polarized surfaces with nontrivial global vector fields that are even liftable to characteristic zero. Such an example is given in   Example~\ref{example}.

The existence of nontrivial global vector fields on canonically polarized surfaces is intimately related to fundamental properties of the local and global moduli functors, in particular the moduli stack. 

From the local moduli point of view, suppose that $X$ is a canonically polarized surface defined over a field of characteristic $p$.  If $p=0$ then the local deformation functor  $Def(X)$ is pro-representable since in this case, as explained earlier, 
$\mathrm{Hom}(\Omega_X,\mathcal{O}_X)=0$ and hence $X$ has no infinitesimal deformations~\cite[Corollary 2.6.4]{Se06}. The pro-representability of $Def(X)$ implies the existence of a universal family for the local moduli functor, an ideal solution to the moduli problem. However, if $p>0$, $X$ may have nontrivial infinitesimal automorphisms due to the existence of nontrivial global vector fields and hence $Def(X)$ is not pro-representable but only has a hull.

From the global moduli point of view, it is well known~\cite{KSB88}~\cite{Ko97} that the moduli stack of canonically polarized surfaces is a separated  Artin stack of finite type over the base field with zero dimensional stabilizers. In characteristic zero the stack is in fact a Deligne-Mumford stack. This implies that there exists a family $\mathcal{X} \rightarrow S$ such that for any variety $X$ in the moduli problem, there exists finitely many $s\in S$ such that $\mathcal{X}_s\cong X$, up to \'etale base change any other family is obtained from it by base change and that for any closed point $s \in S$, the completion $\hat{\mathcal{O}}_{S,s}$ pro-represents the local deformation functor $Def(X_s)$. However, none of these hold in general in characteristic $p>0$.  The reason for this failure is the existence of canonically polarized surfaces with non smooth automorphism scheme, or equivalently with nontrivial global vector fields~\cite[Theorem 4.1]{DM69}. In some sense then the existence of nontrivial global vector fields on canonically polarized surfaces is the obstruction for the moduli stack to be Deligne-Mumford. 


This  investigation has two main objectives. 

The first objective is to find numerical conditions, which imply that the moduli stack of canonically polarized surfaces is Deligne-Mumford and the local deformation functor pro-representable. According to~\cite[Theorem 3.1]{Tz17} such conditions exist. However their existence is due to purely theoretical reasons and no explicit conditions were obtained so far.

The second  objective is  to describe the geometry of canonically polarized surfaces which have nontrivial global vector fields and consequently their moduli stack is not Deligne-Mumford. The hope is to obtain a good insight in the geometry of such surfaces that will allow the modification of the moduli problem in order to get a better moduli theory for these surfaces. 

From the existing examples of canonically polarized surfaces with nontrivial global vector fields and the case of smooth canonically polarized surfaces with $K^2\leq 2$, one gets the feeling that surfaces with nontrivial global vector fields tend to be uniruled and simply connected~\cite{Tz17}. However non uniruled examples exist in characteristic 2~\cite{SB96}, but it is unknown if non uniruled examples exist in higher characteristics. 

The main results of this paper are the following. 


\begin{theorem}\label{intro-the-1}
Let $X$ be a canonically polarized surface over an algebraically closed field of characteristic $p>0$. Suppose that $X$ has a nontrivial global vector field, or equivalently $\mathrm{Aut}(X)$ is not reduced and  that 
\[
p > \mathrm{max} \{8(K_X^2)^3+12(K_X^2)^2+3, 4508K_X^2+3\}.
\]
Then $X$ is unirational and $\pi_1(X)=\{1\}$.
\end{theorem}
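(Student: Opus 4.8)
The plan is to exploit the nontrivial global vector field $D$ to build a purely inseparable quotient of $X$ and to prove that this quotient is a rational surface. First I would replace $D$ by the saturated, $p$-closed vector field it generates; by the Rudakov--Shafarevich theory such a field is, after normalization, either of additive type ($D^p=0$) or of multiplicative type ($D^p=D$), and in either case one may form the quotient $\pi\colon X\la Y=X/D$, a finite purely inseparable morphism of degree $p$ onto a normal surface $Y$. The entire argument then turns on understanding $Y$.

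The second step is to record the numerical comparison between $X$ and a minimal resolution $\widetilde{Y}$ of $Y$. Writing $D=\Delta+D_0$, where $\Delta$ is the divisorial part and $D_0$ has only isolated zeros, the Rudakov--Shafarevich canonical bundle formula expresses $\pi^\ast K_Y$ in terms of $K_X$ and $(p-1)\Delta$, while the accompanying second Chern class formula expresses the length of the isolated singular scheme of the foliation in terms of $c_2(X)=e(X)$, $K_X^2$ and $\Delta$. The essential input is that the divisorial and isolated zeros of $D$ are controlled by the Chern numbers of $X$; combining this with Noether's formula $12\chi(\mathcal{O}_X)=K_X^2+e(X)$ and the standard numerical bounds relating $e(X)$ to $K_X^2$, every quantity attached to $D$ becomes bounded by an explicit polynomial in $K_X^2$. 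This bookkeeping is exactly what produces the function $f$ in the statement.

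The heart of the proof is to show that the hypothesis $p>f(K_X^2)$ forces $\widetilde{Y}$ to be rational. Since the correction term in the canonical bundle formula is proportional to $p-1$, a large prime makes $K_{\widetilde{Y}}$ so negative, or constrains the invariants $K_{\widetilde{Y}}^2$ and $\chi(\mathcal{O}_{\widetilde{Y}})$ so tightly, that $\widetilde{Y}$ cannot have non-negative Kodaira dimension; a further estimate on the irregularity then rules out the possibility that $\widetilde{Y}$ is ruled over a curve of positive genus, leaving rationality as the only option. I expect this to be the main obstacle: one must treat the additive and multiplicative cases, and within them the subcase $\Delta=0$ of purely isolated zeros, separately, and one must keep careful track of the rational double points of $X$ and of the singularities introduced on $Y$, so that the passage to $\widetilde{Y}$ does not destroy the estimates.

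Finally I would transfer the two conclusions back to $X$. For unirationality, the relative Frobenius of $X$ factors as $X\xrightarrow{\pi}Y\xrightarrow{\rho}X^{(p)}$ with $\rho$ again a purely inseparable morphism of degree $p$; since $\widetilde{Y}$, and hence $Y$, is rational and $\rho$ is dominant, $X^{(p)}$, and therefore $X$ itself, is unirational. For the fundamental group, $\pi$ is a universal homeomorphism, so by the topological invariance of the \'etale site it induces an isomorphism $\pi_1(X)\cong\pi_1(Y)$; as a rational surface is simply connected, after accounting for its rational singularities, this yields $\pi_1(X)=\{1\}$ and completes the argument.
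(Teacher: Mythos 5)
Your overall skeleton --- pass to a $p$-closed vector field of additive or multiplicative type, form the purely inseparable quotient $\pi\colon X\to Y$, prove the resolution of $Y$ is rational, then transfer unirationality via the Frobenius factorization and $\pi_1$ via topological invariance of the \'etale site --- is indeed the architecture of the paper's proof, and your final transfer step is essentially correct. But the heart of your argument, the claim that $p>f(K_X^2)$ makes $K_{\widetilde{Y}}$ ``so negative, or constrains the invariants $K_{\widetilde{Y}}^2$ and $\chi(\mathcal{O}_{\widetilde{Y}})$ so tightly, that $\widetilde{Y}$ cannot have non-negative Kodaira dimension,'' is a genuine gap, and in the decisive case the premise is simply false. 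The paper shows (quoting \cite[Theorem 6.1]{Tz17}, Theorem~\ref{sec3-lemma-2}) that whenever the divisorial part $\Delta$ is nonzero and $K_X^2<p$ one lands directly in the uniruled case; so the entire difficulty is concentrated in the case $\Delta=0$, where the adjunction formula gives $K_X=\pi^{\ast}K_Y$ with no $(p-1)\Delta$ term at all. Then $K_Y$ is \emph{ample}, nothing about it is negative, and since the singularities of $Y$ need not be canonical, the minimal model $Z$ of the resolution $Y^{\prime}$ can a priori be of general type, properly elliptic, K3, Enriques, abelian or hyperelliptic. No amount of Chern-number bookkeeping rules these out.

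What actually excludes $\kappa(Z)\in\{0,1,2\}$ in the paper is a completely different mechanism, occupying Sections~\ref{sec4} and~\ref{sec-5}: one shows that each such $Z$ would force $\dim|mK_Y|\geq 1$ for a controlled $m$ (via very ampleness of $|4K_Z|$ in the general type case, the elliptic canonical bundle formula in the $\kappa=1$ case, and a delicate analysis of the configuration of exceptional curves in the K3/Enriques case), hence a positive-dimensional family of $D$-stabilized members of $|mK_X|$; Proposition~\ref{sec1-prop-8} then derives a contradiction by proving every component of such a member is rational --- using fixed-point arguments, the lifting of $D$ to normalizations (Proposition~\ref{sec1-prop4}), Tate's genus-change theorem, and the fact that $X$ is not birationally ruled. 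None of this is in your proposal, and it is precisely where the explicit polynomial $8(K_X^2)^3+12(K_X^2)^2+3$ and the constant $4508$ come from (e.g.\ the cubic term arises from the bound $m_Q(\hat F_1)\leq K_X^2$ on multiplicities of images of exceptional curves on the K3 model). Your appeal to ``an estimate on the irregularity'' to exclude ruling over a positive-genus base is likewise not how the paper proceeds: in the $\kappa(Z)=-1$ case it either contradicts ampleness of $K_Y$ directly (when $\Delta=0$) or exhibits an explicit rational integral curve of $D$ dominating the base (when $\Delta\neq 0$). So the proposal would need its central step replaced wholesale before it could be completed.
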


The contrapositive of the previous theorem provides numerical condition between $K_X^2$, and $p$ which implies the reducedness of the automorphism $\mathrm{Aut}(X)$.


If the automorphism scheme $\mathrm{Aut}(X)$ of $X$ is not smooth then $\mathrm{Aut}(X)$ contains a subgroup scheme isomorphic to either $\alpha_p$ or $\mu_p$. This is equivalent to say that if $X$ has a nontrivial global vector field then $X$ has a nontrivial global vector field $D$ such that $D^p=0$ or $D^p=D$~\cite{Tz17b},~\cite{RS76}. If $\mu_p$ is a subgroup scheme of $\mathrm{Aut}(X)$, then finer restrictions can be imposed between $K_X^2$ and $p$  which imply the unirationality of $X$.

\begin{theorem}\label{intro-the-2}
Let $X$ be a canonically polarized surface over an algebraically closed field of characteristic $p>0$.  Suppose that  $\mu_p\subset \mathrm{Aut}(X)$, or equivalently that $X$ has a nontrivial vector field of multiplicative type and that one of the following happens:
\begin{enumerate}
\item $K_X^2=1$ and $p>211$.
\item $K_X^2\geq 2$ and $p >156K_X^2+3.$
\end{enumerate}
Then $X$ is unirational and $\pi_1(X)=\{1\}$.
\end{theorem}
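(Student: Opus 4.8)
The plan is to analyze $X$ through the quotient by its $\mu_p$-action and to prove that, once $p$ exceeds the stated bound, this quotient is a \emph{rational} surface; unirationality and simple connectedness of $X$ will then be formal consequences of the purely inseparable nature of the quotient map. First I would reduce to the smooth setting. Since $X$ has only rational double points, its minimal resolution $\sigma:\tilde X\to X$ is crepant, so $K_{\tilde X}=\sigma^\ast K_X$ and in particular $K_{\tilde X}^2=K_X^2$, and $\tilde X$ is a smooth minimal surface of general type. The multiplicative vector field $D$ corresponding to $\mu_p\subset\mathrm{Aut}(X)$ lifts to a $p$-closed vector field of multiplicative type on $\tilde X$, and it suffices to prove both conclusions for $\tilde X$: a resolution of a normal surface induces a surjection $\pi_1(\tilde X)\twoheadrightarrow\pi_1(X)$, and a dominant rational map onto $\tilde X$ from a rational surface composes with $\sigma$ to one onto $X$.

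Next I would form the quotient $\pi:\tilde X\to Y=\tilde X/D$ by the foliation generated by $D$. This is a finite purely inseparable, hence radicial, morphism of degree $p$; the surface $Y$ has only rational singularities, and the relative Frobenius of $\tilde X$ factors as $\tilde X\xrightarrow{\pi}Y\to\tilde X^{(p)}$. Consequently, if the minimal resolution $\tilde Y$ of $Y$ is rational, then $\tilde X^{(p)}$, and hence $\tilde X$, is dominated by a rational surface and is therefore unirational; and since $\pi$ is a universal homeomorphism it identifies the étale fundamental groups of $\tilde X$ and $Y$, so that $\pi_1(\tilde X)\cong\pi_1(Y)$ is a quotient of $\pi_1(\tilde Y)=\{1\}$. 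Thus the whole theorem reduces to the single assertion
\[
Y \text{ is a rational surface.}
\]

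To establish rationality of $Y$ I would invoke the numerical machinery of Rudakov--Shafarevich for quotients by $p$-closed foliations. The canonical bundle formula relating $K_{\tilde X}$, $\pi^\ast K_Y$, and $(p-1)$ times the divisorial part $\Delta$ of the zero scheme of $D$, together with the companion formula for $c_2$ counting the isolated fixed points, expresses $K_{\tilde Y}^2$, $c_2(\tilde Y)$ and $\chi(\mathcal{O}_{\tilde Y})$ in terms of $K_X^2=K_{\tilde X}^2$, of $\Delta$, and of the local invariants of the fixed locus. This is where the multiplicative hypothesis is decisive and yields the sharper constants: a vector field of multiplicative type is, locally at each fixed point, diagonalizable, so its singularities and the induced singularities of $Y$ are toric, and their discrepancies and Euler-number contributions can be bounded far more tightly than in the general (additive) situation. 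Feeding these bounds into Noether's formula, I would bound $K_{\tilde Y}^2$, the irregularity $q(\tilde Y)$ and the plurigenera, and then use the Castelnuovo criterion together with the Enriques--Bombieri--Mumford classification of surfaces in positive characteristic to conclude that $Y$ cannot be of non-negative Kodaira dimension nor irrational ruled, hence is rational. The point is that the inequality forcing this conclusion fails only when $p$ is at most a fixed expression in $K_X^2$, and the thresholds $p>156K_X^2+3$ for $K_X^2\ge 2$ and $p>211$ for $K_X^2=1$ are precisely the values beyond which every competing possibility is excluded.

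The main obstacle is exactly this final numerical step: controlling the contributions of the fixed locus of $D$ and of the singularities of $Y$ sharply enough to produce bounds that are \emph{linear} in $K_X^2$. One must bound both the degree of the divisorial fixed part $\Delta$, using the nefness and bigness of $K_{\tilde X}$ and the constraints on sub-line-bundles of $T_{\tilde X}$, and the number and local type of the isolated fixed points, and then check that these bounds are incompatible with $\kappa(Y)\ge 0$ once $p$ passes the stated value. I expect the case $K_X^2=1$ to be the most delicate, since the numerical invariants leave almost no room to maneuver, and I anticipate that it requires a separate hands-on analysis, which accounts for its distinct and larger bound $p>211$.
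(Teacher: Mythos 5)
Your high-level skeleton agrees with the paper's: quotient by the foliation generated by $D$, show the resulting surface is rational, and deduce unirationality and triviality of $\pi_1$ from the fact that the quotient map is purely inseparable (it factors the $k$-linear Frobenius and identifies \'etale sites). But two of your steps have genuine gaps. First, the assertion that $D$ lifts to a vector field on the minimal resolution is false in general in positive characteristic: Example~\ref{example} exhibits a canonically polarized surface whose vector fields do not fix all the singular points and therefore do not lift to the resolution. The paper must first bound the number of singular points of $X$ (Theorem~\ref{sec11-th-1}, itself requiring the existence of the vector field and a Riemann--Roch computation of $\chi(T_X)$) in order to prove that, under the stated bounds on $p$, every singular point is fixed and $D$ does lift (Theorem~\ref{sec11-th-2}); without this your reduction to the smooth case does not get off the ground.

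Second, and more seriously, the heart of the theorem is the rationality of the quotient, and your proposed route --- Rudakov--Shafarevich numerics, Noether's formula, bounds on the fixed locus, then Castelnuovo's criterion --- will not deliver it. Castelnuovo requires $P_2=q=0$, but under the hypotheses of the theorem the quotient can a priori satisfy $p_g(Y)=1$: this is exactly the paper's K3 case in Proposition~\ref{sec5-prop}, where $K_Y$ is supported on images of exceptional curves and $p_g(Y)\neq 0$, so no purely numerical argument of the kind you sketch can kill the plurigenera. What the paper actually does is a case analysis on the Kodaira dimension of the minimal model $Z$ of the resolution of $Y$, and each value $\kappa(Z)\geq 0$ is excluded by a geometric, not numerical, argument. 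The key tool is Proposition~\ref{sec1-prop-8} and Corollary~\ref{sec1-cor-8}: the system $|mK_X|$ cannot contain a positive-dimensional subsystem of curves stabilized by $D$, proved by showing that all components of such curves would be rational (lifting $D$ to normalizations and producing fixed points there) and then invoking Tate's genus-change theorem to contradict $p_a<(p-1)/2$. Even in the remaining case $\kappa(Z)=-1$ one must still rule out irrational ruled quotients, which the paper does by producing a rational integral curve of $D$ dominating the base of the ruling (Proposition~\ref{sec6-prop}); your sketch does not address this. So while your frame is right, the decisive middle of the proof is missing, and the specific mechanism you propose for it would fail.
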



The previous results have immediate applications to the structure of the local and global moduli problems of canonically polarized surfaces. 

\begin{theorem}\label{intro-the-4}
Let $X$ be a canonically polarized surface defined over an algebraically closed field of characteristic $p>0$. Suppose that $\pi_1(X)\not=\{1\}$ and that
\[
p > \mathrm{max} \{8(K_X^2)^3+12(K_X^2)^2+3, 4508K_X^2+3\}.
\]
Then $Def(X)$ is pro-representable.
\end{theorem}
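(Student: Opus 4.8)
The plan is to deduce the statement as a direct consequence of Theorem~\ref{intro-the-1}, together with the standard infinitesimal criterion for pro-representability of the local deformation functor. Recall, as explained in the introduction, that the space of global vector fields of $X$ is canonically identified with $\mathrm{Hom}(\Omega_X,\mathcal{O}_X)$, the tangent space at the identity of the group scheme $\mathrm{Aut}(X)$, and that this space measures the nontrivial infinitesimal automorphisms of $X$. By~\cite[Corollary 2.6.4]{Se06}, if $\mathrm{Hom}(\Omega_X,\mathcal{O}_X)=0$, so that $X$ admits no nontrivial infinitesimal automorphisms, then $Def(X)$ is pro-representable. Hence it suffices to show that, under the hypotheses of the theorem, $X$ has no nontrivial global vector field.

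I would establish this vanishing by contraposition, using Theorem~\ref{intro-the-1}. The numerical hypothesis
\[
p > \mathrm{max}\{8(K_X^2)^3+12(K_X^2)^2+3,\, 4508K_X^2+3\}
\]
is precisely the bound appearing in that theorem. So suppose, for contradiction, that $X$ carries a nontrivial global vector field. Then both hypotheses of Theorem~\ref{intro-the-1} are satisfied, and it follows that $X$ is unirational and $\pi_1(X)=\{1\}$. This directly contradicts the standing assumption $\pi_1(X)\neq\{1\}$. Therefore $X$ has no nontrivial global vector field, equivalently $\mathrm{Hom}(\Omega_X,\mathcal{O}_X)=0$.

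Combining the two steps, the vanishing $\mathrm{Hom}(\Omega_X,\mathcal{O}_X)=0$ feeds into~\cite[Corollary 2.6.4]{Se06} to yield the pro-representability of $Def(X)$, which is the desired conclusion.

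Since all of the geometric content is packaged inside Theorem~\ref{intro-the-1}, which may be assumed, I do not anticipate any serious obstacle; the only point requiring attention is to confirm that the pro-representability criterion of~\cite[Corollary 2.6.4]{Se06} applies in our possibly singular setting. This is harmless: $X$ is projective with rational double points, $\mathrm{Aut}(X)$ is a zero-dimensional group scheme of finite type whose tangent space at the identity is exactly $\mathrm{Hom}(\Omega_X,\mathcal{O}_X)$, and the vanishing of this space is precisely the situation in which the cited criterion produces pro-representability. No hypotheses beyond those already recorded in the introduction are needed.
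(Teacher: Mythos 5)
Your argument is correct and coincides with the paper's own proof: the author likewise deduces Theorem~\ref{intro-the-4} from Theorem~\ref{intro-the-1} together with~\cite[Corollary 2.6.4]{Se06}, noting that the hypotheses force the absence of infinitesimal automorphisms. The contrapositive use of Theorem~\ref{intro-the-1} via the assumption $\pi_1(X)\neq\{1\}$ is exactly the intended mechanism.
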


\begin{theorem}\label{intro-the-3}
Let $k$ be a field of characteristic $p>0$ and $a\in \mathbb{Z}$  such that
\[
p > \mathrm{max} \{8a^3+12a^2+3, 4508a+3\}.
\]
Let $\mathcal{M}_{a}^{\mathrm{ntfg}}$ be the moduli stack of canonically polarized surfaces $X$ with $K_X^2= a$,   and nontrivial fundamental group. 
Then  $\mathcal{M}_{a}^{\mathrm{ntfg}}$ is Deligne-Mumford.
\end{theorem}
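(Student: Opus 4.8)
The plan is to reduce the Deligne--Mumford property to the reducedness (equivalently smoothness) of the automorphism group schemes of the objects of $\mathcal{M}_a^{\mathrm{ntfg}}$, and then to rule out non-reduced automorphism schemes using the contrapositive of Theorem~\ref{intro-the-1}.

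First I would recall the structural input stated in the introduction: the moduli stack of canonically polarized surfaces is a separated Artin stack of finite type over $k$ with zero-dimensional stabilizers, and $\mathcal{M}_a^{\mathrm{ntfg}}$ is the substack obtained by fixing $K_X^2=a$ and imposing $\pi_1(X)\neq\{1\}$, so it inherits these properties. For such a stack the Deligne--Mumford property is equivalent to the diagonal being unramified, which by~\cite[Theorem 4.1]{DM69} amounts to requiring that the automorphism group scheme $\mathrm{Aut}(X)$ of every object be unramified over the base, i.e.\ (being already of finite type and zero-dimensional) \'etale, equivalently reduced and smooth.

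Next, since smoothness of the diagonal can be tested on geometric points, I would fix a geometric point, that is a canonically polarized surface $X$ over an algebraically closed field of characteristic $p$ with $K_X^2=a$ and $\pi_1(X)\neq\{1\}$, and show that $\mathrm{Aut}(X)$ is reduced. Suppose toward a contradiction that it is not. Then, as recalled in the introduction, the non-reducedness of $\mathrm{Aut}(X)$ is equivalent to $X$ carrying a nontrivial global vector field. Since $p>\mathrm{max}\{8a^3+12a^2+3,\,4508a+3\}$ with $a=K_X^2$, Theorem~\ref{intro-the-1} applies and forces $\pi_1(X)=\{1\}$, contradicting the defining condition of $\mathcal{M}_a^{\mathrm{ntfg}}$. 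Hence $\mathrm{Aut}(X)$ is reduced for every such $X$, and the stack is Deligne--Mumford.

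The routine but necessary bookkeeping is the passage between arbitrary points of the stack and geometric points: I would check that the condition $K_X^2=a$ is preserved under base change to the algebraic closure, that the space of global vector fields $\mathrm{Hom}(\Omega_X,\mathcal{O}_X)$ commutes with this flat base change (so that non-reducedness of $\mathrm{Aut}(X)$ is detected on the geometric fibre), and that the fundamental group appearing in the definition of $\mathcal{M}_a^{\mathrm{ntfg}}$ is the geometric one, so that Theorem~\ref{intro-the-1} is directly applicable. The main conceptual step is the first one, namely correctly invoking~\cite[Theorem 4.1]{DM69} to translate ``Deligne--Mumford'' into the pointwise reducedness of the automorphism schemes; once that translation is in place, Theorem~\ref{intro-the-1} closes the argument at once.
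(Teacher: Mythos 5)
Your proposal is correct and follows exactly the paper's argument: the paper likewise deduces Theorem~\ref{intro-the-3} by combining the contrapositive of Theorem~\ref{intro-the-1} (a nontrivial vector field would force $\pi_1(X)=\{1\}$, contradicting the defining condition of $\mathcal{M}_a^{\mathrm{ntfg}}$, so every $\mathrm{Aut}(X)$ is reduced) with~\cite[Theorem 4.1]{DM69}. The base-change bookkeeping you mention is a reasonable extra precaution but is not part of the paper's one-line proof.
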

Theorem~\ref{intro-the-4} is an immediate consequence of Theorem~\ref{intro-the-1} and~\cite[Corollary 2.6.4]{Se06} while Theorem~\ref{intro-the-3} is a consequence of Theorem~\ref{intro-the-1} and~\cite[Theorem 4.1]{DM69} since the assumptions in both theorems imply that the automorphism scheme is reduced and that there exist no infinitesimal automorphisms. 

Taking into consideration the breadth of the possible values of the fundamental group of canonically polarized surfaces (it can be finite or infinite)~\cite{BCP11}, one sees that the previous results apply to a very large class of canonically polarized surfaces.

There are a few comments that I would like to make regarding the statement of Theorems~\ref{intro-the-1},~\ref{intro-the-2}.

The reason that the cases $K_X^2=1$ and $K_X^2\geq 2$ have been distinguished in Theorem~\ref{intro-the-2} is the following. In the proof of  Theorem~\ref{intro-the-2}, it is necessary to work with a base point free pluricanonical linear system $|mK_X|$. If $K_X^2=1$, then $|4K_X|$ is base point free while if $K_X^2\geq 2$, $|3K_X|$ is base point free~\cite{Ek88}. Otherwise the proofs are identical. One could work with $|4K_X|$ in both cases and get a unified statement but in this case the bounds obtained would be weaker.


The bounds on $K^2$ obtained in Theorem~\ref{intro-the-1} are not optimal if applied in specific cases. In particular, take the case when $K_X^2=1$. Then Theorem~\ref{intro-the-1} says that $X$ is unirational and simply connected if $p>4511$. However, if $X$ is smooth, it has been proved in~\cite{Tz17}, that $X$ is unirational and simply connected for all $p$ except possibly for $p= 3,5,7$. I believe that the methods developed in this paper to treat singular surfaces together with the techniques in~\cite{Tz17} will make it possible to obtain much finer bounds than those obtained in Theorem~\ref{intro-the-1} to the case of singular canonically polarized surfaces with $K_X^2=1$.

However, I believe that the strength of Theorem~\ref{intro-the-1} lies in its generality and not the optimality of the bounds obtained when applied in specific cases. The results apply to every canonically polarized surface and not to a specific class of them. In individual cases, like the cases when $K_X^2 \leq 2$ which have been treated in~\cite{Tz17} finer results might be obtained by exploiting known results about the geometry of the surfaces in question.

A desired result would be to obtain an inequality of the form $p>f(K_X^2)$,  which implies the smoothness of 
$\mathrm{Aut}(X)$. Such a result will make it possible to obtain a theorem like Theorem~\ref{intro-the-3} for canonically polarized surfaces whose fundamental group is not trivial as well. However, the bounds for $p$ are most likely going to be larger than those in Theorems~\ref{intro-the-1},~\ref{intro-the-2} making such a result weaker, since it would cover less cases, compared to Theorems~\ref{intro-the-1},~\ref{intro-the-2} for surfaces whose fundamental group is not trivial. I believe that a method based on the methods used in this paper should provide such a bound. However, at the moment I am unable to do so.

The reason that in Theorem~\ref{intro-the-2} I was able to obtain better bounds in the case when $X$ has a vector field of multiplicative type, or equivalently when $\mu_p$ is a subgroup scheme of $\mathrm{Aut}(X)$, is that $\mu_p$ is a diagonalizable group scheme while $\alpha_p$ is not. As a consequence of this there are many integral curves of the vector field on $X$, something that provides a lot of information about the geometry of $X$.

Finally I would like to say a few words about the proof of Theorems~\ref{intro-the-1},~\ref{intro-the-2}. The main idea of the proof is to show that under certain relations between $K_X^2$ and $p$, if $X$ has a nontrivial global vector field, then a linear system on $X$, usually of the form $|mK_X|$ contains a one dimensional subsystem $|V|$ consisting of integral curves of $D$. Then, to show that every irreducible component of every member of $|V|$ is a rational curve (usually singular) which will imply that either $X$ is birationally ruled (impossible in the case of canonically polarized surfaces) or more relations between $K_X^2$ and $p$. 
In the implementation of this strategy, it is necessary to find conditions under which the vector field fixes the singular points of $X$ and lifts to the minimal resolution of $X$, something, unlike in characteristic zero, is not always true in positive characteristic.

This paper is organized as follows.

In Section~\ref{sec-1} results about the number of singularities of a canonically polarized surface and conditions under which a vector field fixes the singularities of a surface and lifts to its minimal resolution are obtained. In particular, Theorem~\ref{sec11-th-1} provides un upper bound for the singular points of a canonically polarized surface $X$ as a function of $K_X^2$ and $\chi(\mathcal{O}_X)$. The result is under the assumption that the surface has a global vector field, an admittedly strong condition but sufficient for the purposes of this paper. In characteristic zero, similar bounds have been obtained by Y. Miyaoka~\cite{M84}. However, in my knowledge, no such results existed yet in positive characteristic. In Theorem~\ref{sec11-th-2}, similar conditions are obtained which imply that a vector field fixes the singular points and lifts to the minimal resolution. In characteristic zero this is always true but not in general in positive characteristic. This is exhibited in Example~\ref{example}.

In Section~\ref{sec-2} various results related to the geometry of integral curves of a vector field  on a surface are obtained which are needed in the proofs of Theorems~\ref{intro-the-1},~\ref{intro-the-2}.

In Section~\ref{sec-3} the general method and strategy for the proof of Theorems~\ref{intro-the-1},~\ref{intro-the-2} are explicitly described. 

Sections~\ref{sec4},~\ref{sec-5},~\ref{sec-6} are devoted to the proof of the main theorems. The statements  of Theorems~\ref{intro-the-1},~\ref{intro-the-2}  is the combination of the statements of Propositions~\ref{sec4-prop}~\ref{sec5-prop},~\ref{sec6-prop}.

\section{Notation-Terminology}
Let $X$ be an integral  scheme of finite type over an algebraically closed field $k$ of characteristic $p>0$.

Let $P\in X$ be a normal surface singularity and $f \colon Y \rightarrow X$ its minimal resolution. $P\in X$ is called a canonical singularity if and only if $K_Y=f^{\ast}K_X$. Two dimensional canonical singularities are precisely the rational double points (or Du Val singularities)  which are classified by explicit equations in all characteristics by M. Artin~\cite{Ar77}.

A normal projective surface $X$ is called a  canonically polarized surface if and only if $X$ has canonical singularities and $K_X$ is ample. These surfaces are exactly the canonical models of minimal surfaces of general type. 

$\mathrm{Der}_k(X)$ denotes the space of global $k$-derivations of $X$ (or equivalently of global vector fields). It is canonically identified with $\mathrm{Hom}_X(\Omega_X,\mathcal{O}_X)$.

Let $D$ be a nontrivial global vector field on $X$. $D$ is called $p$-closed if and only if $D^p=\lambda D$, for some $\lambda\in k$. $D$ is called of additive type if $D^p=0$ and of multiplicative type if $D^p=D$.
The fixed locus of $D$ is the closed subscheme of $X$ defined by the ideal sheaf $(D(\mathcal{O}_X))$. 
The divisorial part of the fixed locus of $D$ is called the divisorial part of $D$.  A point $P\in X$ is called an isolated singularity of $D$ if and only if the ideal of $\mathcal{O}_{X,P}$ generated by $D(\mathcal{O}_{X,P})$  has an associated prime of height $\geq 2$. 

A prime divisor $Z$ of $X$ is called an integral divisor of $D$ if and only if locally there is a derivation $D^{\prime}$ of $X$ such that $D=fD^{\prime}$, $f \in k(X)$,  $D^{\prime}(I_Z)\subset I_Z$ and $D^{\prime}(\mathcal{O}_X) \not\subset I_Z$ ~\cite{RS76}.

The vector field is said to stabilize a closed subscheme $Y$ of $X$ if and only if $D(I_Y) \subset I_Y$, where $I_Y$ is the ideal sheaf of $Y$ in $X$. If $Y$ is reduced and irreducible and is not contained in the divisorial part of $D$ then $Y$ is also an integral curve of $D$.

Let $X$ be a normal surface and $D$ a nontrivial global vector field on $X$ of either additive of multiplicative type. Then $D$ induces an $\alpha_p$ or $\mu_p$ action on $X$. Let $\pi \colon X\rightarrow Y$ be the quotient of $X$ by this action~\cite[Theorem 1, Page 104]{Mu70}. Let $C\subset X$ be a reduced and irreducible curve and $\tilde{C}=\pi(C)$. Suppose that $C$ is an integral curve of $D$. Then $\pi^{\ast}\tilde{C}=C$. Suppose that $C$ is not an integral curve of $D$. Then $\pi^{\ast}\tilde{C}=pC$~\cite{RS76}.

For any prime number $l\not= p$, the cohomology groups $H_{et}^i(X,\mathbb{Q}_l)$ are independent of $l$, they are finite dimensional of $\mathbb{Q}_l$ and are called the $l$-adic cohomology groups of $X$. The $i$-Betti number $b_i(X)$ of $X$ is defined to be the dimension of $H_{et}^i(X,\mathbb{Q}_l)$. It is well known that $b_i(X)=0$ for any $i>2n$, where $n=\dim X$~\cite[Chapter VI, Theorem 1.1]{Mi80}. 

$X$ is called  simply connected if $\pi_1(X)=\{1\}$, where $\pi_1(X)$ is the algebraic fundamental group of $X$.

Let $\mathcal{F}$ be a coherent sheaf on $X$. By $\mathcal{F}^{[n]}$ we denote the double dual $(\mathcal{F}^{\otimes n})^{\ast\ast}$.

\section{Singular points of surfaces with vector fields.}\label{sec-1}
Let $X$ be a normal projective surface defined over an algebraically closed field $k$ of characteristic $p>0$ whose singularities are rational double points. Suppose that $X$ has a nontrivial global vector field $D$. This section has two main oblectives. The first objective is  to obtain an upper bound, as a function of numerical invariants of $X$, of the number of singular points of $X$. The second objective is to find conditions which imply that the singular points of $X$ are fixed points of the vector field $D$ and that $D$ lifts to the minimal resolution of $X$.

If the base field has characteristic zero, then an upper bound of the number of singular points of $X$ was obtained by Y. Miyaoka~\cite{M84}. The proof of that result uses, among other characteristic zero techniques,  the Bogomolov-Miyaoka-Yau inequality which fails in positive characteristic. In this section, a result in that spirit is given under the assumption that $X$ has a nontrivial global vector field. This is a strong restriction on $X$, but it suffices for the purpose of this paper.

In characteristic zero, a vector field fixes the singularities and lifts to the minimal resolution~\cite{BW74}. However, this does not hold in general in positive characteristic. In fact something more interesting happens. 
There exist smooth minimal surfaces of general type without nontrivial global vector fields (and hence reduced automorphism scheme) whose canonical model has nontrivial global vector fields and therefore non reduced automorphism scheme. This is a situation that complicates the structure of the moduli of surfaces of general type in positive characteristic. The next example exhibits exactly such a case.

\begin{example}\label{example}
Let $k$ be an algebraically closed field of characteristic $2$ and $X \subset \mathbb{P}^3_k$ be the quintic given by
\[
x_1x_2(x_1^{3}+x_2^{3}+x_3^{3})+x_3^3x_4^2+x_3x_4^{4}=0.
\]
I will show the following:
\begin{enumerate}
\item The singularities of $X$ are rational double points of type $A_1$ ( i.e.,  locally isomorphic to $xy+z^2=0$) and $K_X$ is ample.
\item $X$ has nontrivial global vector fields and hence the automorphism scheme $\mathrm{Aut}(X)$ is a non reduced zero dimensional scheme. 
\item The vector fields of $X$ do not fix all the singular points of $X$ and therefore they do not lift to the minimal resolution of $X$.
\item The minimal resolution of $X$ is a smooth minimal surface of general type without vector fields and therefore with reduced automorphism scheme $\mathrm{Aut}(X^{\prime})$. 
\end{enumerate}
I proceed to show the above properties.

$X$ is a quintic in $\mathbb{P}^3$ and hence by the standard adjunction formula, $\mathcal{O}_X(K_X)=\mathcal{O}_X(1)$ and hence it is very ample.

The equation of $X$ is invariant under the graded derivation $D=x_3\frac{\partial}{\partial x_4}$ of $k[x_1,x_2,x_3,x_4]$, which therefore induces a nonzero global vector field on $X$ such that $D^2=0$. Hence $X$ has nontrivial global vector fields.

The singularities of $X$ can be checked locally. In the affine chart given by $x_3=1$, $X$ is given by the equation 
\[
x_1x_2+x_1^4x_2+x_1x_2^4+x_4^2+x_4^4=0
\]
in $k[x_1,x_2,x_4]$. The singular points of $X$ are those with $x_i^4+x_i=0$, $i=1,2$ and $x_4^4+x_4^2+x_1x_2=0$. A straightforward calculation shows that the degree two term of the polynomial  defining $X$ at every singular point is an irreducible quadric in $x_1$, $x_2$ and $x_4$ and hence the singularities of $X$ are ordinary double points given locally analytically by $xy+z^2=0$. Similarly one can easily check that there are no more singularities in the other charts. Hence (1) is proved.

In  this chart the vector field $D$ is given by  $D=\frac{\partial}{\partial x_4}$. Hence $D$ has no fixed points in the open set $x_3=1$. In particular,  none of the singular points is fixed by $D$.

Since $K_X$ is ample and $X$ has rational double points, $\mathrm{Aut}(X)$ is zero dimensional. Then since its tangent space is $\mathrm{Hom}(\Omega_X,\mathcal{O}_X)\not=0$, the space of global derivations, $\mathrm{Aut}(X)$ is not reduced. Hence (2) is proved.

Let now $f \colon X^{\prime}\rightarrow X$ be the minimal resolution of $X$. Then $X^{\prime}$ is simply the blow up of the singular points of $X$. Since $X$ has rational double points, $K_{X^{\prime}}=f^{\ast}K_X$ and therefore $X^{\prime}$ is a minimal surface of general type.

Since $f$ is the blow up of the singular points of $X$, a vector field on $X$ lifts to a vector field on $X^{\prime}$ if and only if it fixes the singular points of $X$. In addition, every vector field on $X^{\prime}$ induces a vector field on $X$ by the natural map $f_{\ast}T_{X^{\prime}} \rightarrow T_X$.  Therefore, in order to show that $X^{\prime}$ has no non trivial global vector fields, it suffices to show that there is no non trivial global vector field on $X$ which fixes every singular point of $X$. This will be done by explicitly calculating the vector fields of $X$.

\textbf{Claim:} A vector field on $X$ is the restriction on $X$ of a vector field on $\mathbb{P}^3$ which fixes $X$. 

Dualizing  the exact sequence
\[
0 \rightarrow \mathcal{O}_X(-5) \rightarrow \Omega_{\mathbb{P}^3} \otimes \mathcal{O}_X \rightarrow \Omega_X \rightarrow 0
\]
we get the exact sequence
\[
0 \rightarrow \mathrm{Hom}(\Omega_X,\mathcal{O}_X) \rightarrow \mathrm{Hom}(\Omega_{\mathbb{P}^3},\mathcal{O}_X) \rightarrow \mathrm{Hom}(\mathcal{O}_X(-5),\mathcal{O}_X).
\]
Moreover, there exists a natural exact sequence
\begin{gather*}
0 \rightarrow \mathrm{Hom}(\Omega_{\mathbb{P}^3}, \mathcal{O}_{\mathbb{P}^3}(-5) ) \rightarrow \mathrm{Hom}(\Omega_{\mathbb{P}^3},\mathcal{O}_{\mathbb{P}^3}) 
 \stackrel{\sigma}{\rightarrow} \mathrm{Hom}(\Omega_{\mathbb{P}^3},\mathcal{O}_{X}) \rightarrow \mathrm{Ext}^1((\Omega_{\mathbb{P}^3}, \mathcal{O}_{\mathbb{P}^3}(-5) ).
 \end{gather*}
Now $\mathrm{Ext}^1((\Omega_{\mathbb{P}^3}, \mathcal{O}_{\mathbb{P}^3}(-5) )=H^1(T_{\mathbb{P}^3}(-5))=0$, by using the standard exact sequence for the tangent sheaf on $\mathbb{P}^3$ and the cohomology of $\mathbb{P}^3$. Hence the map $\sigma$ is surjective and therefore every global vector field on $X$ is induced by a vector field on $\mathbb{P}^3$, and the claim is proved.

Now $h^0(T_{\mathbb{P}^3})=15$ and the global vector fields on $\mathbb{P}^3$ are induced by the following graded vector fields of $k[x_1,x_2,x_3,x_4]$. $D_1=x_1\frac{\partial}{\partial x_1}$, 
$D_2=x_2\frac{\partial}{\partial x_2}$, $D_3=x_4\frac{\partial}{\partial x_4}$, $D_4=x_1\frac{\partial}{\partial x_2}$, $D_5= x_1\frac{\partial}{\partial x_3}$, $D_6=x_1\frac{\partial}{\partial x_4}$, 
$D_7=x_2\frac{\partial}{\partial x_1}$, $D_8=x_2\frac{\partial}{\partial x_3}$, $D_9=x_2\frac{\partial}{\partial x_4}$, $D_{10}=x_3\frac{\partial}{\partial x_1}$, $D_{11}=x_3\frac{\partial}{\partial x_2}$, 
$D_{12}=x_3\frac{\partial}{\partial x_4}$, $D_{13}=x_4\frac{\partial}{\partial x_1}$, $D_{14}=x_4\frac{\partial}{\partial x_2}$, $D_{15}=x_4\frac{\partial}{\partial x_3}$. In the calculation of the vector fields of $\mathbb{P}^3$ it was taken into consideration that  the graded derivation $\sum_{i=1}^4x_i \frac{\partial}{\partial x_i}$ gives the zero vector field of $\mathbb{P}^3$.

In the affine chart $x_3=1$, these derivations are given by the following derivations of $k[x_1,x_2,x_4]$.  $D_1=x_1\frac{\partial}{\partial x_1}$, 
$D_2=x_2\frac{\partial}{\partial x_2}$, $D_3=x_4\frac{\partial}{\partial x_4}$, $D_4=x_1\frac{\partial}{\partial x_2}$, $D_5=x^2_1\frac{\partial}{\partial x_1}+ x_1x_2\frac{\partial}{\partial x_2}
+x_1x_4\frac{\partial}{\partial x_4}$, $D_6=x_1\frac{\partial}{\partial x_4}$, 
$D_7=x_2\frac{\partial}{\partial x_1}$, $D_8=x_1x_2\frac{\partial}{\partial x_1}+x^2_2\frac{\partial}{\partial x_2}+x_2x_4\frac{\partial}{\partial x_4}$, 
$D_9=x_2\frac{\partial}{\partial x_4}$, $D_{10}=\frac{\partial}{\partial x_1}$, $D_{11}=\frac{\partial}{\partial x_2}$, 
$D_{12}=\frac{\partial}{\partial x_4}$, $D_{13}=x_4\frac{\partial}{\partial x_1}$, $D_{14}=x_4\frac{\partial}{\partial x_2}$, $D_{15}=x_1x_4\frac{\partial}{\partial x_1}+x_2x_4\frac{\partial}{\partial x_2}+
x_4^2\frac{\partial}{\partial x_4}$. 

Let now $D=\sum_{i=1}^{15}\lambda_i D_i$ a derivation, $\lambda_i \in k$, $i=1,\ldots, 15$. The points $(0,0,0), (1,0,0),(0,1,0),(1,0,1),(0,1,1)$ are singular points of $X$ corresponding to the ideals $(x_1,x_2,x_4), (x_1+1,x_2,x_4), (x_1,x_2+1,x_4),(x_1+1,x_2,x_4+1),(x_1,x_2+1,x_4+1)$. A straightforward but a bit long calculation shows that the only derivation fixing these ideals is 
\begin{gather*}
D=\lambda(D_1+D_2+D_3+D_5+D_8)=\\
\lambda((x_1+x_1^2+x_1x_2)\frac{\partial}{\partial x_1}+(x_2+x_2^2+x_1x_2)\frac{\partial}{\partial x_2}+(x_4+x_1x_4+x_2x_4)\frac{\partial}{\partial x_4}).
\end{gather*}
However, this derivation does not fix the ideal $(x_1+1,x_2+1,x_4+a)$, where $a^2+a+1=0$,  corresponding to the singular point $(1,1,a)$, neither  the equation of $X$. Hence $X$ does not have any nontrivial global vector fields fixing all its singular points and therefore its minimal resolution has no non trivial vector fields and hence it has reduced automorphism scheme.
\end{example}

The main results of this section are the following two theorems. The first one gives an upper bound for the number of singularities of a canonically polarized surface $X$. The next one provides a condition under which a vector field fixes the singular points and lifts to the minimal resolution.

\begin{theorem}\label{sec11-th-1}
Let $X$ be a canonically polarized surface defined over a field of characteristic $p>0$. Suppose that $p$ does not divide $K_X^2$ and $X$ has a nontrivial global vector field. Let $f\colon X^{\prime}\rightarrow X$ be the minimal resolution of $X$. Let $\nu(P)$ be the number of $f$-exceptional curves over a point $P\in X$. Then
\begin{enumerate}
\item Suppose that $K_X^2=1$ and $p\not=2$. Then $\sum_{P\in X}\nu(P) \leq 55$.
\item Suppose that $K_X^2\geq 2$ and $p\not= 3$. Then
\[
\sum_{P\in X}\nu(P) \leq 12\chi(\mathcal{O}_X)+11K_X^2,
\]
\end{enumerate}
In particular, $X$ has at most $55$ singular points if $K_X^2=1$ and $12\chi(\mathcal{O}_X)+11K_X^2$ singular points if $K_X^2\geq 2$.
\end{theorem}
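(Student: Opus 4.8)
The plan is to pass to the minimal resolution $f\colon X'\to X$ and translate the quantity $\sum_P\nu(P)$ into Euler numbers, thereby reducing the entire statement to a single lower bound on the $\ell$-adic Euler number $e(X)$, which the vector field will supply in place of the Bogomolov--Miyaoka--Yau inequality used by Miyaoka in characteristic zero.

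First I would collect the resolution data. Because rational double points are rational singularities, $R^if_\ast\mathcal{O}_{X'}=0$ for $i>0$ and so $\chi(\mathcal{O}_{X'})=\chi(\mathcal{O}_X)$; because they are canonical, $K_{X'}=f^\ast K_X$, whence $K_{X'}^2=K_X^2$ and $X'$ is a smooth minimal surface of general type. Noether's formula on $X'$ then gives $c_2(X')=12\chi(\mathcal{O}_X)-K_X^2$. Using additivity of the $\ell$-adic Euler characteristic together with the fact that the exceptional fibre over a rational double point is a tree of $\nu(P)$ smooth rational curves, of Euler number $\nu(P)+1$, contraction yields
\[
c_2(X')=e(X)+\sum_{P\in X}\nu(P),
\]
so that $\sum_P\nu(P)=12\chi(\mathcal{O}_X)-K_X^2-e(X)$. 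A direct comparison shows that the inequality in case (2) is equivalent to the clean estimate $e(X)\ge -12K_X^2$, while case (1) reduces similarly once the finitely many possible values of $\chi(\mathcal{O}_X)$ for $K_X^2=1$ are pinned down by Noether's inequality.

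The core of the argument is thus the lower bound on $e(X)$, and here the vector field is essential. After reducing to a $p$-closed $D$ of additive or multiplicative type via~\cite{Tz17b},~\cite{RS76}, the basic tool is the length formula for the isolated zero scheme of a $p$-closed vector field $\tilde{D}$ on a smooth surface: if $\tilde{D}$ has divisorial part $\Delta$, then this scheme has nonnegative length $c_2+K\cdot\Delta+\Delta^2$. The plan is to lift $D$ to a twisted vector field on $X'$, to match the isolated zeros lying over the singular points against the numbers $\nu(P)$, and to bound the divisorial part $\Delta$ in terms of $K_X$ by means of a base-point-free pluricanonical system --- $|4K_X|$ when $K_X^2=1$ and $|3K_X|$ when $K_X^2\ge 2$, which is the source of the exclusions $p\ne 2$ and $p\ne 3$. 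The hypothesis $p\nmid K_X^2$ enters to force a general pluricanonical divisor to be an integral divisor of $D$, i.e.\ not to be divisible by $p$ under the quotient $\pi\colon X\to X/D$, and this is what bounds $\Delta\cdot K_X$; combined with the Hodge index theorem it produces the explicit constants $55$ and $12\chi(\mathcal{O}_X)+11K_X^2$.

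The main obstacle, I expect, is exactly this control of the divisorial part together with the lifting behaviour of $D$: as Example~\ref{example} shows, in positive characteristic $D$ need not fix the singular points or lift to $X'$, so the smooth zero-count formula cannot be applied on $X'$ naively. The device I would use to circumvent this is the purely inseparable quotient $\pi\colon X\to X/D$, which is a universal homeomorphism and hence preserves the Euler number, $e(X)=e(X/D)$; one then bounds $e(X/D)$ from below by analysing the singularities that $D$ imposes on the quotient. Turning this global length estimate into the stated numerical inequalities, while carrying the additive and multiplicative cases in parallel and keeping track of how the divisorial part meets the exceptional configuration, is the delicate part of the proof.
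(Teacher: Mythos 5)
Your reduction is numerically consistent with the paper: $e(X')=e(X)+\sum_P\nu(P)$, Noether's formula, and the target bound $K_X\cdot\Delta+\Delta^2\leq 12K_X^2$ (obtained from a base-point-free pluricanonical system, genus change under the inseparable quotient, and Hodge index, exactly as in Proposition~\ref{sec11-prop-5}) reproduce the constants $12\chi(\mathcal{O}_X)+11K_X^2$ and $55$. The gap is at the central step, the lower bound on $e(X)$, and it is a real one. Your primary device is to lift $D$ to $X'$, apply the zero-scheme length formula $\mathrm{len}(Z)=c_2(X')+K_{X'}\cdot\Delta'+(\Delta')^2$ there, and match at least $\nu(P)$ units of that length against the exceptional locus over each $P$. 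But under the hypotheses of this theorem the lift need not exist: Example~\ref{example} is precisely a canonically polarized surface whose vector field fixes none of its singular points and does not lift to $X'$. The paper only establishes lifting later (Theorem~\ref{sec11-th-2}), under a lower bound on $p$ that is itself \emph{deduced from} the present theorem, so invoking it here would be circular. Even granting the lift, the matching step is not routine: an exceptional curve may lie in the divisorial part of $D'$ (contributing to $\Delta'$, which then no longer satisfies the bounds proved for $\Delta$ on $X$, rather than to isolated zeros), and for an additive $D'$ the induced field on a chain of $\nu$ rational curves can concentrate its zeros so that the count of distinct fixed points drops well below $\nu$. Your fallback --- $e(X)=e(X/D)$ and a lower bound on $e(X/D)$ --- is also not carried out; the quotient has singularities far worse than rational double points, and bounding its Euler number from below leads back to the same problem of controlling exceptional fibres of a resolution.

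The paper sidesteps all of this by never touching the vector field near the singular points. It bounds $\chi(T_X)$ on the singular surface directly, using the four-term sequence of Proposition~\ref{sec11-prop-1} (which needs only that $D$ is $p$-closed with divisorial part $\Delta$) together with the Riemann--Roch inequality of Proposition~\ref{sec11-prop-2}; it then extracts $\sum_P\nu(P)$ cohomologically from the Leray spectral sequence for $f$, via the purely local deformation-theoretic estimate $h^1(T_{X'})\geq\nu(P)$ over a rational double point (Proposition~\ref{sec11-prop-3}, a Burns--Wahl-type bound), and compares with $\chi(T_{X'})=-10\chi(\mathcal{O}_{X'})+2K_X^2$. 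If you want to pursue your route, you would need an argument for the lower bound on $e(X)$ that does not presuppose that $D$ fixes the singular points or lifts to $X'$; as written, the proposal does not supply one.
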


\begin{theorem}\label{sec11-th-2}
With assumptions as in Theorem~\ref{sec11-th-1}. Suppose also  that $p>5$ and 
\begin{enumerate}
\item $p>56$, if $K_X^2=1$,
\item $p>12\chi(\mathcal{O}_X)+11K_X^2 +1$,  if $K_X^2\geq2$.
\end{enumerate}
 Then
\begin{enumerate}
\item Every singular point of $X$ is a fixed point of $D$.
\item $D$ lifts to a vector field $D^{\prime}$ on the minimal resolution $X^{\prime}$ of $X$.
\item Every $f$-exceptional curve is an integral curve of $D^{\prime}$.
\end{enumerate}
\end{theorem}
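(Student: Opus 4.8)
The plan is to prove the three assertions in the order (1) $\Rightarrow$ (2) $\Rightarrow$ (3), with (1) carrying essentially all of the weight. First I would reduce to the case that $D$ is $p$-closed of additive or multiplicative type, so that $D$ integrates to an action of $G=\alpha_p$ or $\mu_p$ on $X$ with purely inseparable quotient $\pi\colon X\rightarrow Y$ of degree $p$; this is the form in which a non-reduced $\mathrm{Aut}(X)$ produces a vector field (\cite{Tz17b},\cite{RS76}), and it is the only case needed in the sequel. At the outset I would record the elementary point that $D$ preserves the intrinsically defined singular locus $\Sigma=\mathrm{Sing}(X)$, and that each singular point is preserved \emph{set-theoretically} since an infinitesimal automorphism acts trivially on the underlying topological space. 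What is genuinely at stake in (1) is therefore the stronger scheme-theoretic statement that $D$ vanishes at each point of $\Sigma$; Example~\ref{example} shows this can fail, and the hypotheses on $p$ are exactly what rules out such failure.

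For (1) I would argue by contradiction. Suppose $D$ does not vanish at some singular point $P$. Then $P$ is a regular point of the foliation, the $G$-action is free near $P$, and $\pi$ is a $G$-torsor there, so the rational double point $P\in X$ sits as a purely inseparable degree-$p$ cover of the image singularity $\pi(P)\in Y$. The aim is to show that the presence of even one such non-fixed singular point forces a nonnegative integer invariant controlled by Theorem~\ref{sec11-th-1} — for instance $\sum_{P}\nu(P)$, or equivalently $c_2(X')=12\chi(\mathcal{O}_X)-K_X^2$ read through a $\mu_p$-localization — to be a \emph{positive multiple of $p$}, while Theorem~\ref{sec11-th-1} bounds that same invariant strictly below $p$ (by $55$ when $K_X^2=1$ and by $12\chi(\mathcal{O}_X)+11K_X^2$ when $K_X^2\ge 2$, both admissible since $p>5$ and $p\nmid K_X^2$). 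The factor of $p$ should be extracted from the dichotomy recalled in the Notation section, $\pi^{\ast}\tilde C=C$ for integral curves versus $\pi^{\ast}\tilde C=pC$ otherwise, combined with the relation between $K_X$, $\pi^{\ast}K_Y$ and the divisorial part of $D$. I expect this step to be the main obstacle: pinning down precisely how a non-fixed rational double point injects the factor $p$ into the resolution numerology, and doing so uniformly for both $\mu_p$ (diagonalizable, with many integral curves) and $\alpha_p$ (non-diagonalizable, where integral curves are scarce and the localization arguments are more delicate).

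Granting (1), I would obtain (2) by a local analysis along the resolution. Since $D$ vanishes at $P$, a direct computation in the blow-up charts shows that $f^{[\ast]}D$ remains regular after the first blow-up and restricts to a vector field $D_1$ on the exceptional $\mathbb{P}^1$. The centres of the successive blow-ups in the minimal resolution of a rational double point are intrinsic points and hence are preserved by the infinitesimal automorphism $D_1$; using $p>5$, so that the rational double points and the induced degree $\le 2$ fields on the exceptional $\mathbb{P}^1$'s behave as in characteristic zero, I would upgrade ``preserved'' to actual vanishing of the successive lifts at each centre and then iterate through the ADE resolution to conclude that $f^{[\ast]}D=D'$ is regular on all of $X'$. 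The conditions $p>5$ and $p\nmid K_X^2$ are what keep this ADE-local bookkeeping free of the wild phenomena that occur in small characteristic.

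Finally, for (3), the exceptional locus $E=\bigcup_i E_i$ is intrinsic and therefore $D'$-invariant; for $p>5$ no two components can be interchanged by an infinitesimal automorphism, so each $(-2)$-curve $E_i$ is individually stabilized by $D'$. By the correspondence between stabilized prime divisors and integral divisors recalled in the Notation section, each $E_i$ is then an integral curve of $D'$, the only point requiring attention being that $E_i$ not lie in the divisorial part of $D'$; I would dispose of this by factoring out the divisorial part locally and using the fixed-scheme count established in (1) to see that the remaining field does not vanish identically along $E_i$. This yields (3) and completes the proof.
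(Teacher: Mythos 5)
There is a genuine gap, and it sits exactly where you predicted it would: the heart of assertion (1). Your plan is to derive a contradiction from a non-fixed singular point by showing that such a point forces $\sum_P\nu(P)$ (or $c_2(X')$) to be a positive multiple of $p$, against the bound of Theorem~\ref{sec11-th-1}. You never establish this implication — you explicitly defer it as ``the main obstacle'' — and it is not at all clear how to extract the factor of $p$ you need. The dichotomy $\pi^{\ast}\tilde C=C$ versus $\pi^{\ast}\tilde C=pC$ concerns curves, not isolated singular points, and there is no localization formula in the paper (or an obvious one to hand) converting a single non-fixed rational double point into a contribution divisible by $p$ to the resolution numerology. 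So as written the proof of (1) is a strategy statement, not a proof. The paper's actual argument is entirely different and much more elementary: it uses the numerical hypothesis only to ensure, via Theorem~\ref{sec11-th-1}, that the ADE indices are small relative to $p$ (e.g.\ $n+1<p$ for an $A_n$ point, since $n\leq\sum_P\nu(P)\leq 12\chi(\mathcal{O}_X)+11K_X^2<p-1$), and then works in the completed local ring with the explicit equation: from $D(xy+z^{n+1})=xDy+yDx+(n+1)z^nDz\in(xy+z^{n+1})$ and the invertibility of $n+1$ in $k$ one reads off directly that $Dx,Dy,Dz\in(x,y,z)$, i.e.\ $P$ is a fixed point. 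Nothing global, no quotient, no torsor analysis. (A torsor-type argument in the spirit of what you propose might be salvageable — a non-fixed point maps to a smooth point of $Y$ by~\cite{AA86}, so the germ would be of the form $t^p=f(x,y)$, which constrains the singularity type heavily — but that is a different proof that you would still have to write out.)

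Parts (2) and (3) are closer to the paper, but two of your shortcuts also need repair. For (2), ``the centres of successive blow-ups are intrinsic and hence preserved'' only gives set-theoretic preservation; the paper instead observes that the blown-up surface again has rational double points of smaller index and re-runs the local computation of (1) at each stage to get scheme-theoretic vanishing, which is what is needed to lift again. For (3), ``the exceptional locus is intrinsic, hence $D'$-invariant'' gives you at best that $D'$ stabilizes the scheme-theoretic fibre $f^{-1}(P)$, whose cycle may carry multiplicities (it is $2E$ for the $D_m$ and $E_k$ types after one blow-up); to pass from the stabilized cycle $\sum m_iZ_i$ to the individual reduced components you need the multiplicities to be prime to $p$, which is exactly the content of Lemma~\ref{sec11-lemma-1} and uses $p\neq 2$. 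Your appeal to ``no two components can be interchanged by an infinitesimal automorphism'' does not substitute for this, since the issue is not permutation of components but divisibility of their coefficients in the fibre cycle.
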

\begin{remark}
The proof of the theorem uses Proposition~\ref{sec11-prop-5} which requires $p\not=3$ and the classification of rational double points in positive characteristic which requires $p>5$~\cite{Ar77}. In characteristic zero $\chi(\mathcal{O}_X)>0$ for any surface of general type and hence always 
$12\chi(\mathcal{O}_X)+11K_X^2 +1>24$. However in positive characteristic it is not known at the moment of this writing if $\chi(\mathcal{O}_X)>0$ for all $X$ and so it is possible that $12\chi(\mathcal{O}_X)+11K_X^2 +1$ may be 5 or less so $p=3,5$ must be excluded in the second case of the previous theorem, when $K_X^2 \geq 2$.
\end{remark}

Taking into consideration the classification of rational double points in positive characteristic~\cite{Ar77}, it immediately follows from Theorem~\ref{sec11-th-1} that

\begin{corollary}\label{sec11-cor-1}
With assumptions as in Theorem~\ref{sec11-th-1}. Suppose that the singular locus of $X$ consists of the points $A^{\ast}_i$ of type $A_{n_i}$, $i=1,\ldots, r$, $D^{\ast}_j$ of type $D_{m_j}$, $j=1,\ldots, s$, 
$E^{\ast}_{6,k}$ of type $E_6$, $k=1,\ldots , t$, $E^{\ast}_{7,\nu}$ of type $E_7$, $\nu=1,\ldots, w$ and $E^{\ast}_{8,\mu}$ of type $E_8$, $\mu=1,\ldots, u$. Then
\[
\sum_{i=1}^rn_i + \sum_{j=1}^s m_j +6t+7w+8u\leq 12\chi(\mathcal{O}_X)+11K_X^2.
\]
\end{corollary}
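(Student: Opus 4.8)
The plan is to reduce the statement to a purely combinatorial identity by means of the classification of rational double points and then to invoke Theorem~\ref{sec11-th-1} directly; no genuinely new geometry is required, so I expect this to be a short bookkeeping argument rather than one with a hard step.

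First I would recall the resolution theory of Du Val singularities, valid in every characteristic by Artin's classification~\cite{Ar77}. If $P\in X$ is a rational double point and $f\colon X^{\prime}\rightarrow X$ its minimal resolution, then the reduced exceptional fibre $f^{-1}(P)$ is a connected configuration of smooth rational $(-2)$-curves whose dual graph is precisely the Dynkin diagram naming the singularity type. Consequently the number $\nu(P)$ of $f$-exceptional curves lying over $P$ equals the number of vertices of that diagram, that is, the rank of the associated root system: $\nu(P)=n$ if $P$ is of type $A_n$, $\nu(P)=m$ if $P$ is of type $D_m$, and $\nu(P)=6,7,8$ if $P$ is of type $E_6,E_7,E_8$ respectively.

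Next I would sum this identity over the singular locus. Since each exceptional curve lies over exactly one singular point, the fibres over distinct points being disjoint, writing the singularities as in the statement gives
\[
\sum_{P\in X}\nu(P)=\sum_{i=1}^{r}n_i+\sum_{j=1}^{s}m_j+6t+7w+8u.
\]
The inequality to be proved is then exactly the assertion that this sum is at most $12\chi(\mathcal{O}_X)+11K_X^2$, which is the content of Theorem~\ref{sec11-th-1}(2). As the hypotheses of the corollary are by assumption those of that theorem, the conclusion follows at once.

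The only point deserving any care is the admissible range of $p$ and $K_X^2$: the displayed bound is the one established in the case $K_X^2\geq 2$ (which requires $p\not=3$), whereas for $K_X^2=1$ Theorem~\ref{sec11-th-1}(1) yields instead the uniform estimate $\sum_{P}\nu(P)\leq 55$. I would therefore read the corollary as the $K_X^2\geq 2$ assertion, using the $K_X^2=1$ case of the theorem separately when that value is relevant. There is no real obstacle here: the entire force of the statement is carried by Theorem~\ref{sec11-th-1}, and the corollary merely repackages its conclusion through the subscript-to-rank dictionary furnished by the \emph{ADE} classification.
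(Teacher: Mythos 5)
Your argument is correct and is exactly the one the paper intends: the corollary is stated as an immediate consequence of Theorem~\ref{sec11-th-1} via the observation that $\nu(P)$ equals the rank of the Dynkin diagram attached to each singularity type by Artin's classification. Your remark about reading the displayed bound as the $K_X^2\geq 2$ case (with the separate estimate $55$ when $K_X^2=1$) is a sensible clarification but does not change the substance.
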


The proofs of Theorems~\ref{sec11-th-1},~\ref{sec11-th-2} will be given at the end of this section.

The next proposition is a simple generalization to the case of singular surfaces of a well known result on vector fields on smooth surfaces.
\begin{proposition}\label{sec11-prop-1}
Let $X$ be a Gorenstein normal projective surface and $D$ a nontrivial global vector field on $X$ such that $D^p=0$ or $D^p=D$. Let $\Delta$ be the divisorial part of $D$. Then there exists an exact sequence
\[
0 \rightarrow \mathcal{O}_X(\Delta)\rightarrow T_X \rightarrow \omega^{-1}(-\Delta) \rightarrow \mathcal{F} \rightarrow 0,
\]
where $\mathcal{F}$ is a zero dimensional coherent sheaf whose support is contained in the union of the singular points of $X$ and the isolated singularities of $D$.
\end{proposition}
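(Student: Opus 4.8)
The plan is to build the sequence first on the smooth locus, where $T_X$ is a rank two bundle and a Koszul computation applies, and then to extend it across the finitely many singular points by reflexivity. Write $U=X\setminus\Sigma$, where $\Sigma$ is the singular locus of $X$, and let $j\colon U\hookrightarrow X$ be the inclusion. Since $X$ is a normal surface, $\Sigma$ is finite and $\mathrm{codim}_X\Sigma=2$. Because $X$ is Gorenstein, $\omega_X$ is invertible, so each of $\mathcal{O}_X(\Delta)$, $T_X=\mathcal{H}om(\Omega_X,\mathcal{O}_X)$ and $\omega^{-1}(-\Delta)=\omega_X^{-1}\otimes\mathcal{O}_X(-\Delta)$ is reflexive, and hence equals the pushforward $j_\ast$ of its restriction to $U$. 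This is the mechanism that will let me recover the global statement on $X$ from the corresponding statement on $U$.

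On $U$ the surface is smooth and $T_U$ is locally free of rank two with $\det T_U=\omega_U^{-1}$. By the definition of the divisorial part, $D|_U$ vanishes in codimension one exactly along $\Delta$, so dividing out a local equation of $\Delta$ turns $D|_U$ into a global section $\tilde D$ of the rank two bundle $E:=T_U\otimes\mathcal{O}_U(-\Delta)$ whose zero scheme $Z$ is zero dimensional; $Z$ is precisely the set of isolated singularities of $D$ lying in $U$, in the sense recalled above. Since $Z$ is the zero scheme of a section of a rank two bundle on a smooth surface, it is a local complete intersection and the Koszul complex of $\tilde D$ gives
\[
0 \to \mathcal{O}_U \xrightarrow{\tilde D} E \to \mathcal{I}_Z\otimes\det E \to 0 .
\]
Tensoring with $\mathcal{O}_U(\Delta)$ and using $\det E=\omega_U^{-1}(-2\Delta)$ yields
\[
0 \to \mathcal{O}_U(\Delta)\xrightarrow{\alpha_U} T_U \xrightarrow{\beta_U} \mathcal{I}_Z\otimes\omega^{-1}(-\Delta)\to 0 ,
\]
and composing $\beta_U$ with the inclusion $\mathcal{I}_Z\otimes\omega^{-1}(-\Delta)\hookrightarrow\omega^{-1}(-\Delta)$ produces the four-term exact sequence on $U$ with cokernel the zero-dimensional sheaf $\mathcal{F}_U=\mathcal{O}_Z\otimes\omega^{-1}(-\Delta)$.

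It remains to extend this across $\Sigma$. The maps $\alpha_U,\beta_U$ lie in $\mathrm{Hom}$-groups whose target sheaves are reflexive; since $\mathcal{H}om$ into a reflexive sheaf is reflexive and $\mathrm{codim}_X\Sigma=2$, restriction gives $\mathrm{Hom}_X(\mathcal{O}_X(\Delta),T_X)=\mathrm{Hom}_U(\mathcal{O}_U(\Delta),T_U)$ and likewise for $\beta$, so $\alpha_U,\beta_U$ extend uniquely to $\alpha\colon\mathcal{O}_X(\Delta)\to T_X$ and $\beta\colon T_X\to\omega^{-1}(-\Delta)$ on $X$. The relation $\beta\circ\alpha=0$ holds on $U$ and, since $\mathrm{Hom}_X(\mathcal{O}_X(\Delta),\omega^{-1}(-\Delta))$ is torsion free, holds on $X$; and $\alpha$ is injective because it is so on the dense open $U$ and $\mathcal{O}_X(\Delta)$ is torsion free. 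For exactness in the middle, $\ker\beta$ is the kernel of a map from a reflexive sheaf to a torsion-free sheaf, hence reflexive, and it agrees with $\mathrm{im}\,\alpha_U=\mathcal{O}_U(\Delta)$ on $U$; two reflexive sheaves agreeing off a codimension-two set coincide, so $\ker\beta=\mathrm{im}\,\alpha$. Finally $\beta$ has rank one image, so $\mathcal{F}=\mathrm{coker}\,\beta$ is supported on the finite set $Z\cup\Sigma$, the union of the isolated singularities of $D$ and the singular points of $X$, and is therefore zero dimensional. The heart of the argument, and its only delicate point, is the passage across $\Sigma$: there $T_X$ is not locally free and the Koszul computation is unavailable, so every exactness assertion must be recovered from the smooth-locus sequence through reflexivity and codimension-two extension, which is exactly where the Gorenstein hypothesis is used (invertibility of $\omega_X$ makes $\omega^{-1}(-\Delta)$ reflexive of rank one). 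I note that the construction uses only $D\neq0$; the hypothesis $D^p=0$ or $D^p=D$ is the standing assumption of the paper and is not needed here.
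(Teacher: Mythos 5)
Your proof is correct, and while the overall architecture matches the paper's (establish the foliation sequence on an open locus, then extend across a finite set using reflexivity and the invertibility of $\omega_X$), the mechanism on the open part is genuinely different. The paper excises \emph{both} the singular points of $X$ and the isolated singularities of $D$, so that on the resulting open set $D$ has only divisorial singularities; it then invokes the smoothness of the quotient by the induced $\alpha_p$ or $\mu_p$ action (Rudakov--Shafarevich) together with \cite[Proposition 1.9.3]{MP97} to get a short exact sequence with invertible quotient $\omega^{-1}(-\Delta)$, and obtains the four-term sequence and the sheaf $\mathcal{F}$ only afterwards, as the failure of surjectivity under $i_\ast$. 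You instead remove only $\mathrm{Sing}(X)$, keep the isolated singularities of $D$ in play, and produce the four-term sequence directly from the Koszul complex of the section $\tilde D$ of $T_U(-\Delta)$. This buys two things: an explicit local description $\mathcal{F}|_{U}\cong\mathcal{O}_Z\otimes\omega^{-1}(-\Delta)$ at the isolated singularities of $D$ away from $\mathrm{Sing}(X)$, which the paper's argument does not record; and, as you correctly note, independence from the hypothesis $D^p=0$ or $D^p=D$, which the paper's route does implicitly use (the smoothness of the quotient requires the $p$-closed action). Conversely, your version has to work harder at the extension step --- the exactness in the middle and the identification of $\ker\beta$ must be recovered from reflexivity and agreement in codimension two --- whereas the paper's choice of $U$ makes the pushforward of a three-term locally free sequence do all of that at once. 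Both versions are complete; yours is the more self-contained and slightly more informative of the two.
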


\begin{proof}

Let $Z \subset X$ be the union of the singular points of $X$ and the isolated singularities of $D$. Then $Z$ is a finite set. Let $U=X-Z$. Then $U$ is smooth and the restriction of $D$ on $U$ has only divisorial singularities. Therefore the quotient of $U$ by $D$ is smooth~\cite{RS76}. Therefore there exists an exact sequence
\[
0\rightarrow \mathcal{O}_U(\Delta|_U)\rightarrow T_U \rightarrow L_U \rightarrow 0,
\]
where $L_U$ is an invertible sheaf on $U$~\cite[Proposition 1.9.3]{MP97}. Moreover, from the above sequence it follows that $L_U=\omega_U^{-1}(-\Delta|U)$. Applying $i_{\ast}$ in the above sequence, where $i \colon U \rightarrow X$ is the inclusion, and thaking into consideration that $\omega_X$ is invertible, we get an exact sequence
\[
0 \rightarrow \mathcal{O}_X(\Delta)\rightarrow T_X \rightarrow \omega^{-1}(-\Delta) \rightarrow \mathcal{F} \rightarrow 0,
\]
where $\mathcal{F}$ is a zero dimensional coherent sheaf whose support is contained in the union of the singular points of $X$ and the isolated singularities of $D$, as claimed.

\end{proof}

The next proposition gives a Riemann-Roch type inequality for divisors on surfaces with rational double points. 
\begin{proposition}\label{sec11-prop-2}
Let $X$ be a normal projective surface over an algebraically closed field $k$. Suppose that the singularities of $X$ are rational double points. Let $D$ be a divisor on $X$. Then
\[
\chi(\mathcal{O}_X(D))\leq \chi(\mathcal{O}_X)+\frac{1}{2}(D^2-K_X\cdot D).
\]
 \end{proposition}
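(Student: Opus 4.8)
The plan is to reduce the statement to the ordinary Riemann--Roch equality on the minimal resolution. Let $f\colon X'\rightarrow X$ be the minimal resolution. Since the singularities of $X$ are rational double points they are rational singularities and $K_{X'}=f^{\ast}K_X$; rationality gives $R^if_{\ast}\mathcal{O}_{X'}=0$ for $i>0$, hence $\chi(\mathcal{O}_{X'})=\chi(\mathcal{O}_X)$. The idea is to choose an integral divisor $D'$ on $X'$ lying over $D$ for which $f_{\ast}\mathcal{O}_{X'}(D')=\mathcal{O}_X(D)$ and $R^1f_{\ast}\mathcal{O}_{X'}(D')=0$. The Leray spectral sequence then yields $\chi(\mathcal{O}_X(D))=\chi(\mathcal{O}_{X'}(D'))$, and ordinary Riemann--Roch on the smooth surface $X'$ computes the right hand side.

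The natural choice is $D'=\lceil f^{\ast}D\rceil$, where $f^{\ast}D$ is the numerical pullback, i.e. the unique $\mathbb{Q}$-divisor with $f^{\ast}D\cdot E_i=0$ for every $f$-exceptional curve $E_i$. Writing $f^{\ast}D=\tilde{D}+E$ with $\tilde{D}$ the strict transform, the relations $E\cdot E_i=-\tilde{D}\cdot E_i\le 0$ together with the negative definiteness of the exceptional intersection form force $E\ge 0$, so $F:=\lceil f^{\ast}D\rceil-f^{\ast}D$ is an effective exceptional $\mathbb{Q}$-divisor all of whose coefficients $f_i$ lie in $[0,1)$. Using $f^{\ast}D\cdot E_i=0$, $f^{\ast}K_X\cdot E_i=0$ and the projection formula one gets $K_{X'}\cdot D'=K_X\cdot D$ and $(D')^2=(f^{\ast}D)^2+F^2=D^2+F^2$, whence $(D')^2\le D^2$ because $F$ is exceptional. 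The identity $f_{\ast}\mathcal{O}_{X'}(D')=\mathcal{O}_X(D)$ is checked on sections: a rational function $g$ is a section of $f_{\ast}\mathcal{O}_{X'}(D')$ iff $\mathrm{div}_{X'}(g)+D'\ge0$, which along the non-exceptional divisors reads $\mathrm{div}_{X}(g)+D\ge0$, i.e. $g$ is a section of $\mathcal{O}_X(D)$; conversely if $\mathrm{div}_X(g)+D\ge0$ then $\mathrm{div}_{X'}(g)+f^{\ast}D=f^{\ast}(\mathrm{div}_X(g)+D)\ge0$ since the numerical pullback of an effective divisor is effective, and $D'\ge f^{\ast}D$ gives $\mathrm{div}_{X'}(g)+D'\ge0$.

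Granting the vanishing $R^1f_{\ast}\mathcal{O}_{X'}(D')=0$, the Leray spectral sequence gives $\chi(\mathcal{O}_X(D))=\chi(\mathcal{O}_{X'}(D'))$, and Riemann--Roch on $X'$ together with $\chi(\mathcal{O}_{X'})=\chi(\mathcal{O}_X)$, $(D')^2\le D^2$ and $K_{X'}\cdot D'=K_X\cdot D$ yields
\[
\chi(\mathcal{O}_X(D))=\chi(\mathcal{O}_X)+\tfrac12\big((D')^2-K_{X'}\cdot D'\big)\le\chi(\mathcal{O}_X)+\tfrac12\big(D^2-K_X\cdot D\big),
\]
which is the assertion.

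The main obstacle is the vanishing $R^1f_{\ast}\mathcal{O}_{X'}(D')=0$; it is the only step beyond formal bookkeeping and the only one sensitive to positive characteristic. By the theorem on formal functions it suffices to prove $H^1(Z,\mathcal{O}_{X'}(D')|_Z)=0$ for every effective divisor $Z$ supported on the exceptional locus. Here the rational double point structure enters: each $E_i$ is a smooth rational curve with $E_i^2=-2$, and $D'\cdot E_i=F\cdot E_i=-2f_i+\sum_{j\ne i}f_j(E_j\cdot E_i)>-2$ (using $0\le f_i<1$), so the integer $D'\cdot E_i$ is $\ge -1$. One then proves the required vanishing by building up $Z$ one component at a time and using the exact sequences $0\rightarrow \mathcal{O}_{X'}(D'-Y)|_{E_i}\rightarrow \mathcal{O}_{X'}(D')|_{Z}\rightarrow \mathcal{O}_{X'}(D')|_{Y}\rightarrow 0$ for $Z=Y+E_i$, reducing to the vanishing of $H^1$ of line bundles of degree $\ge -1$ on the $\mathbb{P}^1$'s. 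I expect the delicate point to be arranging the order of the components so that the degrees $(D'-Y)\cdot E_i$ do not drop below $-1$; this is exactly where rationality of the singularity and the strict inequality $f_i<1$ are used (an arbitrary line bundle of the same degrees need not have vanishing $R^1f_{\ast}$, as the case of two adjacent $(-2)$-curves both of degree $-1$ shows, and that case is precisely excluded by $f_i<1$). Since the computation is local and combinatorial, depending only on the $ADE$ configuration and Artin's theory of rational singularities, it is valid in every characteristic.
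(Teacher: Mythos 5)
Your overall strategy coincides with the paper's: descend Riemann--Roch from the minimal resolution by exhibiting a divisor $D'$ on $X'$ with $f_*\mathcal{O}_{X'}(D')=\mathcal{O}_X(D)$, $R^1f_*\mathcal{O}_{X'}(D')=0$, $K_{X'}\cdot D'=K_X\cdot D$ and $(D')^2\le D^2$. The difference is in how $D'$ is produced and how the two sheaf-theoretic identities are justified: the paper takes $\mathcal{O}_{X'}(D')=(f^{\ast}\mathcal{O}_X(D))^{[1]}$ and cites Artin's \emph{Reflexive modules over rational double points} [Ar85] for both $f_*\mathcal{O}_{X'}(D')=\mathcal{O}_X(D)$ and $R^1f_*\mathcal{O}_{X'}(D')=0$, whereas you take the round-up $D'=\lceil f^{\ast}D\rceil$ of the numerical pullback and try to prove everything by hand. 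Your numerical bookkeeping ($K_{X'}\cdot D'=K_X\cdot D$ and $(D')^2=D^2+F^2\le D^2$) and your section-by-section verification of $f_*\mathcal{O}_{X'}(D')=\mathcal{O}_X(D)$ are correct. One small slip: the inequality $E\cdot E_i=-\tilde{D}\cdot E_i\le 0$ presupposes $D$ effective, so the conclusion $E\ge 0$ need not hold for a general divisor $D$; but you never actually use it, since the coefficients of $F=\lceil f^{\ast}D\rceil-f^{\ast}D$ lie in $[0,1)$ by the very definition of the round-up.

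The one genuine gap is the vanishing $R^1f_*\mathcal{O}_{X'}(D')=0$, which you only sketch and where you explicitly leave open the combinatorial point of ordering the components in the induction; this is exactly the step the paper disposes of by citation. The statement you need is true and standard: since $K_{X'}=f^{\ast}K_X$ is itself a numerical pullback, one has $D'=K_{X'}+\lceil f^{\ast}(D-K_X)\rceil$ with $f^{\ast}(D-K_X)$ numerically $f$-trivial, hence $f$-nef, and the relative vanishing theorem for birational morphisms of surfaces (valid in every characteristic; its proof is precisely the formal-functions induction you outline, with the component at each stage selected by a negativity-lemma argument rather than ad hoc) yields $R^1f_*\mathcal{O}_{X'}(D')=0$. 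So your route is viable, but as written the key cohomological input is asserted rather than proved; either complete that induction or appeal to [Ar85] or to a Sakai-type relative vanishing statement, as the paper does.
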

\begin{remark}
The difference between the right hand side and the left hand side has been calculated explicitly with respect to the analytic type of the singularities of $X$ by M. Reid~\cite{Re85} in the case when the base field is $\mathbb{C}$. A similar calculation may be possible and desirable in positive characteristic.  However, for the purposes of this paper, the above inequality suffices. 
\end{remark}

\begin{proof}
Let $f \colon X^{\prime} \rightarrow X$ be the minimal resolution of $X$. Then the double dual $(f^{\ast}\mathcal{O}_X(D)))^{[1]}$ is invertible and hence $(f^{\ast}\mathcal{O}_X(D)))^{[1]}=\mathcal{O}_{X^{\prime}}(D^{\prime})$, where $D^{\prime}$ is a divisor on $X^{\prime}$. Now by~\cite{Ar85}, $f_{\ast}\mathcal{O}_{X^{\prime}}(D^{\prime})=\mathcal{O}_X(D)$ and $R^1f_{\ast} \mathcal{O}_{X^{\prime}}(D^{\prime})=0$. Therefore, 
\begin{gather}\label{sec11-eq-0}
\chi(\mathcal{O}_X(D))= \chi(\mathcal{O}_{X^{\prime}}(D^{\prime})).
\end{gather}
 Then by Rieman-Roch on $X^{\prime}$,
\begin{gather}\label{sec11-eq-1}
\chi(\mathcal{O}_{X^{\prime}}(D^{\prime}))=\chi(\mathcal{O}_{X^{\prime}})+\frac{1}{2}((D^{\prime})^2-K_{X^{\prime}}\cdot D^{\prime}).
\end{gather}
Since $X$ has rational double points and $X^{\prime}$ is the minimal resolution of $X$, $\chi(\mathcal{O}_{X^{\prime}})=\chi(\mathcal{O}_X)$ and $K_{X^{\prime}}=f^{\ast}K_X$. Moreover, it is clear that $f_{\ast}D^{\prime}=D$ and hence by the projection formula, 
\begin{gather}\label{sec11-eq-3}
K_{X^{\prime}}\cdot D^{\prime} =f^{\ast}K_X \cdot D^{\prime}=K_X \cdot C.
\end{gather}
Next I will relate $D^2$ and $(D^{\prime})^2$. Since $X$ has rational double points, $D$ is $\mathbb{Q}$-Cartier. Let $m\in \mathbb{Z}$ be a positive integer such that $mD$ is  Cartier. Then 
\[
mD^{\prime}=f^{\ast}(mD)+ F,
\]
where $F$ is a divisor supported on the exceptional set of $f$. Then,
\[
m^2(D^{\prime})^2=m^2D^2+F^2<m^2D^2,
\]
since $F^2<0$. Hence $(D^{\prime})^2<D^2$. Now the statement of the proposition follows from this and the equations (\ref{sec11-eq-0}), (\ref{sec11-eq-1}) and (\ref{sec11-eq-3}).
\end{proof}

The next result relates the first cohomology of the tangent sheaf of a rational double point with the number of exceptional divisors over it in the minimal resolution.

\begin{proposition}\label{sec11-prop-3}
Let $P\in X$ be a rational double point singularity. Let $f \colon X^{\prime} \rightarrow X$ be its minimal resolution and $E_i$, $i=1,\ldots, n$ the $f$-exceptional curves . Then
\[
h^1(T_{X^{\prime}}) \geq n.
\]
\end{proposition}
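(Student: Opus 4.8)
The plan is to restrict the tangent sheaf $T_{X'}$ to the exceptional configuration and reduce the lower bound to a cohomology computation on a tree of $(-2)$-curves. Since $P$ is a rational double point and $f$ is its minimal resolution, $K_{X'}=f^{\ast}K_X$, so each exceptional curve satisfies $K_{X'}\cdot E_i=0$; combined with the fact that the $E_i$ are smooth rational curves this gives $E_i\cong\mathbb{P}^1$ and $E_i^2=-2$ by adjunction, and from the classification of rational double points the configuration $E=\bigcup_{i=1}^n E_i$ is a tree of such curves meeting transversally at $n-1$ nodes (dual graph an $ADE$ diagram, no triple points). The strategy is then: show the restriction map $H^1(X',T_{X'})\to H^1(E,T_{X'}|_E)$ is surjective, so that it suffices to prove $h^1(E,T_{X'}|_E)\ge n$.

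For the computation on $E$, I would first treat a single component. Because $N_{E_i/X'}=\mathcal{O}_{\mathbb{P}^1}(E_i^2)=\mathcal{O}_{\mathbb{P}^1}(-2)$, the normal bundle sequence
\[
0\to T_{E_i}\to T_{X'}|_{E_i}\to N_{E_i/X'}\to 0
\]
reads $0\to\mathcal{O}_{\mathbb{P}^1}(2)\to T_{X'}|_{E_i}\to\mathcal{O}_{\mathbb{P}^1}(-2)\to 0$. Since $H^1(\mathbb{P}^1,\mathcal{O}(2))=0$ and $H^1(\mathbb{P}^1,\mathcal{O}(-2))=k$, the long exact sequence gives $h^1(E_i,T_{X'}|_{E_i})=1$, and this holds in every characteristic as it uses only line-bundle cohomology on $\mathbb{P}^1$. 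To pass from the components to $E$, I would tensor the normalization sequence $0\to\mathcal{O}_E\to\bigoplus_i\mathcal{O}_{E_i}\to\bigoplus_{\mathrm{nodes}}k\to 0$ with the locally free sheaf $T_{X'}|_E$, obtaining
\[
0\to T_{X'}|_E\to\bigoplus_i T_{X'}|_{E_i}\to\bigoplus_{\mathrm{nodes}}(T_{X'}|_E)_p\to 0.
\]
Since the cokernel is a skyscraper sheaf, the associated long exact sequence ends with a surjection $H^1(E,T_{X'}|_E)\twoheadrightarrow\bigoplus_i H^1(E_i,T_{X'}|_{E_i})\cong k^n$, whence $h^1(E,T_{X'}|_E)\ge n$.

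I expect the main obstacle to be the surjectivity of $H^1(X',T_{X'})\to H^1(E,T_{X'}|_E)$, equivalently the vanishing of $H^2(X',T_{X'}\otimes\mathcal{I}_E)$ coming from $0\to T_{X'}\otimes\mathcal{I}_E\to T_{X'}\to T_{X'}|_E\to 0$. Here the correct reading of the statement is the local one: one works over $\mathcal{O}_{X,P}$, so that $h^1(T_{X'})$ denotes $\dim_k R^1f_{\ast}T_{X'}$, which is precisely the quantity needed later to bound the number of singular points. In this setting the vanishing is automatic, since the fibres of $f$ have dimension at most one, so $R^2f_{\ast}$ of any coherent sheaf vanishes, and $X=\operatorname{Spec}\mathcal{O}_{X,P}$ is affine, giving $H^2(X',T_{X'}\otimes\mathcal{I}_E)=H^0(X,R^2f_{\ast}(T_{X'}\otimes\mathcal{I}_E))=0$. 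Combining this surjectivity with $h^1(E,T_{X'}|_E)\ge n$ yields $h^1(T_{X'})\ge n$. The points I would verify with care are the transversality and tree structure of $E$ (so the normalization sequence has the stated shape and the cokernel contributes no higher cohomology) and the local vanishing just described, both of which are characteristic-independent.
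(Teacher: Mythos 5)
Your proof is correct, and its core is the same as the paper's: each exceptional curve is a $(-2)$-curve, and one dimension of $H^1$ is extracted from each via $h^1(\mathbb{P}^1,\mathcal{O}_{\mathbb{P}^1}(-2))=1$. The two arguments differ in how they reduce to the exceptional locus and in how they organize the computation there. The paper picks an integral cycle $Z$ supported on the exceptional set with $-Z$ $f$-ample, uses $H^i(T_{X^{\prime}}(-mZ))=0$ for $m\gg 0$ to identify $H^1(T_{X^{\prime}})$ with $H^1(T_{X^{\prime}}\otimes\mathcal{O}_{mZ})$, and then passes to $\mathcal{O}_E$; you instead note that over the affine local base $R^2f_{\ast}$ of any coherent sheaf vanishes, so the restriction $H^1(X^{\prime},T_{X^{\prime}})\to H^1(E,T_{X^{\prime}}|_E)$ is automatically surjective. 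Your reduction is shorter and sidesteps the twisted vanishing. On $E$ itself the paper uses the single sequence $0\to T_E\to T_{X^{\prime}}\otimes\mathcal{O}_E\to\oplus_i\mathcal{N}_{E_i}\to 0$, whose exactness at the nodes it leaves as a local check, whereas you first normalize and then apply the normal bundle sequence on each smooth component; both yield the bound $n$, and your version uses only completely standard sequences (note that the tree/transversality structure of $E$ is not even essential for your normalization step, since any skyscraper cokernel would give the needed surjection on $H^1$). Finally, your reading of $h^1(T_{X^{\prime}})$ as $\dim_k R^1f_{\ast}T_{X^{\prime}}$ is exactly the form in which the proposition is used in Proposition~\ref{sec11-prop-4}, so the local interpretation is the intended one.
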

\begin{remark}
If the characteristic of the base field is zero then the inequality in the previous proposition is in fact equality~\cite{BW74}.
\end{remark}

\begin{proof}
The proof follows the lines of the proof of~\cite[Pages 70, 71]{BW74} with some modifications to deal with the possible positive characteristic complications.

Let $Z=\sum_{i=1}^n m_i E_i$ an integral  effective divisor supported on the exceptional set of $f$. Then for sufficiently large $m_i$, $i=1,\ldots, n$, $-Z$ is $f$-ample. Therefore, $H^i(T_{X^{\prime}}(-mZ))=0$, for $m>>0$, $i=1,2$. Taking now cohomology on the exact sequence
\[
0 \rightarrow T_{X^{\prime}}(-mZ) \rightarrow T_{X^{\prime}} \rightarrow T_{X^{\prime}}\otimes \mathcal{O}_{mZ} \rightarrow 0,
\]
it follows that
\begin{gather}\label{sec11-eq-4}
H^1(T_{X^{\prime}})=H^1(T_{X^{\prime}}\otimes \mathcal{O}_{mZ}).
\end{gather}
Let $E=\sum_{i=1}^n E_i$ be the reduced $f$-exceptional divisor. Then there exists an exact sequence
\[
0 \rightarrow N \rightarrow \mathcal{O}_{mZ} \rightarrow \mathcal{O}_E \rightarrow 0,
\]
where $N$ is supported on the exceptional set of $f$. Then the previous sequence gives the exact sequence
\[
0 \rightarrow T_{X^{\prime}}\otimes N \rightarrow T_{X^{\prime}} \otimes \mathcal{O}_{mZ} \rightarrow T_{X^{\prime}} \otimes \mathcal{O}_E \rightarrow 0.
\]
After taking cohomology in the previous sequence, and since $N$ has 1-dimensional support, it follows that
\begin{gather}\label{sec11-eq-5}
h^1(T_{X^{\prime}}) \geq h^1( T_{X^{\prime}} \otimes \mathcal{O}_E).
\end{gather}
Next, there exists an exact sequence
\begin{gather}\label{sec11-eq-6}
0 \rightarrow T_E \rightarrow T_{X^{\prime}} \otimes \mathcal{O}_E \stackrel{\sigma}{\rightarrow} \oplus_{i=1}^n\mathcal{N}_{E_i} \rightarrow 0,
\end{gather}
where the map $\sigma$ is the sum of the composition of the natural maps $T_{X^{\prime}}\otimes \mathcal{O}_E \rightarrow T_{X^{\prime}}\otimes \mathcal{O}_{E_i} $ and $ T_{X^{\prime}}\otimes  \mathcal{O}_{E_i} \rightarrow \mathcal{N}_{E_i}$, $i=1,\ldots, n$. The exactness of the sequence above can easily be checked locally. 

Now since $P \in X$ is a rational double point, $E_i \cong \mathbb{P}^1$ and $\mathcal{N}_{E_i}\cong \mathcal{O}_{\mathbb{P}^1}(-2)$, $i=1,\ldots, n$. The proposition now follows from the equation (\ref{sec11-eq-5}) and by taking cohomology in (\ref{sec11-eq-6}).

Finally I would like to mention that in~\cite{BW74}, the equality in the statement of the  proposition is proved by taking the exact sequence (\ref{sec11-eq-6}) with $mZ$ in the place of $E$ on the left hand side of the sequence and then using a result by Tjurina that $H^1(T_{mZ})=0$. However, this is proved only in characteristic zero and moreover, the exact (\ref{sec11-eq-6}) may not be exact with $mZ$ in the place of $E$ if some of the coefficients of $mZ$ are divisible by $p$.  

\end{proof}

The next proposition gives a bound for the number of singular points of a projective surface  with rational double points and a nontrivial  global vector field.

\begin{proposition}\label{sec11-prop-4}
Let $X$ be a normal projective surface over an algebraically closed field of characteristic $p>0$ with rational double point singularities. Suppose that $X$ has a nontrivial global vector field $D$ such that $D^p=0$ or $D^p=D$. Let $f \colon X^{\prime} \rightarrow X$ be the minimal resolution of $X$. Then
\[
\sum_{P\in X}\nu(P) \leq 12 \chi(\mathcal{O}_X)-K_X^2+\Delta^2+K_X \cdot \Delta,
\]
where $\Delta$ is the divisorial part of $D$ and $\nu(P)$ is the number of $f$-exceptional curves over $P\in X$. 
\end{proposition}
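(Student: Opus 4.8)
The plan is to derive the bound by comparing the Euler characteristics of the tangent sheaves of $X$ and of its minimal resolution $X^{\prime}$, the difference being concentrated at the singular points and large enough to account for all exceptional curves. Writing $\ell_P=\dim_k(R^1f_{\ast}T_{X^{\prime}})_P$, the backbone of the argument is the chain
\[
\sum_{P\in X}\nu(P)\ \leq\ \sum_{P\in X}\ell_P\ \leq\ \chi(T_X)-\chi(T_{X^{\prime}}),
\]
after which I would estimate $\chi(T_X)$ from above and compute $\chi(T_{X^{\prime}})$ exactly.

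To bound $\chi(T_X)$, I would use that rational double points are Gorenstein, so Proposition~\ref{sec11-prop-1} applies and gives the four term exact sequence $0\to\mathcal{O}_X(\Delta)\to T_X\to\mathcal{O}_X(-K_X-\Delta)\to\mathcal{F}\to0$ with $\mathcal{F}$ of finite length. Additivity of $\chi$ yields $\chi(T_X)=\chi(\mathcal{O}_X(\Delta))+\chi(\mathcal{O}_X(-K_X-\Delta))-\ell(\mathcal{F})$, and feeding the divisors $\Delta$ and $-K_X-\Delta$ into the Riemann-Roch inequality of Proposition~\ref{sec11-prop-2} gives, after a short computation,
\[
\chi(T_X)\ \leq\ 2\chi(\mathcal{O}_X)+K_X^2+\Delta^2+K_X\cdot\Delta-\ell(\mathcal{F}).
\]

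For $\chi(T_{X^{\prime}})$ I would apply Riemann-Roch to the rank two bundle $T_{X^{\prime}}$ on the smooth surface $X^{\prime}$ together with Noether's formula (both valid in arbitrary characteristic), obtaining $\chi(T_{X^{\prime}})=2K_{X^{\prime}}^2-10\chi(\mathcal{O}_{X^{\prime}})$; crepancy of $f$ gives $K_{X^{\prime}}=f^{\ast}K_X$ and $\chi(\mathcal{O}_{X^{\prime}})=\chi(\mathcal{O}_X)$, hence $\chi(T_{X^{\prime}})=2K_X^2-10\chi(\mathcal{O}_X)$. Subtracting, $\chi(T_X)-\chi(T_{X^{\prime}})\leq12\chi(\mathcal{O}_X)-K_X^2+\Delta^2+K_X\cdot\Delta-\ell(\mathcal{F})$, which is the asserted inequality up to the harmless term $-\ell(\mathcal{F})\leq0$. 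The left inequality of the chain is the local content of Proposition~\ref{sec11-prop-3}: applied to each singular point and its resolution, where over the affine local base the Leray spectral sequence identifies $h^1$ of the tangent sheaf of the resolution with $\ell_P$, it gives $\ell_P\geq\nu(P)$. The middle inequality comes from the Leray spectral sequence for $f$: since the fibres of $f$ have dimension at most one, $R^{i}f_{\ast}T_{X^{\prime}}=0$ for $i\geq2$ and $\chi(T_{X^{\prime}})=\chi(f_{\ast}T_{X^{\prime}})-\sum_P\ell_P$, while the injection $f_{\ast}T_{X^{\prime}}\hookrightarrow T_X$, whose cokernel is supported on the finite singular set, gives $\chi(f_{\ast}T_{X^{\prime}})\leq\chi(T_X)$.

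The delicate point, and the place where positive characteristic intervenes, is precisely the comparison of $f_{\ast}T_{X^{\prime}}$ with $T_X$. In characteristic zero every vector field lifts to the resolution and this map is an isomorphism, but in characteristic $p$ it need not be, as Example~\ref{example} shows. What rescues the argument is that only the inequality $\chi(f_{\ast}T_{X^{\prime}})\leq\chi(T_X)$ is needed: the cokernel of $f_{\ast}T_{X^{\prime}}\to T_X$ has non-negative length and falls on the favourable side of the estimate, so the failure of vector fields to lift never obstructs the bound. Equally, I must read Proposition~\ref{sec11-prop-3} as the local statement $\ell_P\geq\nu(P)$ rather than as a global inequality on the projective surface $X^{\prime}$, for which the vanishing $H^{i}(T_{X^{\prime}}(-mZ))=0$ used in its proof would fail.
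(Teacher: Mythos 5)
Your proposal is correct and follows essentially the same route as the paper: the injection $f_{\ast}T_{X^{\prime}}\hookrightarrow T_X$ with zero-dimensional cokernel, the Leray identity $\chi(f_{\ast}T_{X^{\prime}})=\chi(T_{X^{\prime}})+h^0(R^1f_{\ast}T_{X^{\prime}})$, the upper bound on $\chi(T_X)$ from Propositions~\ref{sec11-prop-1} and~\ref{sec11-prop-2}, the Riemann--Roch/Noether computation $\chi(T_{X^{\prime}})=2K_X^2-10\chi(\mathcal{O}_X)$, and Proposition~\ref{sec11-prop-3} to bound $h^0(R^1f_{\ast}T_{X^{\prime}})$ below by $\sum_P\nu(P)$. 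Your explicit remarks that only the inequality $\chi(f_{\ast}T_{X^{\prime}})\leq\chi(T_X)$ is needed and that Proposition~\ref{sec11-prop-3} must be read locally are accurate clarifications of points the paper leaves implicit.
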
 
\begin{remark}
If $\Delta =0$, a case that frequently happens, then $12\chi(\mathcal{O}_X)-K_X^2$ is a bound for the singular points of $X$, a bound which is a function of only numerical invariants of $X$. A similar bound will be given later without the assumption $\Delta=0$ if $K_X$ is ample and $p$ does not divide $K_X^2$.
\end{remark}

\begin{proof}
There exists a natural exact sequence
\[
0 \rightarrow f_{\ast}T_{X^{\prime}} \rightarrow T_X \rightarrow N \rightarrow 0,
\]
where $N$ is a zero dimensional coherent sheaf on $X$ supported on the singular locus of $X$. Hence $\chi(N)=h^0(N)\geq 0$. Then from the above sequence it follows that
\begin{gather}\label{sec11-eq-7}
\chi(f_{\ast}T_{X^{\prime}})\leq \chi (f_{\ast}T_{X^{\prime}})+\chi(N)=\chi(T_X).
\end{gather}
From the Leray spectral sequence and considering that $f$ is birational with at most one dimensional fibers we get the exact sequence
\[
0 \rightarrow H^1(f_{\ast}T_{X^{\prime}})\rightarrow H^1(T_{X^{\prime}}) \rightarrow H^0(R^1f_{\ast}T_{X^{\prime}})\rightarrow H^2(f_{\ast}T_{X^{\prime}})\rightarrow H^2(T_{X^{\prime}}) \rightarrow 0.
\]
Counting dimensions we get that
\begin{gather}\label{sec11-eq-8}
\chi(f_{\ast}T_{X^{\prime}})=\chi(T_{X^{\prime}})+h^0(R^1f_{\ast}T_{X^{\prime}})
\end{gather}
Now from Propositions~\ref{sec11-prop-1},~\ref{sec11-prop-2} it follows that
\begin{gather}\label{sec11-eq-9}
\chi (T_X)=\chi(\mathcal{O}(\Delta))+\chi(\omega^{-1}_X(-\Delta))-\chi(\mathcal{F})\leq \chi(\mathcal{O}(\Delta))+\chi(\omega^{-1}_X(-\Delta)) \leq \\\nonumber
2\chi(\mathcal{O}_X)+\frac{1}{2}(\Delta^2-K_X\cdot \Delta)+\frac{1}{2}((K_X+\Delta)^2+K_X\cdot (K_X+\Delta))=\\\nonumber
2\chi(\mathcal{O}_X)+K_X^2+K_X\cdot \Delta +\Delta^2.
\end{gather}
Now from the equations (\ref{sec11-eq-7}), (\ref{sec11-eq-8}) and (\ref{sec11-eq-9}) we get that
\begin{gather}\label{sec11-eq-10}
\chi(T_{X^{\prime}})+h^0(R^1f_{\ast}T_{X^{\prime}})\leq 2\chi(\mathcal{O}_X)+K_X^2+K_X\cdot \Delta +\Delta^2.
\end{gather}

Then by Proposition~\ref{sec11-prop-3} and the previous inequality we get that
\begin{gather}\label{sec11-eq-12}
\chi(T_{X^{\prime}})+\sum_{P\in X}\nu(P) \leq \chi(T_{X^{\prime}})+h^0(R^1f_{\ast}T_{X^{\prime}})\leq 2\chi(\mathcal{O}_X)+K_X^2+K_X\cdot \Delta +\Delta^2.
\end{gather}
Now by the Riemann-Roch on $X^{\prime}$, Noether's formula and the facts that $K_{X^{\prime}}=f^{\ast}K_X$, $\chi(\mathcal{O}_{X^{\prime}})=\chi(\mathcal{O}_X)$ (since $X$ has rational double point singularities), we get that
\begin{gather}\label{sec11-eq-11}
\chi(T_{X^{\prime}})=\frac{7}{6}K_{X^{\prime}}^2-\frac{5}{6}c_2(X^{\prime})=\frac{7}{6}K_{X^{\prime}}^2-\frac{5}{6}(12\chi(\mathcal{O}_{X^{\prime}})-K_{X^{\prime}}^2)=
-10\chi(\mathcal{O}_{X^{\prime}})+2K_X^2.
\end{gather}
Now the statement of the proposition follows immediately from the equations (\ref{sec11-eq-12}) and (\ref{sec11-eq-11}).
\end{proof}

The following  lemma is an easy generalization of the Hodge index theorem to surfaces with rational double points. It will be used throughout this paper.

\begin{lemma}\label{hodge} 
Let $X$ be a normal projective surface with rational double points. Let $A$ be a nef and big line bundle on $X$ and $C$ a divisor on $X$. Then 
\[
C^2 A^2 \leq (C\cdot A)^2.
\]
\end{lemma}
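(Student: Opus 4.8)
The plan is to reduce the statement to the classical Hodge index theorem on a smooth surface by passing to the minimal resolution. Let $f \colon X' \to X$ be the minimal resolution of $X$. Since the singularities of $X$ are rational double points, $X$ is $\mathbb{Q}$-factorial, so both $A$ and $C$ are $\mathbb{Q}$-Cartier and admit numerical pullbacks $f^{\ast}A$ and $f^{\ast}C$ as $\mathbb{Q}$-divisors on $X'$; since $A$ is a line bundle, $f^{\ast}A$ is its ordinary pullback. By Mumford's intersection theory on normal surfaces, the intersection numbers on $X$ are by definition computed through these pullbacks, so that $A^2 = (f^{\ast}A)^2$, $C^2 = (f^{\ast}C)^2$, and $C \cdot A = f^{\ast}C \cdot f^{\ast}A$. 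It therefore suffices to prove the inequality for $f^{\ast}C$ and $f^{\ast}A$ on the smooth surface $X'$.

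First I would verify that $A' := f^{\ast}A$ has $(A')^2 > 0$. For any irreducible curve $\Gamma \subset X'$ one has $A' \cdot \Gamma = A \cdot f_{\ast}\Gamma \geq 0$, where $f_{\ast}\Gamma = 0$ when $\Gamma$ is $f$-exceptional and $A \cdot f_{\ast}\Gamma \geq 0$ otherwise because $A$ is nef; hence $A'$ is nef on $X'$. Moreover $(A')^2 = A^2 > 0$ since $A$ is big. Thus $A'$ is a nef divisor of positive self-intersection on the smooth projective surface $X'$.

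Next I would invoke the Hodge index theorem on $X'$: on a smooth projective surface, if $H$ is a divisor with $H^2 > 0$, then every divisor $D$ satisfies $(D \cdot H)^2 \geq D^2 H^2$. Applying this with $H = A'$ and $D = C' := f^{\ast}C$ — and using the $\mathbb{Q}$-divisor form of the theorem, obtained by clearing denominators, since $C'$ is in general only a $\mathbb{Q}$-divisor — gives
\[
(C' \cdot A')^2 \geq (C')^2 (A')^2.
\]
Translating back through the equalities of intersection numbers recorded above yields $(C \cdot A)^2 \geq C^2 A^2$, which is the assertion.

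No serious obstacle arises. The only points demanding attention are that $C$ is merely a Weil divisor, so that $f^{\ast}C$ is generally a genuine $\mathbb{Q}$-divisor and the $\mathbb{Q}$-divisor version of the Hodge index theorem must be used, and that the identities relating intersection numbers on $X$ to those on $X'$ are precisely the definition of Mumford's pairing, so require only its compatibility with pullback rather than any independent computation.
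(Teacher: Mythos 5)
Your proposal is correct and follows essentially the same route as the paper: pass to the minimal resolution, use that rational double points make $C$ $\mathbb{Q}$-Cartier so one can clear denominators (or work with $\mathbb{Q}$-divisors), observe that $f^{\ast}A$ remains nef and big, and apply the Hodge index theorem on the smooth surface $X'$ together with compatibility of intersection numbers under pullback. The only cosmetic difference is that the paper invokes a generalized Hodge index theorem for nef and big line bundles by citation, whereas you verify nefness of $f^{\ast}A$ directly and use the standard signature form of the theorem for a divisor of positive self-intersection; both are equivalent here.
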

\begin{proof}
Let $X^{\prime}\rightarrow X$ be the minimal resolution of $X$. Since $X$ has rational double point singularities, $C$ is $\mathbb{Q}$-Cartier. Let $m>0$ be an integer such that $mC$ is Cartier. Then, since $f^{\ast}A$ is also nef and big on $X^{\prime}$ and the generalized Hodge index theorem for nef and big line bundles~\cite[Corollary 2.4]{Ba01}, it follows that 
\[
m^2C^2 \cdot A^2=(f^{\ast}(mC))^2\cdot (f^{\ast}A)^2 \leq ( f^{\ast}(mC) \cdot f^{\ast}A)^2=m^2(C\cdot A)^2.
\]
From this the lemma follows immediately.
\end{proof}

The following proposition is the last ingredient needed in order to prove Theorems~\ref{sec11-th-1},~\ref{sec11-th-2}. It will also be needed later for the proof of the main theorem of this paper.

\begin{proposition}\label{sec11-prop-5}
Let $X$ be a canonically polarized surface defined over a field of characteristic $p>0$. Suppose that $X$ has a nontrivial global vector field $D$  such that $D^p=0$ or $D^p=D$ and such that  $p$ does not divide $K_X^2$. Then
\begin{enumerate}
\item  Suppose that $K_X^2=1$ and $p\not=2$. Then $K_X\cdot \Delta \leq 4$ and $\Delta^2\leq 16$.
\item Suppose that $K_X^2\geq 2$ and $p\not= 3$. Then
\begin{gather}
K_X\cdot \Delta \leq 3K_X^2,\\
\Delta^2 \leq 9K_X^2,\nonumber
\end{gather}
\end{enumerate}
where $\Delta $ is the divisorial part of $D$.
\end{proposition}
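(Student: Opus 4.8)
The plan is to bound $K_X\cdot\Delta$ by restricting the sub-line-bundle $\mathcal{O}_X(\Delta)\hookrightarrow T_X$ furnished by Proposition~\ref{sec11-prop-1} to a general pluricanonical curve, and then to extract the bound on $\Delta^2$ from the Hodge index inequality of Lemma~\ref{hodge}. If $\Delta=0$ both inequalities are trivial, so I would assume $\Delta\neq 0$. Note that $D^p=0$ or $D^p=D$ is used both to invoke Proposition~\ref{sec11-prop-1} and to have at our disposal the $\alpha_p$- or $\mu_p$-quotient below.

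Set $m=4$ if $K_X^2=1$ and $m=3$ if $K_X^2\geq 2$. In either case $|mK_X|$ is base point free by~\cite{Ek88} and defines a generically finite morphism $\phi\colon X\to\mathbb{P}^N$; since $mK_X$ is big its image $Z$ is a surface and $m^2K_X^2=(mK_X)^2=\deg\phi\cdot\deg Z$. Let $C\in|mK_X|$ be a general member, which is integral. I would restrict the inclusion $\mathcal{O}_X(\Delta)\hookrightarrow T_X$ to $C$ and compose with the projection $T_X|_C\to N_{C/X}=\mathcal{O}_X(mK_X)|_C$, obtaining a map of line bundles on $C$. Provided this composite is nonzero, comparing degrees yields $\Delta\cdot C\leq mK_X\cdot C$, that is $m(K_X\cdot\Delta)\leq m^2K_X^2$, whence $K_X\cdot\Delta\leq mK_X^2$. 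The composite vanishes precisely when $\mathcal{O}_X(\Delta)|_C$ lands in $T_C$, i.e. when $C$ is an integral curve of $D$; so everything reduces to proving that a general $C$ is \emph{not} an integral curve of $D$.

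This last point is the crux, and is where the hypotheses $p\nmid K_X^2$ and $p\neq 2,3$ enter. Let $\pi\colon X\to Y$ be the quotient by the $\alpha_p$- or $\mu_p$-action attached to $D$; it is finite and purely inseparable of degree $p$. If a general $C$ were an integral curve of $D$, then $\pi^{\ast}\widetilde{C}=C$, and running over the general (hence spanning) members would force $\mathcal{O}_X(mK_X)$ together with its sections to descend along $\pi$; consequently $\phi$ would factor as $\phi=\psi\circ\pi$ with $\psi\colon Y\to Z$ generically finite. Then $p\mid\deg\phi$, so $p\mid\deg\phi\cdot\deg Z=m^2K_X^2$. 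Since $p\neq 2,3$ gives $p\nmid m$ and by hypothesis $p\nmid K_X^2$, this is a contradiction. Hence a general $C$ is not an integral curve of $D$, the composite above is nonzero, and $K_X\cdot\Delta\leq mK_X^2$ holds. The main obstacle is exactly this descent step: verifying that ``general member an integral curve'' really forces $\mathcal{O}_X(mK_X)$ and its sections to descend along $\pi$. For $\mu_p$ this is cleanest, via the weight decomposition of $H^0(mK_X)$ (the integral members being the semi-invariant sections, which cannot cover $|mK_X|$ unless a single weight occurs); for $\alpha_p$ the analogous statement needs a little more care, and I expect this to be the most delicate part of the proof.

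Finally, I would apply Lemma~\ref{hodge} to the divisor $\Delta$ and the ample, hence nef and big, class $A=K_X$, giving $\Delta^2\,K_X^2\leq(\Delta\cdot K_X)^2\leq(mK_X^2)^2$, so $\Delta^2\leq m^2K_X^2$. With $m=4$ when $K_X^2=1$ this reads $K_X\cdot\Delta\leq 4$ and $\Delta^2\leq 16$, and with $m=3$ when $K_X^2\geq 2$ it reads $K_X\cdot\Delta\leq 3K_X^2$ and $\Delta^2\leq 9K_X^2$, as claimed. Apart from the descent step, the argument is just a degree count on $C$ followed by Hodge index.
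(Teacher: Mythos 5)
Your derivation of $K_X\cdot\Delta\le mK_X^2$ \emph{given} that a general $C\in|mK_X|$ is not an integral curve of $D$ is a legitimate alternative to the paper's argument: the paper instead compares $p_a(C)$ with $p_a(\tilde C)$ for $\tilde C=\pi(C)$ using $\pi^{\ast}\tilde C=pC$, adjunction on the smooth locus of $Y$, and $K_X=\pi^{\ast}K_Y+(p-1)\Delta$, arriving at $(p-1)(m^2K_X^2-mK_X\cdot\Delta)\ge 0$. Your degree count on the composite $\mathcal{O}_X(\Delta)|_C\to T_X|_C\to N_{C/X}$ is the local-to-global shadow of the same phenomenon and gives the identical bound; it is fine provided you note that a general $C$ avoids $\mathrm{Sing}(X)$ and the isolated singularities of $D$ (so that the sequence of Proposition~\ref{sec11-prop-1} restricts exactly to $C$) and that integrality of the general member is itself a consequence of Zariski--Jouanolou together with $p\nmid m^2K_X^2$ --- you assert it but it is not automatic in characteristic $p$. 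The concluding Hodge index step coincides with the paper's.

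The genuine gap is exactly where you flagged it: the descent argument ruling out that a general $C$ is an integral curve. Proposition~\ref{sec1-prop1} produces a weight decomposition only on $H^0(X,(\pi^{\ast}L)^{[1]})$ for a sheaf $L$ coming from $Y$, and $\mathcal{O}_X(mK_X)$ is \emph{not} known to be of this form: since $K_X=\pi^{\ast}K_Y+(p-1)\Delta$, it is $m(K_X+\Delta)$, not $mK_X$, that descends along $\pi$, and you are precisely in the case $\Delta\neq 0$ (otherwise both inequalities are trivial). So the ``semi-invariant sections cannot fill $|mK_X|$'' step has no decomposition to run on, and you leave the $\alpha_p$ case open besides. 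Fortunately no descent is needed: if a single general $C$ (integral, avoiding $\mathrm{Sing}(X)$ and the isolated singularities of $D$) were an integral curve of $D$, then by~\cite{RS76} $\pi^{\ast}\tilde C=C$ with $\tilde C$ contained in the smooth locus of $Y$, whence $C^2=p\,\tilde C^2$ and $p\mid m^2K_X^2$, contradicting $p\nmid m$ and $p\nmid K_X^2$. This one-line replacement (which is what the paper does) closes your gap and makes the rest of your argument go through.
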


\begin{proof}

Let $\pi \colon X \rightarrow Y$ be the quotient of $X$ by the $\alpha_p$ or $\mu_p$ action on $X$ defined by $D$. Then $\pi$ is a purely inseparable map of degree $p$ and by~\cite{RS76}, $K_X=\pi^{\ast}K_Y+(p-1)\Delta$ (this formula holds by~\cite{RS76} in the smooth part of $Y$ and hence everywhere since $Y$ is normal).

By~\cite[Theorem 1.20]{Ek88}, the linear system $|nK_X|$ is base point free for $n=3$ if $K_X^2=1$ and $n=2$ if $K_X^2\geq 2$. 

Suppose that $K_X^2\geq 2$ and hence $n=2$. The proof in the case when $K_X^2=1$ and $n=3$ is identical and is omitted. Then by~\cite[Theorem 6.3]{Jou83},~\cite{Za44}, the general member of $|3K_X|$ is of the form $p^{\nu} C$, where $C$ is an irreducible and reduced curve. Since $p$ does not divide $K_X^2$, $\nu=0$ and hence the general member of $|3K_X|$ is a reduced and irreducible curve. 

Therefore  there exists $C\in |3K_X|$ such that $C$ is reduced and irreducible and it does not pass through any singular point of $X$ or isolated singularity of $D$. Let $\tilde{C}=\pi(C)$. Then, since $C$ is in the smooth part of $X$ and does not contain any isolated singularity of $D$, $\tilde{C}$ lies in the smooth part of $Y$.

Suppose that $C$ is an integral curve of $D$. Then~\cite{RS76}, $\pi^{\ast}\tilde{C}=C$ and therefore $C^2=p\tilde{C}^2=pm$, $m\in \mathbb{Z}$ since $\tilde{C}$ is in the smooth part of $Y$. Then, 
since $C\in |3K_X|$, it follows that $p$ divides $9K_X^2$ and hence, since $p\not= 3$, $p$ divides $K_X^2$, which is impossible.

Hence $C$ is not an integral curve of $D$ and hence the map $\pi \colon C \rightarrow \tilde{C} $ is birational. Moreover~\cite{RS76}, $\pi^{\ast}\tilde{C}=pC$. Now since $\tilde{C}$ is contained in the smooth part of $Y$, adjunction for $\tilde{C}$ holds and hence
\begin{gather*}
2p_a(\tilde{C})-2=K_Y\cdot \tilde{C} +\tilde{C}^2=\pi^{\ast}K_Y\cdot C+pC^2=K_X\cdot C-(p-1)\Delta \cdot C +pC^2=\\
(K_X\cdot C+C^2)+(p-1)(C^2-\Delta \cdot C)=2p_a(C)-2+(p-1)(9K_X^2-3K_X\cdot \Delta).
\end{gather*}

Since the map $C\rightarrow \tilde{C}$ is birational, it follows that $p_a(\tilde{C}) \geq p_a(C)$. Then the above equation gives that $3K_X^2-K_X\dot \Delta \geq 0$ and hence $K_X \cdot \Delta \leq 3K_X^2$, as claimed.

Finally, since $K_X$ is ample, it follows from Lemma~\ref{hodge} that 
\[
\Delta^2 \leq \frac{(\Delta\cdot K_X)^2}{K_X^2}\leq \frac{(3K_X^2)^2}{K_X^2}= 9K_X^2,
\]
as claimed.

\end{proof}

We are now in a position to prove Theorems~\ref{sec11-th-1},~\ref{sec11-th-2}.

\begin{proof}[\textbf{Proof of Theorem~\ref{sec11-th-1}}]
Since $X$ has a nontrivial global vector field, it follows from~\cite{RS76} that $X$ has a nontrivial global vector field $D$ such that $D^p=0$ or $D^p=D$. Then the statement of the theorem follows immediately from Propositions~\ref{sec11-prop-4},~\ref{sec11-prop-5}. In the case when $K_X^2=1$, one must also use that fact that $1\leq \chi(\mathcal{O}_X) \leq 3$~\cite{Li09}.
\end{proof}

\begin{proof}[\textbf{Proof of Theorem~\ref{sec11-th-2}}]
I will only do the case when $K_X^2\geq 2$. The case when $K_X^2=1$ is identical and is omitted.

By assumption $X$ has canonical singularities and hence its singularities are rational double points. Since $p>5$, the equations classifying the rational double points are the same as those in characteristic zero~\cite{Ar77}. Hence $X$ may have either singularities of type $A_n$, $D_m$, $E_6$, $E_7$ and $E_8$. Then, by Theorem~\ref{sec11-th-1}, if $12\chi(\mathcal{O}_X)+11K_X^2+1<p$, $n+1<p$ and $m<p$. The statement of the theorem is local at the singularities. In order to prove the theorem consider cases with respect to the singularities of $X$.

Let $P\in X$ be a singular point of $X$.  I will do in detail only the case when $P\in X$ is of type $A_n$. The rest are similar and are left to the reader.

By passing to the completion at $P$, we may assume that $X$ is given by $xy+z^{n+1}=0$. Moreover, by the assumptions and Theorem~\ref{sec11-th-1},  $n+1 <p$. $D$ is induced by a derivation $D$ of 
$k[[x,y,z]]$ such that $D(xy+z^{n+1})\in (xy+z^{n+1})$. Now
\[
D(xy+z^{n+1})=xDy+yDx+(n+1)z^nDz,
\]
with $n+1\not=0$. From the above equation it follows that $yDx\in (x,z)$ and hence, since $y\not\in (x,z)$, it follows that $Dx\in (x,z)\subset (x,y,z)$. Similarly, $Dy\in (y,z)$. Finally, from the previous equation it follows that $z^nDz\in (x,y,z^{n+1})$. If $Dz\not\in (x,y,z)$, then $Dz$ is a unit in $k[[x,y,z]]$ and hence $z^n\in (x,y,z^{n+1})$, which is impossible. Hence in this case, $P$ is a fixed point of $D$.

Next I will show that $D$ lifts to the minimal resolution $f \colon X^{\prime}\rightarrow X$ of $X$. Since $X$ has rational double points, $f$ is obtained by successively blowing up the singular points. Let $f_1 \colon X_1 \rightarrow X$ be the blow up of all singular points of $X$. Then, since the singular points of $X$ are fixed points of $D$, $D$ lifts to a vector field $D_1$ on $X_1$. Moreover, $X_1$ has also rational double points, of simpler type that those of $X$. Then, the previous argument shows that the singular points of $X_1$ are fixed points of $D_1$. Then one can blow up again and continue this process until the minimal resolution is reached and therefore $D$ lifts to a vector field $D^{\prime}$ on $X^{\prime}$.

 It remains to show that every $f$-exceptional curve is an integral curve of $D^{\prime}$. In order to prove this it suffices to prove, since $f$ is a composition of blow ups, the following. Let $P\in Z$ be a rational double point on a surface $Z$  which is a fixed point of a vector field $D$ of $Z$ and let $g \colon \tilde{Z} \rightarrow Z$ be the blow up of $P$. Then the reduced $g$-exceptional curves are integral curves of $\tilde{D}$, the vector field on $Z$ lifting $D$.  
 
 Let $E$ be a $g$-exceptional curve.  Suppose that $P\in X$ is of type $A_n$. Then $f^{-1}(P)=E_1+E_2$, where where $E_1, E_2$ are distinct smooth rational curves. Suppose that $P \in X$ is of one of the types  $D_n$, $E_6$, $E_7$, $E_8$. Then $f^{-1}(P)=2E$, where $E$ is a smooth rational curve. Then the claim that the $g$-exceptional curves are integral curves of $\tilde{D}$ is an immediate consequence of Lemma~\ref{sec11-lemma-1} which follows.
 
 This concludes the proof of Theorem~\ref{sec11-th-2}.
 
\begin{lemma}\label{sec11-lemma-1}
Let $f \colon X \rightarrow Y$ be a morphism between varieties defined over an algebraically closed field $k$ of characteristic $p>0$ Such that $X$ is normal. Suppose that $D_Y$ is a nontrival global vector field on $Y$ and $D_X$ a nontrivial global vector field on $X$ lifting $D_Y$, i.e., there exists a commutative diagram 
\[
\xymatrix{
f_{\ast}\mathcal{O}_X \ar[r]^{D_X} & f_{\ast}\mathcal{O}_X\\
\mathcal{O}_Y \ar[u]\ar[r]^{D_Y} & \mathcal{O}_Y \ar[u]
}
\]
Let $P\in Y$ be a fixed point of $D_Y$ and $[f^{-1}(P)]=\sum_{i=1}^n m_i Z_i$, be the cycle corresponding to the fiber $f^{-1}(P)$. Let $Z_i$ be a codimension 1 component such that $p$ does not divide $m_i$, Then $D_X(I_{Z_i}) \subset I_{Z_i} $, i.e., $Z_i$ is stabilized by $D_X$.
\end{lemma}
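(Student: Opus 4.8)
The plan is to reduce the statement to a computation at the generic point $\eta_i$ of the prime divisor $Z_i$, where the local ring is a discrete valuation ring. Since $X$ is normal and $Z_i$ has codimension $1$, the local ring $R := \mathcal{O}_{X,\eta_i}$ is a DVR; let $t$ be a uniformizer and $v$ the associated valuation, so that $I_{Z_i}$ localizes to $(t)=\mathfrak{m}_R$. Because $Z_i$ is an integral closed subscheme, for a prime ideal one has $I_{Z_i}=I_{Z_i,\eta_i}\cap \mathcal{O}_X$, so a local regular function lies in $I_{Z_i}$ exactly when its valuation at $\eta_i$ is positive. Hence the sheaf-theoretic condition $D_X(I_{Z_i})\subset I_{Z_i}$ is equivalent to the single assertion $v(D_X(t))\geq 1$: once $D_X(t)\in (t)$, the Leibniz rule gives $D_X(th)=D_X(t)h+tD_X(h)\in (t)$ for every $h\in R$, and thus $D_X$ carries any function of positive valuation at $\eta_i$ to another such function. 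So I would spend the entire argument proving $v(D_X(t))\geq 1$.

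Next I would identify the relevant ideal in $R$. Writing $\mathfrak{m}_P\subset \mathcal{O}_{Y,P}$ for the maximal ideal of the fixed point $P$, the scheme-theoretic fibre $f^{-1}(P)$ is cut out by the extended ideal $f^{\ast}\mathfrak{m}_P$, and since $Z_i$ is a component of $f^{-1}(P)$ we have $f(\eta_i)=P$, so $f$ induces $\mathcal{O}_{Y,P}\to R$. By definition of the fibre cycle $[f^{-1}(P)]=\sum_j m_j Z_j$, the multiplicity $m_i$ equals $\mathrm{length}_R\big(R/(f^{\ast}\mathfrak{m}_P)R\big)$. In the DVR $R$ this forces $(f^{\ast}\mathfrak{m}_P)R=(t^{m_i})$; in particular there is an element $u\in \mathfrak{m}_P$ with $v(f^{\ast}u)=m_i$, so that $f^{\ast}u=t^{m_i}w$ for a unit $w\in R$.

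The lifting hypothesis enters through the commutative diagram, which says precisely that the localization of $D_X$ satisfies $D_X(f^{\ast}g)=f^{\ast}(D_Y g)$ for every local section $g$ of $\mathcal{O}_Y$. Since $P$ is a fixed point of $D_Y$ we have $D_Y(\mathfrak{m}_P)\subset \mathfrak{m}_P$, hence $D_X(f^{\ast}u)=f^{\ast}(D_Y u)\in (f^{\ast}\mathfrak{m}_P)R=(t^{m_i})$, that is $v(D_X(f^{\ast}u))\geq m_i$. On the other hand, applying the Leibniz rule to $f^{\ast}u=t^{m_i}w$ gives
\[
D_X(f^{\ast}u)=m_i\, t^{m_i-1} w\, D_X(t) + t^{m_i}\, D_X(w).
\]
The second summand has valuation at least $m_i$, so $v\big(m_i\, t^{m_i-1} w\, D_X(t)\big)=v\big(D_X(f^{\ast}u)-t^{m_i}D_X(w)\big)\geq m_i$ as well.

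The final step — and the place where the hypothesis $p\nmid m_i$ is essential — is the valuation bookkeeping. Because $p\nmid m_i$, the integer $m_i$ is a nonzero scalar in $k$ and hence a unit in $R$; since $w$ is also a unit and $v(t^{m_i-1})=m_i-1$, we obtain $v\big(m_i\,t^{m_i-1}w\,D_X(t)\big)=(m_i-1)+v(D_X(t))$. Combining this with the inequality above yields $(m_i-1)+v(D_X(t))\geq m_i$, i.e. $v(D_X(t))\geq 1$, which is exactly what the reduction requires. I expect this last computation to be the main obstacle, precisely because of the role of $p\nmid m_i$: if $p$ divided $m_i$, the leading coefficient $m_i$ would vanish in characteristic $p$, the term $m_i t^{m_i-1}wD_X(t)$ would disappear identically, and the argument would yield no control on $D_X(t)$. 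This is why the coprimality of $m_i$ with $p$ cannot be dropped.
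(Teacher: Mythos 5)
Your proof is correct and follows essentially the same route as the paper's: both arguments localize near the generic point of $Z_i$ so that the ideal of the fibre becomes the $m_i$-th power of a principal ideal, apply the Leibniz rule to that power, and use $p\nmid m_i$ to cancel the coefficient $m_i$. The only difference is presentational — the paper shrinks to an affine open in the smooth locus where $\mathbf{m}_PB=(b^{m_i})$ with $(b)=I_{Z_i}$, whereas you work in the DVR at $\eta_i$ and carry the extra unit $w$, which if anything makes the identification of $m_i$ with the length of the fibre at $\eta_i$ more transparent.
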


\begin{proof}
Let $Z_i$ be a codimension 1 component of $f^{-1}(P)$ such that $p$ does not divide $m_i$. In order to prove that $D_X(I_{Z_i})\subset I_{Z_i}$ it suffices to prove this in an affine open set $U$ of $X$ which is contained in the smooth locus of $X$ and such that $U \cap Z_i \not= \emptyset$. Therefore the proof is reduced to the case when both $X$ and $Y$ are affine.  Let then $Y=\mathrm{Spec} A$, $X =\mathrm{Spec}B$. Then $D_Y$, $D_X$ are induced by derivations of $A$ and $B$, respectively.  Let $\mathbf{m}_p\subset A$ be the maximal ideal corresponding to $P$. Then $f^{-1}(P)$ is given by the ideal $\mathbf{m}_PB$ of $B$. Moreover, since $D_X$ lifts $D_Y$ and $D_Y(\mathbf{m}_P) \subset \mathbf{m}_P$, it follows easily that  $D_X(\mathbf{m}_PB) \subset \mathbf{m}_PB$. Then if $U$ is chosen small enough, 
$\mathbf{m}_PB=I_{Z_i}^{m_i}$. Moreover, since $X$ is normal and $U$ is in the smooth part of $X$, $I_{Z_i}$ is a prime ideal of $B$ and $I_{Z_i}=(b)$, for some $b\in B$. Then 
\[
D_X(b^{m_i}) =m_ib^{m_{i-1}}D_Xb\in (b^{m_i}).
\]
Since $p$ does not divide $m_i$ it follows that $b^{m_{i-1}}D_Xb\in (b^{m_i})$ and hence $b^{m_{i-1}}D_Xb =b^{m_i}c$ and therefore $D_Xb \in (b)$. Hence $D_X(I_{Z_i}) \subset I_{Z_i}$, as claimed.

\end{proof}

\end{proof}


\section{Integral curves and fixed points of vector fields on surfaces.}\label{sec-2}
Let $X$ be a normal projective surface defined over an algebraically closed field $k$ of characteristic $p>0$. Let $D$ be a nontrivial vector field on $X$ (or equivalently a $k$-derivation of $\mathcal{O}_X$). This section contains various properties of integral curves of $D$ which are needed for the proofs of the main results of this paper.



The next proposition presents a method  to find integral curves of $D$.

\begin{proposition}[Proposition 2.1~\cite{Tz18}]\label{sec1-prop1}
Suppose that either $D^p=0$ or $D^p=D$. Then $D$ induces an  $\alpha_p$ or $\mu_p$ action on $X$, respectively. Let $\pi \colon X \rightarrow Y$ be the quotient of $X$ by this action. Let $L$ be a rank one reflexive sheaf on $Y$ and $M=(\pi^{\ast}L)^{[1]}$. Then $D$ induces a $k$-linear map
\[
D^{\ast} \colon H^0(X,M) \rightarrow H^0(X,M)
\]
with the following properties:
\begin{enumerate}
\item $\mathrm{Ker}(D^{\ast})=H^0(Y,L)$ (considering $H^0(Y,L)$ as a subspace of $H^0(X,M)$ via the map $\pi^{\ast}$).
\item If $D^p=0$ then $D^{\ast}$ is nilpotent and if $D^p=D$ then $D^{\ast}$ is a diagonalizable map whose eigenvalues are in the set $\{0,1,\ldots,p-1\}$.
\item Let $s\in H^0(X,M)$ be an eigenvector of $D^{\ast}$. Then $D(I_{Z(s)})\subset I_{Z(s)}$, where  $Z(s)$ is the divisor of zeros of $s$. In particular, if $D^{\ast}(s)=\lambda s$, and $\lambda\not= 0$, then $(D(I_{Z(s)}))|_V=I_{Z(s)}|_V$, where $V=X-\pi^{-1}(W)$, $W\subset Y$ is the set of points that $L$ is not free.
 \end{enumerate}
\end{proposition}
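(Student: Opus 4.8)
The plan is to work first at the generic point and then spread out. Write $K = k(X)$ and let $K_0 = \ker(D\colon K \to K)$; by the theory of Rudakov--Shafarevich this is exactly $k(Y)$, and $[K:K_0] = p$. A rank one reflexive sheaf $L$ on $Y$ has a one-dimensional generic fibre $L_\eta$ over $K_0$, so the generic fibre of $M = (\pi^{\ast}L)^{[1]}$ is $M_\eta = L_\eta \otimes_{K_0} K$, a one-dimensional $K$-vector space. I would define $D^{\ast}$ on $M_\eta$ as the unique additive operator satisfying the Leibniz rule $D^{\ast}(fm) = D(f)m + fD^{\ast}(m)$ for $f \in K$ and killing $\pi^{\ast}L_\eta$, i.e. $D^{\ast}(\ell \otimes 1) = 0$ for $\ell \in L_\eta$. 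Fixing a $K_0$-basis $\ell$ of $L_\eta$ identifies $M_\eta \cong K$ as a $K$-module, and under this identification $D^{\ast}$ becomes the derivation $D$ itself, since $D^{\ast}(\ell \otimes f) = \ell \otimes D(f)$. This identification is what makes the later computations transparent.

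The hard part will be checking that $D^{\ast}$ carries the regular global sections $H^0(X,M) \subset M_\eta$ into themselves. Let $U_0 \subset X$ be the intersection of the smooth locus of $X$ with $\pi^{-1}(Y \setminus W)$, where $W$ is the closed locus on which $L$ fails to be free. Since $L$ is reflexive of rank one on the normal surface $Y$, $W$ has codimension $\geq 2$ and hence is finite; as $\pi$ is finite and $X$ is normal, both $\pi^{-1}(W)$ and $\mathrm{Sing}(X)$ are finite, so $X \setminus U_0$ has codimension $\geq 2$. On $U_0$ the sheaf $M$ agrees with the invertible sheaf $\pi^{\ast}L$, locally generated by $\pi^{\ast}e$ for a local generator $e$ of $L$; a local section is $f\,\pi^{\ast}e$ with $f \in \mathcal{O}_X$, and $D^{\ast}(f\,\pi^{\ast}e) = D(f)\,\pi^{\ast}e$ is regular wherever $f$ is, because $D$ is a regular derivation of $\mathcal{O}_X$. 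Thus $D^{\ast}$ preserves $H^0(U_0,M)$; since $M$ is reflexive and $X \setminus U_0$ has codimension $\geq 2$, restriction gives $H^0(X,M) = H^0(U_0,M)$, whence $D^{\ast}(H^0(X,M)) \subseteq H^0(X,M)$.

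With $D^{\ast}$ in hand, the three properties follow. For (2), the identification $M_\eta \cong K$ turns $(D^{\ast})^n$ into $D^n$, so $(D^{\ast})^p$ corresponds to $D^p$: if $D^p = 0$ then $(D^{\ast})^p = 0$ and $D^{\ast}$ is nilpotent on the finite-dimensional space $H^0(X,M)$, while if $D^p = D$ then $(D^{\ast})^p = D^{\ast}$, so $D^{\ast}$ annihilates the separable polynomial $t^p - t = \prod_{a=0}^{p-1}(t-a)$ and is therefore diagonalizable with eigenvalues in the prime field $\{0,1,\ldots,p-1\}\subset k$. For (1), the kernel of $D^{\ast}$ on $M_\eta$ is the $K_0$-locus $L_\eta$, so an element of $\ker(D^{\ast}) \cap H^0(X,M)$ is $\pi^{\ast}$ of a rational section $t$ of $L$; regularity descends along the finite morphism $\pi$ to the normal surface $Y$ because $\mathcal{O}_Y = \ker(D\colon \mathcal{O}_X \to \mathcal{O}_X)$ locally equals $\mathcal{O}_X \cap K_0$, and then reflexivity of $L$ across the codimension-two locus $W$ shows $t \in H^0(Y,L)$; this identifies $\ker(D^{\ast})$ with $\pi^{\ast}H^0(Y,L)$.

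Finally, for (3), suppose $D^{\ast}(s) = \lambda s$. On $U_0$ write $s = f\,\pi^{\ast}e$, so $I_{Z(s)} = (f)$ and the eigenvalue equation reads $D(f) = \lambda f$. The Leibniz rule gives $D(gf) = D(g)f + \lambda g f \in (f)$ for all $g$, hence $D(I_{Z(s)}) \subseteq I_{Z(s)}$; this is an inclusion of divisorial ideals, verified in codimension one, and so extends across the codimension-two complement of $U_0$ to all of $X$. If in addition $\lambda \neq 0$, then $f = \lambda^{-1}D(f) \in D(I_{Z(s)})$, which yields the reverse inclusion and hence the equality $(D(I_{Z(s)}))|_V = I_{Z(s)}|_V$ on $V = X \setminus \pi^{-1}(W)$. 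The only genuinely delicate point is the well-definedness in the second paragraph; everything else is a formal consequence of the generic identification $D^{\ast} \leftrightarrow D$.
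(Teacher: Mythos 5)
The paper does not prove this proposition: it is imported verbatim as Proposition~2.1 of the corrigendum [Tz18], so there is no in-text argument to compare against. Your proof is correct and is essentially the standard construction used there: define $D^{\ast}$ by the Leibniz rule so that it kills $\pi^{\ast}L$, observe that under a local (or generic) trivialization $f\,\pi^{\ast}e \mapsto D(f)\,\pi^{\ast}e$ it is just $D$ itself (well defined because two generators of $L$ differ by a unit in $\mathcal{O}_Y=\mathcal{O}_X^D$, which $D$ kills), and extend across the codimension-two bad locus by reflexivity; parts (1)--(3) then follow exactly as you say. One cosmetic point: in part (3) you carry out the computation on $U_0=V\cap X_{\mathrm{sm}}$ but assert the equality on all of $V$; this is harmless, since the local calculation uses only that $M$ is invertible (true on all of $V$), not smoothness of $X$, so it applies verbatim at the finitely many singular points of $X$ lying in $V$.
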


The previous proposition shows that every eigenvector of $D^{\ast}$ corresponds to a curve $C\subset X$ such that $D(I_C)\subset I_C$ and therefore $D$ induces a vector field on $C$. However it is possible that $D(\mathcal{O}_X)\subset I_C$ and hence the induced vector field on $C$ is trivial. This implies that $C$ is contained in the divisorial part of $D$. This cannot happen of course if $D$ has only isolated singularities. 

Let $C=n_1C_1+\cdots +n_kC_k$ be a curve in $X$ and its decomposition into its prime components. Suppose that $D(I_C)\subset I_C$. In general $D$ does not induce vector fields on $C_i$, i.e, $D(I_{C_i})$ may not be contained in $I_{C_i}$. For example for any reduced and irreducible curve  $C$, $D$ stablizes $pC$ but not necessarily $C$. The next proposition provides some conditions in order for $D$ to restrict to $C_i$.

\begin{proposition}\label{sec1-prop2}
Let $C \subset X$ be a curve such that $D(I_C)\subset I_C$, where $I_C \subset \mathcal{O}_X$ is the ideal sheaf of $C$ in $X$. Let $C=n_1C_1+\cdots +n_kC_k$ be the decomposition of $C$ in its irreducible and reduced components. If $p$ does not divide $n_i$, for all $1\leq i \leq k$, then $D(I_{C_i})\subset I_{C_i}$, for all $1\leq i \leq k$. Therefore $D$ stabilizes the reduced part of every irreducible component of $C$ and hence induces a vector field on $C_i$, for all $1\leq i \leq k$.
\end{proposition}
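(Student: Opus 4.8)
The plan is to reduce the global sheaf inclusion $D(I_{C_i})\subset I_{C_i}$ to a computation at the generic point of $C_i$, exploiting the fact that the obstruction to the inclusion is governed by an $\mathcal{O}_X$-linear map into a torsion-free sheaf. For each fixed $i$, I would consider the $k$-linear sheaf map obtained by restricting $D$ to $I_{C_i}$ and composing with the projection $\mathcal{O}_X\to\mathcal{O}_X/I_{C_i}=\mathcal{O}_{C_i}$; call it $\bar{D}\colon I_{C_i}\to\mathcal{O}_{C_i}$. The inclusion $D(I_{C_i})\subset I_{C_i}$ is precisely the statement $\bar{D}=0$. The key observation is that although $D$ is only $k$-linear, $\bar{D}$ is in fact $\mathcal{O}_X$-linear: for $a\in\mathcal{O}_X$ and $s\in I_{C_i}$ the Leibniz rule gives $D(as)=aD(s)+sD(a)$, and the correction term $sD(a)$ lies in $I_{C_i}$, so it dies modulo $I_{C_i}$. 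Hence $\bar{D}\in\mathrm{Hom}_{\mathcal{O}_X}(I_{C_i},\mathcal{O}_{C_i})$, and its image is an $\mathcal{O}_{C_i}$-submodule (an ideal sheaf) of $\mathcal{O}_{C_i}$.

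Since $C_i$ is a prime divisor it is integral, so $\mathcal{O}_{C_i}$ is torsion-free over itself; consequently the image of $\bar{D}$ is a coherent ideal sheaf which vanishes identically as soon as it vanishes at the generic point $\eta_i$ of $C_i$. So it suffices to check $\bar{D}_{\eta_i}=0$. Here the normality of $X$ enters: $\mathcal{O}_{X,\eta_i}$ is a discrete valuation ring, say with uniformizer $t$, and since the other components $C_j$ ($j\neq i$) do not pass through $\eta_i$, at $\eta_i$ one has $I_{C}=(t^{n_i})$ and $I_{C_i}=(t)$. The hypothesis $D(I_C)\subset I_C$ gives $D(t^{n_i})\in(t^{n_i})$, and the power rule yields $D(t^{n_i})=n_i\,t^{n_i-1}D(t)$. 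This is exactly where the arithmetic hypothesis is used: because $p$ does not divide $n_i$, the integer $n_i$ is a unit in the DVR, so $t^{n_i-1}D(t)\in(t^{n_i})$; comparing valuations forces $D(t)\in(t)$, that is $\bar{D}_{\eta_i}=0$.

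Combining the two steps gives $\bar{D}=0$, hence $D(I_{C_i})\subset I_{C_i}$ for every $i$, and then $D$ descends to a derivation of $\mathcal{O}_{C_i}$, i.e. a nontrivial vector field on $C_i$. I expect the genuinely delicate point to be the globalization of the first paragraph rather than the valuation computation: one must be careful that $C_i$ may well be singular (non-normal as an abstract curve), so the argument cannot invoke smoothness of $C_i$; what saves it is only that $C_i$ is integral, which makes $\mathcal{O}_{C_i}$ torsion-free and lets vanishing at the single generic point propagate to the whole component. A secondary point to verify cleanly is that at $\eta_i$ the divisorial ideal $I_C$ really is $(t^{n_i})$, which again relies on normality of $X$ (so that $I_C=\mathcal{O}_X(-C)$ in codimension one) and on the components $C_j$ being distinct.
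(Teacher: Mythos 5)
Your proof is correct and follows essentially the same route as the paper's: both reduce to a local computation at (or near) the generic point of $C_i$, where $I_C=(t^{n_i})$ and $I_{C_i}=(t)$, and conclude from $n_i t^{n_i-1}Dt\in(t^{n_i})$ that $Dt\in(t)$ because $p\nmid n_i$ makes $n_i$ a unit. The only difference is that you justify the reduction explicitly, via the $\mathcal{O}_X$-linearity of the induced map $I_{C_i}\to\mathcal{O}_{C_i}$ together with the integrality of $C_i$, whereas the paper simply asserts that it suffices to check the inclusion on a nonempty open set meeting $C_i$ and no other component.
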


\begin{proof}
Let $i\in \{1,\ldots, k\}$. In order to prove that $D(I_{C_i})\subset I_{C_i}$ it suffices to show this on a nonempty open subset $U$ of $X$ such that $U \cap C_i \not= \emptyset$. In fact, by taking $U$ small enough we may assume that $U \cap C_j =\emptyset$, for all $j\not= i$. Hence we may assume that $X=\mathrm{Spec}A$ is affine and smooth and $C=n_iC_i$. Hence $I_C=(t^{n_i})$, for some $t\in A$ and $I_{C_i}=(t)$. $D$ is induced by a $k$-derivation of $A$. Then since $D(I_C)\subset I_C$, it follows that $n_it^{n_i-1}Dt \in (t^{n_i})$ and hence there exists $a\in A$ such that $n_it^{n_i-1}Dt =at^{n_i}$. Now since $p$ does not divide $n_i$, $n_i \not=0$ in $k$ and hence it follows that $Dt \in (t)$. Hence $D(I_{C_i})=I_{C_i}$, as claimed.

\end{proof}

\begin{corollary}\label{sec1-cor1}
With assumptions as in Proposition~\ref{sec1-prop2}. Suppose  in addition that $K_X$ is an ample invertible sheaf and  $K_X \cdot C < p$. Then $D(I_{C_i})\subset I_{C_i}$, for all $1\leq i \leq k$. Therefore $D$ stabilizes the reduced part of every irreducible component of $C$ and hence induces a vector field on $C_i$, for all $1\leq i \leq k$.
\end{corollary}

\begin{proof}
Since $K_X$ is assumed to be ample and invertible, the condition $K_X\cdot C <p$ immediately implies that $n_i <p$, for all $1\leq i \leq k$. Then the corollary follows directly from Proposition~\ref{sec1-prop2}.
\end{proof}

\begin{proposition}\label{sec1-prop3}
Suppose that $X$ is $\mathbb{Q}$-factorial and $K_X$ is an ample invertible sheaf. Let $C\in |mK_X|$ be a curve such that $D(I_C) \subset I_C$. Let $C=n_1C_1+\cdots +n_kC_k$ its decomposition into its reduced and irreducible components. Suppose that $K_X^2< p/(m^2+3m)$. Let $P\in C_i \cap C_j$, $i\not= j$, be a closed point such that $P\in X$ is smooth. Then $P$ is a fixed point of $D$. 

\end{proposition}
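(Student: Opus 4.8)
The plan is to argue by contradiction, combining a divisibility property of the intersection number $C_i\cdot C_j$ coming from the quotient by $D$ with an explicit upper bound for $C_i\cdot C_j$ forced by the hypothesis on $K_X^2$. I would first record the global reduction. Since $C\in|mK_X|$, one has $K_X\cdot C=mK_X^2<m\cdot p/(m^2+3m)=p/(m+3)<p$, so Corollary~\ref{sec1-cor1} applies and $D$ stabilizes every component of $C$; in particular $D(I_{C_i})\subset I_{C_i}$ and $D(I_{C_j})\subset I_{C_j}$. Now suppose, for contradiction, that $P$ is not a fixed point of $D$. If either $C_i$ or $C_j$ were contained in the divisorial part $\Delta$ of $D$, then $D$ would vanish identically along a curve through $P$, giving $(D(\mathcal{O}_{X,P}))\subset\mathbf{m}_P$ and hence making $P$ a fixed point; this is excluded. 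So neither component lies in $\Delta$, and since each is reduced, irreducible and stabilized by $D$, both $C_i$ and $C_j$ are integral curves of $D$.

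Next I would extract a lower bound from the quotient. Taking $D$ of additive or multiplicative type, let $\pi\colon X\to Y$ be the quotient by the induced $\alpha_p$ or $\mu_p$ action, a purely inseparable finite morphism of degree $p$. Because $C_i$ and $C_j$ are integral curves, $\pi^{\ast}\tilde{C}_i=C_i$ and $\pi^{\ast}\tilde{C}_j=C_j$, where $\tilde{C}_i=\pi(C_i)$ and $\tilde{C}_j=\pi(C_j)$; moreover $\tilde{C}_i\neq\tilde{C}_j$ since $C_i\neq C_j$. As $P$ is a smooth point of $X$ that is not fixed by $D$, its image $\tilde{P}=\pi(P)$ is a smooth point of $Y$, so $\tilde{C}_i$ and $\tilde{C}_j$ meet at $\tilde{P}$ with a positive integral local multiplicity, whence $\tilde{C}_i\cdot\tilde{C}_j\geq 1$. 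By the projection formula (valid with $\mathbb{Q}$-coefficients since $X$, and hence $Y$, is $\mathbb{Q}$-factorial),
\[
C_i\cdot C_j=\pi^{\ast}\tilde{C}_i\cdot\pi^{\ast}\tilde{C}_j=p\,(\tilde{C}_i\cdot\tilde{C}_j)\geq p.
\]

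Finally I would produce the contradicting upper bound, which is the real point of the argument. Writing $C-n_iC_i=\sum_{l\neq i}n_lC_l\geq n_jC_j$ as effective divisors and using $C_i\cdot C_l\geq 0$ for $l\neq i$, one gets $C_i\cdot C_j\leq C_i\cdot(C-n_iC_i)=m(K_X\cdot C_i)-n_iC_i^2$. The term $-n_iC_i^2$ is controlled by adjunction on the minimal resolution $f\colon X'\to X$: if $\hat{C}_i$ is the strict transform, then $2p_a(\hat{C}_i)-2=K_X\cdot C_i+\hat{C}_i^2\leq K_X\cdot C_i+C_i^2$, so $p_a(\hat{C}_i)\geq 0$ yields $C_i^2\geq -2-K_X\cdot C_i$ and therefore $-n_iC_i^2\leq 2n_i+n_i(K_X\cdot C_i)$. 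Since $K_X$ is ample we have $K_X\cdot C_i\geq 1$ and $n_i(K_X\cdot C_i)\leq K_X\cdot C=mK_X^2$, so $n_i\leq mK_X^2$ and hence $-n_iC_i^2\leq 3mK_X^2$. Combining this with $m(K_X\cdot C_i)\leq m(K_X\cdot C)=m^2K_X^2$ gives $C_i\cdot C_j\leq(m^2+3m)K_X^2<p$, contradicting $C_i\cdot C_j\geq p$. Thus $P$ must be a fixed point of $D$. The main obstacle is precisely this last estimate: because $C_i^2$ may be negative, one cannot simply bound $C_i\cdot C_j$ by $C^2$, and it is the sharp control of $-n_iC_i^2$ through adjunction (via $p_a(\hat{C}_i)\geq 0$) together with ampleness of $K_X$ that forces the exact denominator $m^2+3m$ in the hypothesis.
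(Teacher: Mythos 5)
Your derivation of the upper bound $C_i\cdot C_j\leq(m^2+3m)K_X^2<p$ is essentially identical to the paper's: the same rearrangement $C_i\cdot C_j\leq mK_X\cdot C_i-n_iC_i^2$, the same adjunction estimate on the minimal resolution, and the same bounds $n_i\leq mK_X^2$, $K_X\cdot C_i\leq mK_X^2$. (Small slip there: $2p_a(\hat{C}_i)-2=K_{X^{\prime}}\cdot\hat{C}_i+\hat{C}_i^2$, not $K_X\cdot C_i+\hat{C}_i^2$; since $f$ is the minimal resolution one only has the inequality $K_{X^{\prime}}\cdot\hat{C}_i\leq K_X\cdot C_i$, which is all you actually use, so the conclusion $-C_i^2\leq 2+K_X\cdot C_i$ stands.) Where you genuinely diverge is in how $C_i\cdot C_j<p$ forces $P$ to be fixed. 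The paper argues purely locally with the derivation: $Q=(I+J)|_U$ is $\mathbf{m}_P$-primary with $\dim_kA/Q<p$, so a minimal power $a^{\nu_0}\in Q$ has $\nu_0<p$, and $D(a^{\nu_0})=\nu_0a^{\nu_0-1}Da\in Q$ together with primariness gives $Da\in\mathbf{m}_P$. You instead pass to the quotient $\pi\colon X\to Y$ and show that a smooth non-fixed intersection point would make the local intersection multiplicity at $P$, hence $C_i\cdot C_j$, at least $p$. That argument is sound ($\mathbb{Q}$-factoriality descends to $Y$, the exclusion of $C_i\subset\Delta$ is correct, and $\tilde{C}_i\cdot\tilde{C}_j\geq1$ since all local contributions of distinct prime divisors are nonnegative), but it requires $D^p=0$ or $D^p=D$ in order to form the quotient --- a hypothesis that is \emph{not} in the statement of the proposition. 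The paper adds $p$-closedness explicitly where it is needed (Propositions~\ref{sec1-prop1} and~\ref{sec1-prop4}) and deliberately omits it here; its local argument works for an arbitrary derivation. Since $D$ is $p$-closed in every application, your proof suffices for the paper's purposes, but as a proof of the proposition as stated it is slightly less general. What your route buys in exchange is the sharper geometric fact that at any smooth intersection point not fixed by $D$ the local intersection number of two integral curves is divisible by $p$, which is exactly the divisibility trick the paper uses elsewhere (e.g.\ in Proposition~\ref{sec11-prop-5}).
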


\begin{proof}
By Corollary~\ref{sec1-cor1}, $D(I_{C_i})\subset I_{C_i}$, for all $1\leq i \leq k$. The result is local at $P$. Let $U=\mathrm{Spec} A$ be an affine open subset of $X$ containing $P$ but no other point of $C_i \cap C_j$. Since $P\in X$ is a smooth point, $U$ may be taken to be smooth. Let $I$ and $J$ be the ideals of $C_i$ and $C_j$ respectively. Then $(I+J)|_U=Q$, with $r(Q)=\mathbf{m}_P$, the maximal ideal corresponding to the point of intersection $P$ of $C_i$ and $C_J$. Now since $D(I) \subset I$ and $D(J) \subset J$, it follows that $D(I+J)=D(I)+D(J)\subset I+J$. Hence $D(Q)\subset Q$. I will show that this implies that $D(\mathbf{m}_P)\subset \mathbf{m}_P$ and therefore $P$ is a fixed point of $D$.

In order to show that $D(\mathbf{m}_P)\subset \mathbf{m}_P$, I will first show that $C_i\cdot C_j <p$. Then if $I=(f)$ and $J=(g)$, $f,g \in A$, $\dim_k A/(f,g)< p$. Hence for any $a\in \mathbf{m}_P$, there exists $\nu<p$ such that $a^{\nu} \in Q=I+J$. Let ${\nu}_0<p$ be the smallest such $\nu$. Then $D(a^{\nu_0})=\nu_0a^{\nu_0-1}Da \in Q=I+J$. $Q$ is a primary ideal and $a^{\nu_0-1} \not\in Q$. Hence $(Da)^s \in Q \subset \mathbf{m}_P$, for some $s \geq 0$. Hence $Da \in \mathbf{m}_P$. Therefore $D(\mathbf{m}_P)\subset \mathbf{m}_P$, as claimed. 

It remains to show that $C_i \cdot C_j <p$. By definition, $mK_X \sim \sum_{s=1}^k n_s C_s$.  Let $1\leq i,j \leq k$. Then
\begin{gather}\label{sec1-prop3-eq1}
mK_X\cdot C_i=n_j C_i \cdot C_j +n_i C_i^2 +\sum_{s\not= i,j} n_s C_s \cdot C_i \geq n_j C_i \cdot C_j +n_iC_i^2.
\end{gather}
On the other hand, $mK_X^2=\sum_{s=1}^m n_s K_X\cdot C_s$ and since $K_X$ is ample, it follows that $K_X\cdot C_s>0$ for every $1\leq s \leq m$ and therefore $K_X\cdot C_s \leq n_s K_X \cdot C_s \leq mK_X^2$. Then from (\ref{sec1-prop3-eq1}) it follows that
\begin{gather}\label{sec1-prop3-eq2}
C_i\cdot C_j \leq m^2K_X^2 -n_iC_i^2.
\end{gather}
Next I will show that $-C_i^2\leq 2+K_X \cdot C_i$. Let $f \colon X^{\prime}\rightarrow X$ be the minimal resolution of $X$. Let $C_i^{\prime}=f_{\ast}^{-1}C_i$, be the birational transform of $C_i$ in $X^{\prime}$. Then by the adjunction formula for $C_i^{\prime}$ it follows that 
\begin{gather}\label{sec1-eq-1000}
-(C_i^{\prime})^2=-2p_a(C_i^{\prime})+2+K_{X^{\prime}}\cdot C_i^{\prime}\leq 2+K_{X^{\prime}}\cdot C_i^{\prime}.
\end{gather} 
Now there are adjunction formulas
\begin{gather}
f^{\ast}C_i=C^{\prime}_i+ E\\
K_{X^{\prime}}+F=f^{\ast}K_X \nonumber
\end{gather}
Where $E$ and $F$ are effective $f$-exceptional divisors ($F$ is effective because $f$ is the minimal resolution). From these immediately follows that $C_i^2\geq (C_i^{\prime})^2$ and $K_X\cdot C_i \geq K_{X^{\prime}}\cdot C_i^{\prime}$. From these and the equation (\ref{sec1-eq-1000}) it follows that
\begin{gather}\label{sec1-eq-8}
-C_i^2\leq 2+K_X\cdot C_i.
\end{gather}
Then from the equation (\ref{sec1-prop3-eq2}) it follows that 
\begin{gather}\label{sec1-prop3-eq3}
C_i\cdot C_j \leq m^2K_X^2+2n_i+n_iK_X\cdot C_i.
\end{gather}
But it has been shown earlier that $n_iK_X\cdot C_i \leq mK_X^2$ and hence $n_i \leq mK_X^2$ and $K_X\cdot C_i <mK_X^2$. Hence 
\begin{gather}\label{sec2-eq-1114}
C_i \cdot C_j \leq (m^2+3m)K_X^2<\frac{p(m^2+3m)}{m^2+3m}<p,
\end{gather}
as claimed. This concludes the proof.

\end{proof}
The proof of the previous proposition shows also the following.
\begin{corollary}\label{sec1-cor0}
Let $C_1$, $C_2$ be two different irreducible and reduced curves on $X$ such that $D(I_{C_i})\subset I_{C_i}$, for $i=1,2$. Assume that $C_1 \cdot  C_2 <p$. Then every point of intersection of $C_1$ and $C_2$ which is a smooth point of $X$ is a fixed point of $D$.
\end{corollary}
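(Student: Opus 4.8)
The plan is to observe that this is exactly the local core of the proof of Proposition~\ref{sec1-prop3}: in that proof the numerical hypothesis $K_X^2 < p/(m^2+3m)$ was used \emph{only} to force the inequality $C_i \cdot C_j < p$, after which the conclusion that $P$ is a fixed point was purely local. Since here we assume $C_1 \cdot C_2 < p$ outright, the argument applies verbatim. Because the assertion is local at $P$, I would first pass to an affine open neighborhood $U = \mathrm{Spec}\,A$ of $P$ that lies in the smooth locus of $X$ and contains no other point of $C_1 \cap C_2$. On $U$ the curves $C_1, C_2$ are Cartier, so their ideals are principal; write $I_{C_1}|_U = (f)$ and $I_{C_2}|_U = (g)$ with $f,g \in A$.

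Next I would set $Q = (f,g) = I_{C_1}|_U + I_{C_2}|_U$. By hypothesis $D$ stabilizes each of $I_{C_1}$ and $I_{C_2}$, hence stabilizes their sum, so $D(Q) \subset Q$. Since $P$ is the unique common point of $C_1$ and $C_2$ in $U$, the zero set $V(Q)$ is $\{P\}$, whence $\sqrt{Q} = \mathbf{m}_P$ (Nullstellensatz) and $Q$ is $\mathbf{m}_P$-primary. Moreover $\dim_k A/Q$ is the local intersection multiplicity $i(P; C_1, C_2)$, which is bounded above by the global intersection number $C_1 \cdot C_2 < p$.

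The computation then proceeds as in Proposition~\ref{sec1-prop3}. For any $a \in \mathbf{m}_P$, the element $a$ is nilpotent modulo $Q$ and $\dim_k A/Q < p$, so there is a smallest integer $\nu_0 < p$ with $a^{\nu_0} \in Q$. Applying $D$ and using $D(Q)\subset Q$ yields $\nu_0\, a^{\nu_0-1}Da = D(a^{\nu_0}) \in Q$. Since $\nu_0 < p$, the scalar $\nu_0$ is nonzero in $k$, so $a^{\nu_0-1}Da \in Q$; as $Q$ is $\mathbf{m}_P$-primary and $a^{\nu_0-1}\notin Q$ by minimality of $\nu_0$, it follows that $Da \in \sqrt{Q} = \mathbf{m}_P$. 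Thus $D(\mathbf{m}_P)\subset \mathbf{m}_P$, which is precisely the statement that $P$ is a fixed point of $D$.

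The only delicate point—and the reason the hypothesis is phrased as $C_1 \cdot C_2 < p$—is the cancellation of the factor $\nu_0$ in characteristic $p$: the bound $C_1 \cdot C_2 < p$ guarantees $\nu_0 < p$, hence $\nu_0 \not\equiv 0$ in $k$. Without this the congruence $\nu_0 \equiv 0$ could hold and the conclusion would fail. Everything else is standard commutative algebra, and no further reducedness or irreducibility hypotheses beyond those stated are needed.
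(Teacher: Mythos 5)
Your proposal is correct and is exactly the paper's proof: the paper derives this corollary by observing that the local argument in the proof of Proposition~\ref{sec1-prop3} uses the numerical hypothesis only to secure $C_i\cdot C_j<p$, and you have reproduced that local primary-ideal argument verbatim (including the key cancellation of $\nu_0\not\equiv 0$ in $k$). Nothing to add.
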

\begin{remark}
Proposition~\ref{sec1-prop3} and Corollary~\ref{sec1-cor0} apply in particular in the case when the singularities of $X$ are rational double points since they are $\mathbb{Q}$-factorial.
\end{remark}

As explained in Section~\ref{sec-1}, in general, in positive characteristic a vector field on a variety $Y$ does not fix its singular points. In section~\ref{sec-1} conditions were obtained which imply that a vector field on a surface fixes its singular points. The next proposition shows gives a condition which implies that a vector field on a curve fixes the singular points of the curve.

\begin{proposition}\label{sec1-prop4}
Let $D$ be a nontrivial vector field of either additive or multiplicative type on a smooth surface $X$ defined over an algebraically closed field $k$ of characteristic $p>0$. Let $C \subset X$ be a reduced and irreducible curve such that $D(I_C)\subset I_C$, where $I_C$ is the ideal sheaf of $C$ in $X$. Suppose that $\mathrm{p}_a(C)<(p-1)/2$. Then $D$ fixes every singular point of $C$ and lifts to the normalization 
$\bar{C}$ of $C$.
\end{proposition}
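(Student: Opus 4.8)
The plan is to restrict $D$ to $C$ and then analyse the induced derivation one branch at a time at each singular point, the hypothesis on $p_a(C)$ entering only through a bound on the conductor exponents. Since $C$ is reduced and irreducible and $D(I_C)\subset I_C$, the field $D$ induces a nontrivial derivation $D_C$ of $\mathcal{O}_C$, and as $D_C^p=(D^p)_C$ it is again of additive or of multiplicative type. Both assertions are local on $C$ and are trivially true away from $\mathrm{Sing}(C)$, so I fix a singular point $P$ and pass to $A=\widehat{\mathcal{O}}_{C,P}$, with normalization $\bar A=\prod_{i=1}^{b}\bar A_i$, each branch $\bar A_i\cong k[[t_i]]$ with maximal ideal $\mathfrak{m}_i=(t_i)$, and conductor $\mathfrak{c}=\prod_i(t_i^{c_i})$. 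First I would record the numerical input: from the genus formula $p_a(C)=g(\bar C)+\sum_{Q}\delta_Q$, the sum running over the singular points $Q$ of $C$, and the hypothesis $p_a(C)<(p-1)/2$, one gets $\delta_P<(p-1)/2$; because $C$ lies on the smooth surface $X$ it is locally a plane curve, hence Gorenstein, so $\dim_k\bar A/\mathfrak{c}=2\delta_P$ and therefore $\sum_i c_i=2\delta_P<p$. In particular each $c_i<p$, so each $c_i$ is prime to $p$.

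Next I would extend $D_C$ to a derivation $\tilde D$ of the total ring of fractions $\prod_i\mathrm{Frac}(\bar A_i)$ by the quotient rule. Since $\tilde D$ annihilates the idempotents it preserves each factor, so on the $i$-th branch $\tilde D=\phi_i\,\frac{d}{dt_i}$ for some $\phi_i\in\mathrm{Frac}(\bar A_i)$, and both claims of the proposition reduce to the single statement that $v_i(\phi_i)\ge 1$ for every $i$: this gives simultaneously $\tilde D(\bar A)\subset\bar A$, so that $D_C$ lifts to a vector field $D_{\bar C}$ on $\bar C$, and $\tilde D(\mathrm{rad}\,\bar A)\subset\mathrm{rad}\,\bar A$, whence $D_C(\mathfrak m_P)\subset\mathfrak m_P$ because $\mathfrak m_P=A\cap\mathrm{rad}\,\bar A$. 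To prove $v_i(\phi_i)\ge1$ I would test $\tilde D$ against the conductor: since $t_i^{c_i}\bar A_i\subset A$, all of $t_i^{c_i},t_i^{c_i+1},\dots$ lie in $A$, so their images $n\,t_i^{\,n-1}\phi_i$ lie in $A\subset\bar A_i$; the lowest exponents $c_i,c_i+1,\dots$ below $p$ are prime to $p$, so the scalars $n$ are units, and comparing these membership conditions in $A$ (not merely the valuation conditions in $\bar A_i$) against the gap structure of $A$ forces the principal part of $\phi_i$ to vanish, i.e. $\phi_i\in(t_i)$. In the multiplicative case one may instead argue directly: the $\mu_p$-action grades $A$, hence its integral closure $\bar A$ and the homogeneous ideal $\mathfrak m_P$ of the $G$-stable singular locus, so $\tilde D$ preserves both automatically and no numerical hypothesis is needed there.

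The technical heart is this last step, and the genuine danger in characteristic $p$ is that $d(t^p)=0$: if an exponent entering the comparison were divisible by $p$, the corresponding constraint would collapse and $\phi_i$ could acquire a pole. The inequality $2\delta_P<p$ is exactly what guarantees that all exponents up to the conductor are prime to $p$, so no constraint degenerates and $\tilde D$ is forced to be regular and to vanish at the branch points over $P$. I expect this coefficient bookkeeping, rather than any global input, to be the only real obstacle. Finally, the bound is sharp in a way worth recording: for $C\colon y^2=x^p$ the derivation $\partial/\partial x$ is of additive type, stabilizes $C$, has $g(\bar C)=0$ and $\delta=(p-1)/2$ so that $p_a(C)=(p-1)/2$, and its extension satisfies $\tilde D(t)=\tfrac12 t^{-1}$, which has a pole; thus it neither lifts to $\bar C$ nor fixes the singular point. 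This is precisely the borderline case ruled out by the strict inequality $p_a(C)<(p-1)/2$.
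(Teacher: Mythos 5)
Your overall strategy is genuinely different from the paper's and is attractive: you work intrinsically on $C$, reduce both conclusions to the single statement $v_i(\phi_i)\ge 1$ for the extension $\tilde D=\phi_i\,d/dt_i$ of $D_C$ to each branch of the normalization, and feed in the hypothesis only through $\sum_i c_i=2\delta_P<p$ (correctly using that $C$ is a plane curve, hence Gorenstein). The reduction itself is sound: $v_i(\phi_i)\ge 1$ does give both $\tilde D(\bar A)\subset\bar A$ and $D_C(\mathfrak m_P)\subset\mathfrak m_P$ via $\mathfrak m_P=A\cap\mathrm{rad}\,\bar A$, and your remark that the multiplicative case is automatic (everything canonical is $\mu_p$-homogeneous) is correct; the borderline example $y^2=x^p$ is also a good sanity check. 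For contrast, the paper argues on the ambient surface: it assumes $P$ is not fixed, passes to the quotient $\pi\colon X\to Y$, where $Q=\pi(P)$ is then smooth and the local equation of $\tilde C$ has the form $f(x^p,y)$; Weierstrass preparation puts it in the shape $y^m+h(x^p,y)$ with $h\in(x,y)^{p+1}$, and a blow-up count gives $2p_a(C)\ge sm(m-1)$ with $p=sm+r$, forcing $p_a(C)\ge(p-1)/2$. The lifting to $\bar C$ is then obtained by iterating blow-ups at fixed points, each step decreasing $p_a$.

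The problem is that the decisive step of your argument --- ``comparing these membership conditions in $A$ against the gap structure of $A$ forces the principal part of $\phi_i$ to vanish'' --- is asserted, not proved, and it is exactly there that all the characteristic-$p$ difficulty sits. Concretely: the conditions $t_i^{\,n-1}\phi_i\in A\cap\bar A_i$ for $n\ge c_i$, $p\nmid n$, translate into $n-1+v_i(\phi_i)$ lying in the value set of $A\cap\bar A_i$, and to exclude $v_i(\phi_i)\le 0$ you must (i) identify $A\cap\bar A_i$ (it is $t_i^{e_i}A_i$ with $A_i$ the local ring of the $i$-th branch, not just the conductor $t_i^{c_i}\bar A_i$, so its value set has internal gaps governed by the branch semigroup), (ii) invoke the symmetry of that value set (Gorenstein again) to know that $c_i-1$, and more generally $c_i-1-s$ for $s$ in the branch semigroup, are gaps, and (iii) deal with the fact that the test exponent you need to hit a given gap, namely $n=c_i-v_i(\phi_i)-s$, ranges up to $c_i-v_i(\phi_i)$, which can equal $p$ when the putative pole order is large --- so the single obvious test $n=c_i$ does not suffice and one must play several gaps against each other, using additionally that the multiplicity of the branch is prime to $p$. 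This bookkeeping can, I believe, be pushed through, but as written your proposal replaces the entire content of the proposition by an expectation; until it is carried out the argument is not a proof. I would suggest either completing the semigroup argument in full (treating unibranch and multibranch, and the exponent-divisible-by-$p$ loophole explicitly), or switching to the paper's route through the quotient surface, where the genus lower bound at a non-fixed singular point is obtained by a direct computation.
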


\begin{proof}
We may assume that $D(\mathcal{O}_X)\not\subset I_C$ and hence the restriction of $D$ on $C$ is not trivial (otherwise the result is obvious).

Let $\pi \colon X \rightarrow Y$ be the quotient of $X$ by the $\alpha_p$ or $\mu_p$ action on $X$ induced by $D$. Then $\pi$ is a purely inseparable morphism of degree $p$. Let $\tilde{C}=\pi(C)\subset Y$. Then $C=\pi^{\ast}\tilde{C}$ and $\pi_{\ast}C=p\tilde{C}$~\cite{RS76}. Let $P\in C$ be a singular point of $C$ and $Q=\pi(P)\in Y$. If $P$ is a fixed point of $D$ then there is nothing to prove. Suppose that $P$ is not a fixed point of $D$. Then $Q\in Y$ is a smooth point of $Y$~\cite{AA86}. Hence locally around $Q \in Y$, $X\rightarrow Y$ is an $\alpha_p$ or $\mu_p$ torsor and hence the same holds for $C\rightarrow \tilde{C}$. Consider cases with respect to whether $Q \in \tilde{C}$ is a singular or a smooth point of $C$.

\textbf{Case 1.} $Q\in \tilde{C}$ is singular. Then since $P\in X$ is not a fixed point of $D$, in suitable local analytic coordinates at $P$, $\mathcal{O}_X=k[[x,y]]$, $D=h(x,y) \partial/\partial x$ and $\mathcal{O}_Y=k[[x^p,y]]$~\cite[Theorem 1]{RS76}. Then $I_{\tilde{C}}=(f(x^p,y))$ and since it is assumed that $Q\in \tilde{C}$ is singular, $f(x^p,y)\in (x^p,y)^2$. Then $I_C=(f(x^p,y)) \subset k[[x,y]]$. Write $f(x^p,y)=\sum_i f_i(x^p)y^i$. Then either $m_P(f(x^p,y)) \geq p$ (considered in $k[[x,y]]$) or there exists an $m\geq 1$ such that $f_m(x^p)$ is a unit in $k[[x^p]]$. 

The first case is easily seen to be impossible since $C$ is assumed to have arithmetic genus less than $p$ and a curve of arithmetic genus less than $p$ cannot have a point of multiplicity bigger than $p$.

Suppose then that there exists an $m \geq 1$ such that $f_m(x^p)$ is a unit in $k[[x^p]]$. By using the Weierstrass preparation theorem in $k[[x^p,y]]$ it follows that 
\[
f(x^p,y)=u(x^p,y)[f_0(x^p)+f_1(x^p)y+\cdots + f_{m-1}(x^p)y^{m-1}+y^m],
\]
where $f_i(x^p)\in (x^p)$, for all $0\leq m-1$ and $u(x^p,y)$ is a unit in $k[[x^p,y]]$ and hence also in $k[[x,y]]$. In fact $m \geq 2$ since it assumed that $Q\in \tilde{C}$ is singular. Then $I_C=(y^m+h(x^p,y))$, where 
\[
h(x^p,y)=f_0(x^p)+f_1(x^p)y+\cdots + f_{m-1}(x^p)y^{m-1} \in (x,y)^{p+1}\subset k[[x,y]]
\]
and $m\geq 2$. Suppose that $m\geq p$. Then $m_P(C) \geq p$ and hence $p_a(C) \geq p$, which is impossible since by assumption 
$\mathrm{p}_a(C)\leq (p-1)/2$. Suppose that $m<p$.  Then write $p=sm+r$, $0<r<m$. After blowing up $P\in C$ and its infinitely near singular points $s$ times we see by using the adjunction formula that 
\begin{gather}\label{sec1-prop4-eq1}
2\mathrm{p}_a(C) \geq sm(m-1).
\end{gather}
Suppose that $m\geq (p+1)/2$. Then $m-1\geq (p-1)/2$ and hence  from the above inequality it follows that 
\[
\mathrm{p_a}(C)\geq \left(\frac{sm}{2}\right)\left(\frac{p-1}{2}\right)\geq \frac{p-1}{2},
\]
since $m\geq 2$. 

Suppose that $m < (p+1)/2$. Then also $r<m < (p+1)/2$. Then $p-r>(p-1)/2$ and hence
\begin{gather}\label{sec1-eq-111}
\mathrm{p}_a(C)\geq \frac{1}{2}sm(m-1)=(p-r)\frac{m-1}{2} \geq \left(\frac{p-1}{2}\right) \left(\frac{m-1}{2}\right).
\end{gather}
Suppose that $m\geq 3$. Then from the above inequality it follows that $\mathrm{p}_a(C)\geq (p-1)/2$. Suppose that $m=2$. Then  $s=(p-1)/2$ and $r=1$. Then from the equation~\ref{sec1-eq-111} it follows again that $\mathrm{p}_a(C)\geq (p-1)/2$.

\textbf{Case 2.} $Q\in \tilde{C}$ is smooth. Then  $C \rightarrow \tilde{C}$ is a $\mu_p$ or $\alpha_p$ torsor. Hence 
\[
\mathcal{O}_C=\frac{\mathcal{O}_{\tilde{C}}[t]}{(t^p-s)}
\]
where $s\in \mathcal{O}_{\tilde{C}}$. Let $x$ be local analytic coordinate of 
$\tilde{C}$ at $Q$. Then locally analytically at $Q\in \tilde{C}$, $\mathcal{O}_{\tilde{C}}=k[[x]]$ and $s=f(x)\in k[[x]]$. Moreover, since $P\in C$ is singular, $f(x)\in (x^2)$. Therefore
\[
\mathcal{O}_C=\frac{\mathcal{O}_{\tilde{C}}[t]}{(t^p-s)}=\frac{k[[x,t]]}{(t^p-f(x))}.
\]
Then one can write $f(x)=x^m u(x)$, where $u(x)$ is a unit in $k[[x]]$. If $m< p$ then $\sqrt[m]{u(x)}$ exists and therefore locally analytically at $P$, 
\[
\mathcal{O}_C\cong \frac{k[[x,y]]}{(t^p-x^m)}.
\]
If $p \leq m$ then since $k$ has characteristic $p$, the $\sqrt[m]{u(x)}$ does not always exist. But in this case $m_P(\mathcal{O}_{C,P})\geq p$ which is impossible since $\mathrm{p}_a(C)<p$. Hence $I_C=(t^p-x^m)$, $m\geq 2$. Then by using the same argument as in Case 1 it follows that $p_a(C)\geq (p-1)/2$, which is impossible.

Hence every singular point of $C$ is a fixed point of $D$. Hence $D$ lifts to a vector field $D^{\prime}$ on the blow up $X^{\prime}$ of $X$ at any singular point of $C$. Let $C^{\prime}$ be the birational transform of $C$ in $X^{\prime}$. Then $D^{\prime}(I_{C^{\prime}})\subset I_{C^{\prime}}$ and $p_a(C^{\prime})< p_a(C)$. Hence $D^{\prime}$ restricts to a vector field of $C^{\prime}$. Moreover, the previous arguments imply that the singular points of $C^{\prime}$ are fixed points of $D^{\prime}$. Hence this process can continue until a birational map $f \colon Y \rightarrow X$ is reached such that $Y$ and the birational transform $\bar{C}=f_{\ast}^{-1}C$ are smooth and $D$ lifts to a vector field $\bar{D}$ on $Y$ such that $\bar{D}(I_{\bar{C}})\subset I_{\bar{C}}$ and hence it induces a vector field on $\bar{C}$ lifting $D$.

\end{proof}

\begin{corollary}\label{sec1-cor2}
With assumptions as in Proposition~\ref{sec1-prop4}. Suppose in addition that $C$ is singular. Let $D_c$ be the vector field on $C$ induced by $D$. Suppose that $D_c\not=0$. Let $\bar{C}\rightarrow C$ be the normalization of $C$. Then $\bar{C}\cong \mathbb{P}^1_k$. Moreover
\begin{enumerate}
\item Suppose that $D^p=0$. Then $D$ has exactly one fixed point on $C$.
\item Suppose that $D^p=D$. Then $D$ has at most two distinct fixed points on $C$.
\end{enumerate}
In particular, $C$ is rational.

\end{corollary}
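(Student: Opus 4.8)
The plan is to reduce the statement to a question about a single nonzero vector field on a smooth projective curve and then analyse its zeros. By Proposition~\ref{sec1-prop4}, applied under the standing hypotheses, $D$ lifts to a vector field $\bar D$ on the smooth projective normalization $\bar C$ of $C$, restricting there to $D_c$. Since $D_c\neq 0$ we have $\bar D\neq 0$, and since $\bar D$ and $D_c$ agree with $D$ on the common function field $k(C)=k(\bar C)$, the relation $D^p=0$ (resp. $D^p=D$) passes to $\bar D$; thus $\bar D$ is a nonzero vector field on $\bar C$ of the same ($\alpha_p$- resp. $\mu_p$-) type. First I would record the two elementary facts I will use: $\bar D$ is a nonzero global section of $T_{\bar C}=\omega_{\bar C}^{-1}$, so its divisor of zeros has degree $\deg T_{\bar C}=2-2g(\bar C)\ge 0$, giving $g(\bar C)\le 1$; and the number of zeros of $\bar D$, counted with multiplicity, equals $2-2g(\bar C)$.

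The main step is to show that $\bar D$ actually has a zero, which then forces $g(\bar C)=0$. Fix a singular point $P\in C$; by Proposition~\ref{sec1-prop4} it is a fixed point of $D$, hence of $D_c$. I claim $\bar D$ vanishes at every point of $n^{-1}(P)$, where $n\colon\bar C\to C$ is the normalization. To see this I would pass to completions $A=\hat{\mathcal O}_{C,P}\subset \bar A=\prod_i k[[u_i]]$ (the product being over the branches at $P$) and use the conductor ideal $\mathfrak c=\{x\in\bar A:x\bar A\subseteq A\}=\prod_i(u_i^{c_i})$. Since $D_c$ preserves $A$ and $\bar D$ preserves $\bar A$, a short Leibniz computation shows that $\mathfrak c$ is $\bar D$-stable. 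Writing $\bar D=\psi_i\partial_{u_i}$ on the $i$-th branch, stability gives $c_i\psi_i u_i^{c_i-1}=\bar D(u_i^{c_i})\in(u_i^{c_i})$. The key point is that the hypothesis $p_a(C)<(p-1)/2$ controls the conductor: one has $\sum_i c_i\le 2\delta_P\le 2p_a(C)<p-1$ (with equality $\sum_i c_i=2\delta_P$ since $C\subset X$ has planar, hence Gorenstein, singularities), so each $c_i<p$, whence $c_i\neq 0$ in $k$ and the displayed relation forces $\psi_i\in(u_i)$. Thus $\bar D$ vanishes at each branch point over $P$, so $2-2g(\bar C)\ge 1$ and $g(\bar C)=0$, that is $\bar C\cong\mathbb P^1$.

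This is exactly the delicate point and I expect it to be the hardest part of the argument: in positive characteristic a vector field need \emph{not} vanish at the preimage of a singular point of an invariant curve (a ``wild'' cusp, such as $\partial_u$ acting on $k[[u^p,\ldots]]$, is a genuine obstruction), and it is precisely the bound $p_a(C)<(p-1)/2$ — the same bound underlying Proposition~\ref{sec1-prop4} — that rules this out by keeping the conductor exponents below $p$.

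It then remains to count the zeros and transfer them to $C$. On $\mathbb P^1$ the space $H^0(T_{\mathbb P^1})$ is the restricted Lie algebra $\mathfrak{sl}_2$ (here $p\ge 5$, since $C$ singular forces $p_a(C)\ge 1$ and hence $p>3$): its nonzero $p$-nilpotent elements are conjugate to $\partial_x$, and its elements with $\bar D^{[p]}=\bar D$ are conjugate to $x\partial_x$; equivalently, an $\alpha_p$-action on $\mathbb P^1$ has a single fixed point (a double zero) while a $\mu_p$-action has exactly two. So if $D^p=0$ then $\bar D$ has one zero, and if $D^p=D$ then $\bar D$ has two distinct zeros. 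Finally the zeros of $\bar D$ map under $n$ onto the fixed points of $D_c$, and by the conductor argument every preimage of a fixed point of $D_c$ is a zero of $\bar D$; hence the number of fixed points of $D_c$ on $C$ is at most the number of distinct zeros of $\bar D$. In the additive case this yields exactly one fixed point (the unique zero, which necessarily lies over the singular point $P$), and in the multiplicative case at most two, the two zeros being identified by $n$ when they lie over a common singular point of $C$. Since $\bar C\cong\mathbb P^1$, the curve $C$ is rational, which completes the proof.
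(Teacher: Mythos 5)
Your proof is correct and follows essentially the same route as the paper: lift $D$ to the normalization via Proposition~\ref{sec1-prop4}, force a zero of the lifted field over the singular point of $C$ by a local $p$-divisibility argument controlled by $p_a(C)<(p-1)/2$, conclude $\bar C\cong\mathbb{P}^1$, and then count fixed points according to whether the vector field on $\mathbb{P}^1$ is additive or multiplicative. The only (harmless) variation is in the local step: you show the conductor ideal is $\bar D$-stable and bound its exponents by $2\delta_P\le 2p_a(C)$, whereas the paper applies Lemma~\ref{sec11-lemma-1} to the scheme-theoretic fibre $\pi^{-1}(P)=\sum_i m_iQ_i$ and bounds the branch multiplicities $m_i$ by $p_a(C)$; both devices serve the identical purpose of keeping the relevant exponents prime to $p$.
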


\begin{proof}
By Proposition~\ref{sec1-prop4}, $D$ fixes the singular points of $C$ and the restriction $D_c$ of $D$ on $C$ lifts to a vector field 
$\bar{D}$ on the  normalization $\pi \colon \bar{C} \rightarrow C$ of $C$. Considering that smooth curves of arithmetic genus greater or equal than 2 do not have nontrivial global vector fields, it follows that 
$\mathrm{p}_a(\bar{C})\leq 1$.  

Suppose that $\bar{C}$ is an elliptic curve. In this case $T_{\bar{C}}=\mathcal{O}_{\bar{C}}$ and hence the unique global vector field of $\bar{C}$ has no fixed points. Let $P\in C$ be a singular point of $C$. Then by Proposition~\ref{sec1-prop4}, $P$ is a fixed point of $D$. Let also  $\pi^{-1}(P)=\sum_{i=1}^n m_iQ_i$, be the divisor in $\bar{C}$  corresponding to $\pi^{-1}(P)$. Then since $p_a(C)<(p-1)/2$, it follows that $m_i<p$, for all $i=1,\ldots, m$. Then by Lemma~\ref{sec11-lemma-1} it follows that every $Q_i$, $i=1,\ldots, n$, is a fixed point of $\bar{D}$. This a contradiction since $\bar{D}$ has no fixed points.

 Hence $\bar{C}=\mathbb{P}^1$. In this case $T_{\bar{C}}=\omega_{\mathbb{P}^1}^{-1}=\mathcal{O}_{\mathbb{P}^1}(2)$. Hence $\mathbb{P}^1$ has three linearly independent global vector fields $D_i$, $i=1,2,3$. These vector fields are induced from the homogeneous vector fields $D_1=x\frac{\partial}{\partial x}$, $D_2=x\frac{\partial}{\partial y}$ and $D_3=y\frac{\partial}{\partial x}$ of $k[x,y]$. Note that $D_1^p=D_1$ and $D_i^p=0$, $i=2,3$. Hence there are $a_i \in k$, $i=1,2,3$, such that $\bar{D}=a_1D_1+a_2D_2+a_3D_3$. 

\textbf{Claim:} $\bar{D}^p=\bar{D}$ if and only if $a_2=a_3=0$ and $a_1\in\mathbb{F}_p^{\ast}$, and $\bar{D}^p=0$ if and only if $a_1^2+4a_2a_3=0$. 

In order to show this restrict $\bar{D}$ to the standard affine cover of $\mathbb{P}^1$.

Let $U \subset \mathbb{P}^1$ be the open affine subset given by $y\not= 0$. Let $u=x/y$. Then an easy calculation shows that $D_1=u\frac{d}{du}$, $D_2=-u^2\frac{d}{du}$ and $D_3=\frac{d}{du}$. Therefore 
\[
\bar{D}=(-a_2u^2+a_1u+a_3)\frac{d}{du}
\]
in $U$. I will now show that this is additive if and only if $-a_2u^2+a_1u+a_3=0$ has either a double root or no roots and multiplicative if and only if $a_2=0$ and $a_1\in \mathbb{F}_p$. Suppose that the previous equation  has a double root, and hence $a_1^2+4a_2a_3=0$. Then after a linear automorphism of $k[u]$, $\bar{D}=au^2\frac{d}{du}$, $a\in k$. This can easily verified to be additive. Suppose on the other hand that $-a_2u^2+a_1u+a_3=0$ has either two distinct roots or only one simple root (hence $a_2=0$). Suppose that $a_2\not=0$ and hence it has two distinct roots. Then after a linear automorphism of $k[u]$, $\bar{D}=a(u^2+u)\frac{d}{du}$. Then an easy calculation shows that
\[
D^p(u^{p-1})=a^p(p-1)^p(u^p+u^{p-1})=-a^p(u^p+u^{p-1})\not= 0.
\]
Hence in this case $\bar{D}$ is neither additive or multiplicative. Hence $a_2=0$ and $\bar{D}=(a_1u+a_3)\frac{d}{du}$. Then $\bar{D}^p=a_1^{p-1}\bar{D}$. Hence $\bar{D}^p=\bar{D}$ if and only if $a_1^{p-1}=1$ and therefore if and only if $a_1\in\mathbb{F}_p$. 

Let $V$ be the affine open subset of $\mathbb{P}^1$ given by $x\not=0$. Let $v=y/x$. Then in $V$, $D_1=- v\frac{d}{dv}$, $D_2=\frac{d}{dv}$ and $D_3=-v^2\frac{d}{dv}$. Therefore
\[
\bar{D}=(-a_3v^2-a_1v+a_2)\frac{d}{dv}.
\]
Suppose that $\bar{D}$ is additive. Then similar arguments as before show that $a_1^2+4a_2a_3 =0$. Suppose that $\bar{D}$ is of multiplicative type. Then as before we get that $a_3=0$. This concludes the proof of the claim.

Suppose now that $\bar{D}$ is of multiplicative type. Then it has been shown that $\bar{D}=ax\frac{\partial}{\partial x}$, $a\in \mathbb{F}_p^{\ast}$. The fixed points of this are $[0,1]$ and $[1,0]$. In particular it has exactly two distinct fixed points. These points may be over different  points of $C$ or over the same. Hence $D$ has at most 2 fixed points on $C$ as claimed.

Suppose that $\bar{D}$ is of additive type. Then from the previous arguments it follows that $\bar{D}$ has a single fixed point. 

Hence if $D^p=D$, then $D$ has at most two distinct points and if $D^p=0$ then it has just one.
\end{proof}

\begin{proposition}\label{sec1-prop-8}
Let $X$ be a canonically polarized  surface over an algebraically closed field of characteristic $p>0$. Let $D$ be a nonzero vector field on $X$ such that either $D^p=0$ or $D^p=D$. Assume moreover that $D$ fixes the singular points of $X$ and that it lifts to the minimal resolution of $X$. Suppose that $p>(m^2+3m)K_X^2+3$. Then the linear system $|mK_X|$ does not contain a positive dimensional subsystem  whose members are stabilized by $D$.
\end{proposition}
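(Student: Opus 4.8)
The plan is to argue by contradiction: assume that $|mK_X|$ contains a positive dimensional subsystem $|V|$ all of whose members $C$ satisfy $D(I_C)\subset I_C$, and to produce a numerical contradiction from a divisibility of $m^2K_X^2$ by $p$. First I would translate the stabilization condition into an eigenvector condition. Since the singularities of $X$ are rational double points, $X$ is Gorenstein and $\omega_X$ is invertible, so $mK_X=\omega_X^{\otimes m}$. The vector field $D$ acts by Lie derivative on $\omega_X^{\otimes m}$ on the smooth locus, and this extends by normality to a $k$-linear endomorphism $D^{\ast}\colon H^0(X,mK_X)\to H^0(X,mK_X)$. A local computation on the smooth locus shows that if $Z(s)$ is stabilized by $D$, then, writing $s=g\cdot e$ in a local frame $e$ with $D(g)=hg$, one obtains $D^{\ast}(s)=(h+\rho)s$ with $h,\rho$ regular, so that $D^{\ast}(s)/s\in k(X)$ is regular on the smooth locus, hence everywhere by normality, hence a constant. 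Thus every stabilized member of $|mK_X|$ corresponds to an eigenvector of $D^{\ast}$. Choosing a two dimensional $W\subset H^0(X,mK_X)$ with $\mathbb{P}(W)\subset|V|$, every nonzero element of $W$ is an eigenvector of $D^{\ast}$, which forces a single eigenvalue $\lambda$ and $D^{\ast}|_W=\lambda\cdot\mathrm{id}$.

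Next I would exploit this invariance. Write $W=\langle s_0,s_1\rangle$ and $\phi=s_1/s_0\in k(X)$. From $D^{\ast}(s_i)=\lambda s_i$ and the Leibniz rule $D^{\ast}(\phi s_0)=D(\phi)s_0+\phi D^{\ast}(s_0)$ one gets $D(\phi)=0$, so $\phi$ lies in the field of invariants $k(X)^D=k(Y)$, where $\pi\colon X\to Y$ is the quotient by the $\alpha_p$ or $\mu_p$ action defined by $D$. Hence the rational map $g$ to $\mathbb{P}^1$ attached to the pencil factors as $g=\bar g\circ\pi$, and therefore every member $C_t$ of the pencil is $\pi^{\ast}\tilde C_t$, an integral curve of $D$, with $\tilde C_t=\bar g^{\ast}(\mathrm{pt})$ on $Y$. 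Taking two distinct members and using $C_t\sim mK_X$ together with the projection formula for the degree $p$ morphism $\pi$, I obtain
\[
m^2K_X^2=C_0\cdot C_1=\bigl(\pi^{\ast}\tilde C_0\bigr)\cdot\bigl(\pi^{\ast}\tilde C_1\bigr)=p\,(\tilde C_0\cdot\tilde C_1).
\]
If $\tilde C_0\cdot\tilde C_1$ is an integer, then $p\mid m^2K_X^2$; but $p>(m^2+3m)K_X^2+3$ gives $0<m^2K_X^2<p$, a contradiction, and the proof is complete.

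The main obstacle is exactly the integrality of $\tilde C_0\cdot\tilde C_1$, that is, showing that every local contribution $(C_0\cdot C_1)_P$ at a base point $P$ of the pencil is a multiple of $p$. Away from the finite bad set $Z$ consisting of the singular points of $X$ and the isolated singularities of $D$, the morphism $\pi$ is a flat $\alpha_p$ or $\mu_p$ torsor of degree $p$ and $\tilde C_i$ is Cartier at $\pi(P)$, whence $(C_0\cdot C_1)_P=p\,(\tilde C_0\cdot\tilde C_1)_{\pi(P)}\in p\mathbb{Z}$; the intersection points of distinct components contributing here are controlled by Corollary~\ref{sec1-cor0} and Proposition~\ref{sec1-prop3}, which apply since $C_i\cdot C_j\le(m^2+3m)K_X^2<p$. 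The delicate case is a base point lying at a point of $Z$. Here I would use that, by hypothesis, $D$ fixes the rational double points and lifts to the minimal resolution, and that the bound $p>(m^2+3m)K_X^2+3$ forces $p_a(C)=1+\tfrac12 m(m+1)K_X^2<(p-1)/2$ for an irreducible component $C$ of a member, so that Proposition~\ref{sec1-prop4} and Corollary~\ref{sec1-cor2} apply and each such component is a rational integral curve whose behaviour under $\pi$ is understood; this is what lets one either check that such base points also contribute multiples of $p$ or, equivalently, arrange the pencil inside $|V|$ so that its base locus avoids $Z$ entirely. Establishing this integrality rigorously, thereby controlling the base points sitting over the singularities of $X$ and $D$, is the technical heart of the argument, while the remaining steps are formal.
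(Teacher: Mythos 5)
Your proposal takes a genuinely different route from the paper, but it has a gap at its numerical core that I do not think can be repaired along the lines you indicate. The contradiction you aim for is $p\mid m^2K_X^2$, via $C_0\cdot C_1=p\,(\tilde C_0\cdot\tilde C_1)$ with $\tilde C_0\cdot\tilde C_1\in\mathbb{Z}$. You correctly identify the integrality of $\tilde C_0\cdot\tilde C_1$ as the obstacle, but both of the fixes you sketch fail. First, you cannot arrange the base locus of the pencil to avoid the bad set $Z$: by Corollary~\ref{sec1-cor0} (which you cite, but in the opposite direction), any point where two distinct stabilized components meet at a smooth point of $X$ with local multiplicity $<p$ is a \emph{fixed point} of $D$, so the isolated base points of a $D$-stabilized pencil are forced \emph{into} the bad locus, not away from it. Second, at a fixed point $P$ the local contribution $(C_0\cdot C_1)_P$ is genuinely not a multiple of $p$ in general: if $P$ is a smooth point of $X$ and two integral curves of $D$ meet transversally there, then $(C_0\cdot C_1)_P=1$, while $\pi(P)$ is a singular point of $Y$ and $(\tilde C_0\cdot\tilde C_1)_{\pi(P)}=1/p$; the map $\pi$ is not a torsor there and the push--pull bookkeeping you rely on breaks down exactly at these points. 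Since $C_0\cdot C_1=m^2K_X^2>0$, the pencil always has base points, and they are precisely of this bad type, so the congruence $m^2K_X^2\equiv 0\pmod p$ simply does not follow. There is also a secondary issue: the factorization $g=\bar g\circ\pi$ shows only that the \emph{moving} part of the pencil is a pullback; a fixed component lying in the divisorial part of $D$ with coefficient prime to $p$ satisfies $\pi^{\ast}\tilde\Gamma=p\Gamma$, so the identity $C_t=\pi^{\ast}\tilde C_t$ with integral $\tilde C_t$ is not automatic either.

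For comparison, the paper's proof avoids intersection-theoretic divisibility on $Y$ altogether. It first shows that every component of every member of the pencil not contained in the divisorial part of $D$ is a rational curve: the hypothesis $p>(m^2+3m)K_X^2+3$ bounds $p_a$ below $(p-1)/2$, Proposition~\ref{sec1-prop4} then lifts $D$ to the normalization, and a case analysis produces fixed points of $D$ on each component, excluding the elliptic case via Lemma~\ref{sec11-lemma-1}. It then removes fixed components, resolves the base locus, and takes the Stein factorization to obtain a fibration by rational curves. If the generic fiber were smooth, Tsen's theorem would make $X$ birationally ruled, contradicting general type; hence the generic fiber is singular, and Tate's genus-change theorem forces $p_a>(p-1)/2$, contradicting the bound $p_a\leq 1+\tfrac12(m^2+m)K_X^2<(p-1)/2$. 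Any rescue of your divisibility argument would require controlling the local structure of $\pi$ at every base point of the pencil, which is exactly the information that is unavailable at fixed points of $D$; I would recommend redirecting the second half of your argument toward the rationality-of-components and genus-change strategy.
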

\begin{proof}

Suppose that there exists a positive dimensional linear subsystem of $|mK_X|$, for some $m>0$, whose members are stabilized by $D$. Then take  $|V|\subset |mK_X|$  a one-dimensional linear subsystem whose members are stabilized by $D$.

\textbf{Claim:} Let $C\in |V|$ be any member of $|V|$ and let $C=\sum_{i=1}^sn_iC_i$ be its decomposition into its reduced and irreducible components. Then, if  $C_i$ is not a component of the divisorial part of $D$, $C_i$ is a rational curve, for all $i=1,\ldots, s$.

Indeed. From the assumptions of the proposition it follows that $K_X \cdot C <p$. Then, since $K_X$ is ample, it follows by Corollary~\ref{sec1-cor1} that every $C_i$ is stabilized by $D$, i.e., $D(I_{C_i})\subset I_{C_i}$, $i=1,\ldots, n$. Hence $D$ induces vector fields on every $C_i$, for all $i$. 

Suppose that $C_i$ is a component of $C$ which is not contained in the divisorial part of $D$. Then the restriction of $D$ on $C_i$ is not zero. Let $\pi_i \colon \bar{C}_i \rightarrow C_i$ be the normalization of $C_i$. I will show next that $D$ lifts to  $\bar{C}_i$.  

Let $f \colon X^{\prime} \rightarrow X$ be the minimal resolution of $X$. Let $C_i^{\prime}$ be the birational transform of $C_i$ in $X^{\prime}$. Then $C_i^{\prime}$ is stabilized by $D^{\prime}$ and therefore $D^{\prime}$ induces a nonzero  vector field on $C_i^{\prime}$. In order to show that $D$ lifts to $\bar{C}_i$ it suffices to show that $D^{\prime}$ lifts to the normalization of $C_i^{\prime}$, which is $\bar{C}_i$. This will be done by using Proposition~\ref{sec1-prop4}.

Since $X$ has canonical singularities, $K_{X^{\prime}}=f^{\ast}K_X$. Then, since $K_X$ is ample, 
\begin{gather}\label{sec2-eq-1111}
K_{X^{\prime}}\cdot C_i^{\prime} =f^{\ast} K_X \cdot C^{\prime}_i=K_X\cdot C_i \leq K_X \cdot C=mK_X^2<\frac{p-3}{m+3} 
\end{gather}
by the assumptions of the proposition. Moreover, since $K_{X^{\prime}}$ is nef and big, by the Hodge Index Theorem and the previous inequality, it follows that
\begin{gather}\label{sec2-eq-1112}
(C_i^{\prime})^2\leq \frac{(K_{X^{\prime}}\cdot C_i^{\prime})^2}{K_{X^{\prime}}^2}\leq \frac{m^2(K_X^2)^2}{K_X^2}=m^2K_X^2\leq \frac{m}{m+3}(p-3).
\end{gather}
Now from the equations (\ref{sec2-eq-1111}), (\ref{sec2-eq-1112}) it follows that 
\begin{equation}\label{sec1-eq-100}
p_a(C^{\prime}_i)=1+\frac{1}{2}((C^{\prime}_i)^2+K^{\prime}_X\cdot C^{\prime}_i)<1+\frac{1}{2}\cdot \frac{m+1}{m+3}(p-3)<\frac{p-1}{2},
\end{equation}
Therefore, by Proposition~\ref{sec1-prop4}, $D^{\prime}$ lifts to the normalization of $C_i^{\prime}$ and hence $D$ lifts to a vector field $\bar{D}$ on the normalization $\bar{C}_i$ of $C_i$. Considering that a smooth curve of genus greater or equal to 2 does not have any nontrivial global vector fields, it follows that $\bar{C}_i$ is either $\mathbb{P}^1$ or an elliptic curve. I will show that it is actually $\mathbb{P}^1$. 

Next I will show that there exist fixed points of $D^{\prime}$ on $C_i^{\prime}$. 

Consider cases with respect to whether $C^{\prime}=f^{\ast}C$ is reducible or not.

Suppose that $C^{\prime}$ is irreducible (and hence $C$ does not pass through any singular point of $X$). Then 
$C^{\prime}=n_iC_i^{\prime}$. In particular, 
\begin{gather}\label{sec2-eq-1113}
m^2K_X^2=n_i^2(C_i^{\prime})^2=n_i^2C_i^2.
\end{gather}

Suppose that $D^{\prime}$ has no fixed points on $C_i^{\prime}$. Let $\pi \colon X^{\prime} \rightarrow Y^{\prime}$ be the quotient of $X^{\prime}$ by the $\alpha_p$ or $\mu_p$ action induced on $X^{\prime}$ by $D^{\prime}$. Let $\hat{C}_i=\pi(C_i^{\prime})$. Then, by~\cite{AA86}, $\hat{C}_i$ is in the smooth part of $Y^{\prime}$ and by~\cite{RS76}, $\pi^{\ast}\hat{C}_i=C^{\prime}_i$.  Hence
\[
(C^{\prime}_i)^2=(\pi^{\ast}\hat{C}_i)^2=p\hat{C}_i^2=\lambda p,
\]
for some $\lambda \in \mathbb{Z}$. Then from (\ref{sec2-eq-1113}) it follows that $m^2K_X^2=\lambda n_i^2p$. Since $K_X^2 >0$, then $\lambda >0$ and hence $K_X^2 >p$, which is a contradiction from the assumptions. Hence in this case there are fixed points of $D^{\prime}$ on $C_i^{\prime}$.

Suppose that $C^{\prime}$ has at least two components. Since $K_X$ is ample, $C$ and hence $C^{\prime}$ is connected. Hence $C_i^{\prime}$ intersects another component $B$ of $C^{\prime}$. If $B$ is contained in the divisorial part of $D^{\prime}$ then the intersection points $C_i^{\prime}\cap B$ are fixed points of $D^{\prime}$. Hence in this case there are fixed points of $D^{\prime}$ on $C_i^{\prime}$. Suppose that $B$ is not in the divisorial part of $D^{\prime}$. There are now two possibilities. $B$ is not $f$-exceptional or $B$ is $f$-exceptional. 

Suppose that $B$ is not $f$-exceptional. Then $B=C_j^{\prime}$, the birational transform in $X^{\prime}$ of a component $C_j$ of $C$, $j\not= i$. 
 But now from the equation (\ref{sec2-eq-1114}) in the proof of Proposition~\ref{sec1-prop3} it follows that
\[
C_i^{\prime} \cdot C_j^{\prime}\leq C_i \cdot C_j<p.
\]
Then from Corollary~\ref{sec1-cor0} it follows that the points of intersection $C_i^{\prime}\cap C_j^{\prime}$ are fixed points of $D^{\prime}$. Again then there are fixed points of $D^{\prime}$ on $C_i^{\prime}$.

Suppose finally that $B$ is $f$-exceptional. I will show that $B \cdot C_i^{\prime} < p$ and hence again from Corollary~\ref{sec1-cor0} the points of intersection $C_i^{\prime}\cap B$ are fixed points of $D^{\prime}$.

From the adjunction formula for $C_i^{\prime}$ it follows that
\[
(C_i^{\prime})^2\geq -2-K_{X^{\prime}}\cdot C_i^{\prime}=-2-K_X\cdot C_i \geq -2-\frac{1}{n_i}K_X \cdot C=-2-\frac{m}{n_i}K_X^2.
\]
Then
\[
C^{\prime}=f^{\ast} C=\sum_{r=1}^sn_sC_s^{\prime} +bB+E,
\]
where $b>0$ is an integer and $E$ is an effective $f$-exceptional divisor. Then
\[
m^2K_X^2\geq C\cdot C_i =f^{\ast}C \cdot C_i^{\prime}\geq n_i(C^{\prime}_i)^2 +bB\cdot C^{\prime}_i  \geq -2n_i -mK_X^2+b B\cdot C^{\prime}_i
\]
Therefore
\begin{gather}\label{sec2-eq-1115}
bB\cdot C^{\prime}_i \leq m^2K_X^2+2n_i+mK_X^2
\end{gather}
Now since $C\in |mK_X|$ and $K_X$ is ample it follows that $n_i \leq mK_X^2$. Then the previous equation becomes
\[
bB\cdot C^{\prime}_i \leq m^2K_X^2+2mK_X^2+mK_X^2=(m^2+3m)K_X^2<p,
\]
by the assumptions. Hence $b B\cdot C^{\prime}_i <p$, and in particular $B \cdot C_i^{\prime} <p$. Therefore, from Corollary~\ref{sec1-cor0} the points of intersection $C_i^{\prime}\cap B$ are fixed points of $D^{\prime}$. 

Therefore there are fixed points of $D^{\prime}$ on $C_i^{\prime}$. Let $P \in C_i^{\prime}$ be a fixed point of $D^{\prime}$. Let $\pi^{-1}(P) =\sum_{i=1}^m n_iQ_i$. Then since $p_a(C^{\prime}_i)<p$, it follows that $n_i<p$, $i=1,\ldots ,m$. Then by Lemma~\ref{sec11-lemma-1}, every $Q_i$ is a fixed point of $\bar{D}_i$. Hence $\bar{D}_i$ has fixed points. Therefore $\bar{C}_i\cong \mathbb{P}^1$ since vector fields on an elliptic curve do not have fixed points. This concludes the proof of the claim.

Let $|V^{\prime}|$ be the linear system which is obtained from $|V|$ by removing the base components. Hence $|V^{\prime}|$ has only isolated base points. Let  $\phi \colon X \dasharrow \mathbb{P}^1$ be the rational map defined by $|V^{\prime}|$. Consider now the following commutative diagram
\[
\xymatrix{
W \ar[r]^{h}\ar[dr]^{\psi}\ar[d]^{g} & B\ar[d]^{\sigma} \\
X \ar@{-->}[r]^{\phi}  & \mathbb{P}^1
}
\]
Where $g$ is the resolution of base points of $|V^{\prime}|$, $\psi$ the corresponding morphism, and $h$, $\sigma$ is the Stein factorization of $\psi$. Then $h$ is a fibration and its generic fiber is an integral normal (and hence regular) curve~\cite[Page 91]{Ba01}. Moreover, by the construction of $h$, the general fiber is the birational transform in $W$ of an irreducible component of a general member of $|V^{\prime}|$. Therefore it is a rational curve. 

Suppose that the general fiber of $h$  is smooth. Therefore the general fiber of $h$ is isomorphic to $\mathbb{P}^1$. Then the generic fiber is also a smooth curve of genus zero over $K(B)$, where $K(B)$ is the function field of $B$. Hence it is isomorphic to a smooth conic in $\mathbb{P}^2_{K(B)}$. Then by Tsen's Theorem this conic has a $K(B)$-point and therefore the generic fiber is actually isomorphic to 
$\mathbb{P}^1_{K(B)}$. Therefore, $X$, and hence $X^{\prime}$, is birational to $B \times \mathbb{P}^1$, i.e., is birationally ruled. But then this implies  that $X^{\prime}$ has Kodaira dimension $-1$, which is a contradiction. 

Hence every fiber of $h$ is singular and therefore the generic fiber is singular too. Then by Tate's Theorem~\cite{Ta52},~\cite{Sch09}, 
$(p-1)/2 < p_a(W_g)$, where $W_g$ is the general fiber of $h$. But since the general  fiber of $h$ is the birational transform of a component $C_i$ of a general member $C$ of $|V^{\prime}|$, it follows from the equation~\ref{sec1-eq-100} that
\[
p_a(W_g)\leq p_a(C)\leq 1+\frac{1}{2}(m+m^2)K_X^2 <(p-1)/2,
\]
a contradiction. Hence $|mK_X|$ contains at most finitely many integral curves of $D$. 

\end{proof}

\begin{corollary}\label{sec1-cor-8}
Let $X$ be a canonically polarized surface over an algebraically closed  field of characteristic $p>0$. Let $D$ be a nontrivial global vector field on $X$ such that $D^p=0$ or $D^p=D$. Suppose that 
\begin{enumerate}
\item $p>\mathrm{max}\{56,m^2+3m+3\}$, if $K_X^2=1$,
\item $
p>\mathrm{max}\{12\chi(\mathcal{O}_X)+11K_X^2+1,(m^2+3m)K_X^2+3\}$,
if $K_X^2 \geq 2$.
\end{enumerate}
Then the linear system $|mK_X|$ does not contain a positive dimensional subsystem  whose members are stabilized by $D$. 

Moreover, suppose that $D$ has only isolated singularities. Let $\pi \colon X \rightarrow Y$ be the quotient of $X$ by the $\alpha_p$ or $\mu_p$ action induced by $D$.  Then 
$h^0(\mathcal{O}_Y(mK_Y))\leq 1$.
\end{corollary}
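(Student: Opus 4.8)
The plan is to obtain the first assertion by feeding the numerical hypotheses into Theorem~\ref{sec11-th-2} and Proposition~\ref{sec1-prop-8}, and then to deduce the bound on $h^0(\mathcal{O}_Y(mK_Y))$ from the first assertion via Proposition~\ref{sec1-prop1}. First I would verify that the stated bounds on $p$ supply all the hypotheses of these results. In both cases $K_X^2=1$ and $K_X^2\geq 2$ the bound on $p$ dominates the quantity occurring in Theorem~\ref{sec11-th-2} (namely $56$ when $K_X^2=1$ and $12\chi(\mathcal{O}_X)+11K_X^2+1$ when $K_X^2\geq 2$), while for $m\geq 1$ the bound $p>(m^2+3m)K_X^2+3\geq 4K_X^2+3$ forces $p>K_X^2$ and $p>7>5$, so the auxiliary requirements $p>5$ and $p\nmid K_X^2$ of Theorem~\ref{sec11-th-1} hold automatically. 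Theorem~\ref{sec11-th-2} then shows that $D$ fixes every singular point of $X$ and lifts to the minimal resolution, and with these two properties together with $p>(m^2+3m)K_X^2+3$ in hand, Proposition~\ref{sec1-prop-8} yields the first assertion verbatim.

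For the second assertion I would use the isolated-singularity hypothesis to conclude that the divisorial part $\Delta$ of $D$ vanishes. By the ramification formula $K_X=\pi^{\ast}K_Y+(p-1)\Delta$ of~\cite{RS76} (as used in the proof of Proposition~\ref{sec11-prop-5}) this forces $K_X=\pi^{\ast}K_Y$, so taking $L=\mathcal{O}_Y(mK_Y)$ in Proposition~\ref{sec1-prop1} the reflexive pullback $M=(\pi^{\ast}L)^{[1]}$ is exactly $\mathcal{O}_X(mK_X)$. Hence $H^0(X,M)=H^0(X,\mathcal{O}_X(mK_X))$, whose projectivization is $|mK_X|$, and Proposition~\ref{sec1-prop1} provides a $k$-linear operator $D^{\ast}$ on this space with $\mathrm{Ker}(D^{\ast})=H^0(Y,\mathcal{O}_Y(mK_Y))$.

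The argument then closes by contradiction. Any nonzero $s\in\mathrm{Ker}(D^{\ast})$ is an eigenvector of $D^{\ast}$ with eigenvalue $0$, so by Proposition~\ref{sec1-prop1}(3) the divisor $Z(s)\in|mK_X|$ is stabilized by $D$; consequently $\mathbb{P}(\mathrm{Ker}(D^{\ast}))$ is a linear subsystem of $|mK_X|$ of projective dimension $h^0(\mathcal{O}_Y(mK_Y))-1$ all of whose members are stabilized by $D$. Were $h^0(\mathcal{O}_Y(mK_Y))\geq 2$, this subsystem would be positive dimensional, contradicting the first assertion; hence $h^0(\mathcal{O}_Y(mK_Y))\leq 1$. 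I expect the only genuine subtlety to be the identification $M=\mathcal{O}_X(mK_X)$, which rests on the vanishing $\Delta=0$ coming from the isolated-singularity hypothesis together with the ramification formula, plus the bookkeeping that matches members of $|mK_X|$ stabilized by $D$ with eigenvectors of $D^{\ast}$, and in particular the $0$-eigenspace with $H^0(Y,\mathcal{O}_Y(mK_Y))$; once these are in place the deduction is formal.
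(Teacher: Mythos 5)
Your proposal is correct and follows essentially the same route as the paper: Theorem~\ref{sec11-th-2} supplies the lifting/fixed-point hypotheses for Proposition~\ref{sec1-prop-8}, which gives the first assertion, and for the second assertion the paper likewise uses $\Delta=0$, $K_X=\pi^{\ast}K_Y$, and the observation that $|\pi^{\ast}(mK_Y)|$ would be a positive dimensional subsystem of $|mK_X|$ stabilized by $D$. Your more explicit bookkeeping via Proposition~\ref{sec1-prop1} (identifying $H^0(Y,\mathcal{O}_Y(mK_Y))$ with $\mathrm{Ker}(D^{\ast})$) is just a spelled-out version of the same argument.
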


\begin{proof}
From Theorem~\ref{sec11-th-2} it follows that $D$ lifts to the minimal resolution of $X$. Then from Proposition~\ref{sec1-prop-8} it follows that $|mK_X|$ does not contain a positive dimensional subsystem  whose members are stabilized by $D$.

Suppose now that $D$ has only isolated singularities. Then $K_X=\pi^{\ast}K_Y$. If $h^0(\mathcal{O}_Y(mK_Y))\geq 2$, then $|\pi^{\ast}(mK_Y)|$ gives a positive dimensional subsystem of $|mK_X|$ which consists of integral curves of $D$. But by Proposition~\ref{sec1-prop-8} this is impossible.
\end{proof}

The next two results will also be needed in the proofs of the main results of this paper.


\begin{proposition}\label{sec1-prop-6}
Let $f \colon Y \rightarrow X$ be a composition of $n$ blow ups starting from a smooth point $P \in X$ of a surface $X$. Let $C \subset X$ be an integral curve in $X$ passing through $P$ and let $m=m_Q(C)$ be the multiplicity of $C$ at $P\in C$. Then
\[
mK_Y-f^{\ast}C+C^{\prime}=mf^{\ast}K_X+\sum_{k=1}^n(km-a_1-a_2-\ldots -a_k)E_k,
\]
where $E_i$, $1\leq i \leq n$ are the $f$-exceptional curves, $C^{\prime}$ is the birational transform of $C$ in $Y$  and $0\leq a_i \leq m$, are nonnegative integers.
\end{proposition}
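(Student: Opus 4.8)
The plan is to compute $K_Y$ and $f^\ast C$ separately in terms of the exceptional curves and then form the combination in the statement. Write $f=\pi_n\circ\cdots\circ\pi_1$, where $\pi_k\colon X_k\to X_{k-1}$ is the blow up of a point $Q_{k-1}\in X_{k-1}$ (with $X_0=X$, $Q_0=P$, $X_n=Y$), let $\epsilon_k$ be the exceptional curve of $\pi_k$, and let $C_{k-1}$ be the strict transform of $C$ in $X_{k-1}$. Put $a_k=m_{Q_{k-1}}(C_{k-1})$, so that $a_1=m_P(C)=m$. Writing $T_k$ for the total transform in $Y$ of $\epsilon_k$ and pulling the elementary blow up identities $K_{X_k}=\pi_k^\ast K_{X_{k-1}}+\epsilon_k$ and $\pi_k^\ast C_{k-1}=C_k+a_k\epsilon_k$ all the way up to $Y$, I would first record
\[
K_Y=f^\ast K_X+\sum_{k=1}^n T_k,\qquad f^\ast C=C'+\sum_{k=1}^n a_k T_k .
\]
Subtracting, the terms $C'$ cancel and the total transforms combine to give the clean identity
\[
mK_Y-f^\ast C+C'=mf^\ast K_X+\sum_{k=1}^n (m-a_k)T_k .
\]

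The second step is to pass from the total transforms $T_k$ to the exceptional curves $E_k$ in the statement. Since the blow ups form a chain issuing from $P$, the exceptional curves meet in a linear configuration $E_1,\dots,E_n$, and the total transform of $\epsilon_k$ is the full preimage of $\epsilon_k$ under the subsequent blow ups; a direct check gives $T_k=E_k+E_{k+1}+\cdots+E_n$, equivalently $E_k=T_k-T_{k+1}$ with $T_{n+1}=0$. Substituting $T_k=\sum_{j\ge k}E_j$ into the identity above and collecting the coefficient of each $E_j$ converts $\sum_k (m-a_k)T_k$ into
\[
\sum_{j=1}^n\Big(\sum_{k=1}^{j}(m-a_k)\Big)E_j=\sum_{j=1}^n\big(jm-a_1-\cdots-a_j\big)E_j ,
\]
which is precisely the asserted formula. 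The same conclusion can be reached by induction on $n$: factoring off the last blow up $\pi_n$ and applying the formula already proved on $X_{n-1}$, one substitutes $\pi_n^\ast K_{X_{n-1}}=K_Y-E_n$, $\pi_n^\ast C_{n-1}=C'+a_nE_n$ and $\pi_n^\ast\widetilde E_k=E_k+\eta_kE_n$, where $\eta_k$ is $1$ or $0$ according as $Q_{n-1}$ lies on the strict transform $\widetilde E_k$ or not; in the chain only $\eta_{n-1}=1$, and the coefficient of $E_n$ then telescopes to $nm-a_1-\cdots-a_n$.

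It remains to verify $0\le a_i\le m$. Each $a_i$ is a multiplicity, hence nonnegative, and $a_1=m$. For $i\ge2$ the centre $Q_{i-1}$ lies on $\epsilon_{i-1}$, which is smooth, so
\[
a_i=m_{Q_{i-1}}(C_{i-1})\le (C_{i-1}\cdot\epsilon_{i-1})_{Q_{i-1}}\le C_{i-1}\cdot\epsilon_{i-1}=a_{i-1}\le m,
\]
since the strict transform of a curve of multiplicity $a_{i-1}$ meets the corresponding exceptional curve in $a_{i-1}$ points; thus the multiplicities form a nonincreasing sequence bounded by $m$. The step I expect to be the main obstacle is the passage $T_k=E_k+\cdots+E_n$: this encodes the proximity structure of the infinitely near points, and it is exactly the hypothesis that the blow ups form a chain emanating from $P$ (each new centre lying on the most recently created exceptional curve) that forces the coefficients to telescope into the stated form, rather than into a more complicated expression governed by the proximity matrix.
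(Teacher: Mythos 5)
The paper gives no proof of this proposition (it is dismissed as ``a simple induction on the number of blow ups $n$''), so there is nothing to compare against line by line; your two routes --- subtracting the standard pullback identities $K_Y=f^{\ast}K_X+\sum_kT_k$ and $f^{\ast}C=C^{\prime}+\sum_ka_kT_k$, or inducting on the last blow up --- are the natural ones, and the monotonicity argument $m=a_1\geq a_2\geq\cdots\geq 0$ via $a_i\leq (C_{i-1}\cdot\epsilon_{i-1})_{Q_{i-1}}\leq a_{i-1}$ is correct along a chain.

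The gap is the step $T_k=E_k+E_{k+1}+\cdots+E_n$. That identity holds precisely when every centre is a \emph{free} infinitely near point, i.e.\ lies on the most recently created exceptional curve and on no strict transform of an earlier one. Your hypothesis ``each new centre lying on the most recently created exceptional curve'' does not exclude \emph{satellite} centres, which lie on $\epsilon_j$ \emph{and} on the strict transform of some older $\epsilon_i$; there $\pi_{j+1}^{\ast}$ contributes $2\epsilon_{j+1}$ to $T_i$ rather than $\epsilon_{j+1}$, the telescoping breaks, and with $a_k$ taken to be the multiplicities the stated coefficient is wrong. Concretely, for $C\colon y^4=x^7$ the fourth centre is a satellite proximate to $Q_1$ and $Q_2$, the multiplicity sequence is $(4,3,1,1)$, and the coefficient of $E_4$ in $mK_Y-f^{\ast}C+C^{\prime}-mf^{\ast}K_X$ is $4\cdot 7-20=8$, not $4\cdot 4-4-3-1-1=7$; the proposition survives only because one is free to take $a_4=0$ in place of the multiplicity $1$. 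Since satellite points are unavoidable exactly where the proposition is invoked in Section~\ref{sec-5} (embedded resolution of the singular points of $\hat F_1$, e.g.\ a cusp), the proof as written does not cover the cases that matter. What does go through in general, and is all the paper uses, is nonnegativity of the coefficients: writing $d_j$ for the coefficient of $E_j$ and $P(j)$ for the set of $l$ with $Q_{j-1}$ proximate to $Q_{l-1}$, the proximity recursions for discrepancies and multiplicities give $d_j=(m-a_j)+\sum_{l\in P(j)}d_l$, whence $d_j\geq 0$ by induction for an arbitrary configuration of centres; recasting $d_j$ as $jm-a_1-\cdots-a_j$ with $0\leq a_i\leq m$ additionally needs $0\leq d_j-d_{j-1}\leq m$, which your telescoping yields only in the satellite-free case and which in fact can fail for some orderings of the centres when several points of one exceptional curve are blown up.
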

The proof of the proposition is by a simple induction on the number of blow ups $n$ and is omitted.

\begin{proposition}\label{sec1-prop-7}
Let $P\in S$ be a Duval singularity and let $C\subset S$ be a smooth curve such that $P\in S$. Let $f \colon S^{\prime}\rightarrow S$ be the minimal resolution of $P\in S$,  and $E_i$, $i=1,\dots, n$ be the $f$-exceptional curves. Let $C^{\prime}$ be the birational transform of $C$ in $S^{\prime}$ and $a_i>0$, $1\leq i \leq n$ be positive rational numbers such that
\[
f^{\ast}C=C^{\prime}+\sum_{i=1}^na_i E_i.
\]
Then
\begin{enumerate}
\item Suppose that $P\in S$ is of type $A_n$. Then $(n+1)C$ is Cartier in $S$ and $(n+1)a_i$ are positive integers $\leq n$, $i=1,\dots, n$.
\item Suppose that $P\in S$ is of type $D_n$. Then $4C$ is Cartier in $S$ and $4a_i$are integers $\leq n$, $i=1,\ldots, n$.
\item Suppose that $P\in S$ is of type $E_6$. Then $3C$ is Cartier in $S$ and $3a_i$are integers $\leq 6$, $i=1,\ldots, 6$.
\item Suppose that $P\in S$ is of type $E_7$. Then $2C$ is Cartier in $S$ and $2a_i$are integers $\leq 7$, $i=1,\ldots, 7$.
\end{enumerate}
\end{proposition}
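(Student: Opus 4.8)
The plan is to turn the problem into linear algebra on the minimal resolution and then to read off the answer from the inverse Cartan matrix of the corresponding ADE root system. Since $P\in S$ is a rational double point, $S$ is $\mathbb{Q}$-factorial, so $C$ is $\mathbb{Q}$-Cartier and $f^{\ast}C$ is a well-defined $\mathbb{Q}$-divisor. Writing $f^{\ast}C=C'+\sum_{i=1}^{n}a_iE_i$, the coefficients $a_i$ are determined by the $n$ conditions $f^{\ast}C\cdot E_j=0$, which hold because $E_j$ is $f$-contracted, so that $f^{\ast}C\cdot E_j=C\cdot f_{\ast}E_j=0$. Setting $b_j=C'\cdot E_j\geq 0$ and letting $M=(E_i\cdot E_j)$ be the intersection matrix, these read $(-M)\mathbf{a}=\mathbf{b}$. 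Now $E_i^2=-2$ and $E_i\cdot E_j\in\{0,1\}$ for $i\neq j$, so $-M$ is exactly the Cartan matrix of the root system of the same type as $P$; hence $\det(-M)=n+1,4,3,2$ for $A_n,D_n,E_6,E_7$, and $\mathbf{a}=(-M)^{-1}\mathbf{b}$.

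Integrality, positivity and the Cartier index then follow at once. By Cramer's rule $\det(-M)\,(-M)^{-1}=\mathrm{adj}(-M)$ has integer entries, so $\det(-M)\,a_i=\big(\mathrm{adj}(-M)\,\mathbf{b}\big)_i\in\mathbb{Z}$, which is the claim that $(n+1)a_i$, $4a_i$, $3a_i$, $2a_i$ are integers. Because $C$ passes through $P$ we have $\mathbf{b}\neq 0$, and since the inverse Cartan matrix of an indecomposable ADE system is entrywise positive, $\mathbf{a}>0$; thus these integers are positive. For the Cartier statement, the local class group $\mathrm{Cl}(\mathcal{O}_{S,P})$ is the discriminant group $\mathbb{Z}^{n}/M\mathbb{Z}^{n}$, of order $\det(-M)$, so $\det(-M)[C]=0$ and $\det(-M)C$ is Cartier; equivalently, for a rational double point a Weil divisor is Cartier exactly when its pullback coefficients are integral, and we have just seen $\det(-M)a_i\in\mathbb{Z}$.

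It remains to establish the upper bounds $\det(-M)a_i\leq n$ (respectively $6,7$), and this is where smoothness of $C$ is used and where the \textbf{main obstacle} lies. Smoothness constrains $\mathbf{b}$: since $f|_{C'}\colon C'\to C$ is an isomorphism, $C'$ meets $f^{-1}(P)=\bigcup_iE_i$ only in the single point $q$ lying over $P$, so $\mathbf{b}$ is supported at $q$; one expects, and must verify, that $m_P(C)=1$ forces this to be a transversal intersection with a single component, giving $\mathbf{b}=e_{j_0}$ and hence $\det(-M)a_i=\mathrm{adj}(-M)_{i,j_0}$, a single column of the adjugate. The delicate point is to determine which components $E_{j_0}$ can actually occur, i.e. for which $j_0$ the contraction $f_{\ast}C'$ of a transversal curve remains smooth at $P$; this is strictly finer than the combinatorics of the dual graph. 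For example, in type $D_n$ the two spinor end components and the vector end component all have mark one, yet a transversal curve through a spinor component pushes down to a cuspidal (hence singular) curve, and the corresponding column already gives $\det(-M)a_i=6>n$ for $D_5$. I would therefore carry out, type by type and using Artin's explicit normal forms for the rational double points together with their resolutions, the identification of the components carrying smooth curves — the two ends of the chain for $A_n$, the vector end for $D_n$, the minuscule ends for $E_6,E_7$ — and then read off the bound directly from the adjugate: the admissible columns give maxima $n$ for $A_n$ (attained, the column being $(n,n-1,\dots,1)$), $4\leq n$ for $D_n$, $6$ for $E_6$ (attained at the trivalent node), and $6\leq 7$ for $E_7$. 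Showing that no other, "unsafe", component supports a smooth branch — equivalently that the only smooth curves are those whose image under contraction stays smooth — is the step I expect to require the most care.
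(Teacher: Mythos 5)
Your first two paragraphs carry out exactly the computation the paper has in mind (the paper omits the proof, describing it only as ``a straightforward computation of the coefficients $a_i$ depending on the type of the singularity and the position of $C^{\prime}$ in the dual graph''): the relations $f^{\ast}C\cdot E_j=0$ give $(-M)\mathbf{a}=\mathbf{b}$ with $-M$ the ADE Cartan matrix, and Cramer's rule together with the positivity of the inverse Cartan matrix yields the integrality and positivity of $\det(-M)a_i$ and the Cartier index. That part is correct and complete. The genuine gap is in the third paragraph: the upper bounds are never proved. You correctly observe that they reduce to knowing \emph{which} component $E_{j_0}$ the strict transform of a smooth curve can meet, you conjecture an answer (end components for $A_n$, the vector end for $D_n$, minuscule ends for $E_6,E_7$), and you defer its verification. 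Since the bounds are the only part of the statement actually used later in the paper (to control $ma_i$ in Section 6), deferring this step leaves the proposition unproved.

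Worse, the deferred claim is false for $A_n$, so the proposed route cannot be completed. On the $A_3$ singularity $S=\{xy=z^{4}\}$ consider $C=\{x=z^{2},\,y=z^{2}\}$, i.e.\ the image of $t\mapsto(t^{2},t^{2},t)$. This is a reduced, irreducible, \emph{smooth} curve through $P$ (the $z$-coordinate is the parameter), and its class in $\mathrm{Cl}(\mathcal{O}_{S,P})\cong\mathbb{Z}/4$ is the $2$-torsion element: indeed $\mathrm{div}_S(x-z^{2})=2\{x=z=0\}+C$. Tracing the resolution (first blow-up produces the two end curves meeting at an $A_1$ point; the strict transform of $C$ has tangent direction $[0:0:1]$ and passes through that $A_1$ point, then meets the middle curve $E_2$ transversally at a point off $E_1\cup E_3$), one gets $\mathbf{b}=e_2$ and hence
\[
f^{\ast}C=C^{\prime}+\tfrac12E_1+E_2+\tfrac12E_3,
\]
so $(n+1)a_2=4>3=n$. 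Thus smooth curves \emph{can} meet interior components of an $A_n$ chain, your classification of ``safe'' components fails, and in fact the inequality $(n+1)a_i\leq n$ asserted in the proposition is itself violated. Any correct version must either add a hypothesis pinning down the position of $C^{\prime}$ in the dual graph or weaken the bound to the full maximum of the scaled inverse Cartan matrix (for $A_n$ this is $\lfloor (n+1)^2/4\rfloor$, attained at the middle node), and the downstream use of the estimate $ma_i\leq 22$ in Section 6 would then need to be revisited as well.
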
 
Notice that $P\in S$ cannot be of type $E_8$ because this singularity is factorial and hence there is no smooth curve passing through it.

The proof of this proposition is by a straightforward computation of the coefficients $a_i$ in $f^{\ast}C$ depending on the type of the singularity and the position of $C^{\prime}$ in the dual graph of the exceptional locus of the singularity and it is omitted. Similar computations can be found in~\cite[Proposition 4.5]{Tz03}.


\section{Methodology of the proof of Theorems~\ref{intro-the-1},~\ref{intro-the-2}.}\label{sec-3}

Let $X$ be a canonically polarized surface defined over an algebraically closed field of characteristic $p>0$ with a nontrivial global vector field $D$. The strategy for the proof of Theorems~\ref{intro-the-1},~\ref{intro-the-2} is to do one of the following:
\begin{enumerate}
\item Find an integral curve $C$ of $D$ on $X$ with the following properties: Its arithmetic genus $\mathrm{p}_a(C)$ is a function of $K_X^2$, $\mathrm{p}_a(\bar{C})\geq 1$, where $\bar{C}$ is the normalization of $C$, and such that $C$ contains some of the fixed points of $D$. Then by using the results of Section~\ref{sec-2}, if $\mathrm{p}_a(C)$ is small enough compared to the characteristic $p$, $D$ induces a vector field  on $C$ which lifts to $\bar{C}$. But this would be impossible since smooth curves of genus greater or equal than two have no nontrivial global vector fields and global vector fields on smooth elliptic curves do not have fixed points. This argument will allow us to conclude that if $p>f(K_X^2)$, for some function $f(K_X^2)$ of $K_X^2$ then $X$ does not have any nontrivial global vector fields.
\item Find a positive dimensional family of integral curves $\{C_t\}$ of $D$ whose arithmetic genus is a function of $K_X^2$ and $\chi(\mathcal{O}_X)$.  Then from Corollary~\ref{sec1-cor-8} there must be a relation of the form $p<f(K_X^2,\chi(\mathcal{O}_X))$. Hence if such a  relation does not hold, $X$ does not have any nontrivial global vector fields.

\end{enumerate}

In order to achieve this, the following method will be used. It is based on a method initially used in~\cite{RS76} and then in~\cite{Tz17} but with different objectives.

Since $X$ has a nontrivial global vector field, then by~\cite[Proposition 4.1]{Tz17} $X$ has a nontrivial global vector field $D$ of either additive or multiplicative type which induces a  nontrivial $\alpha_p$ or 
$\mu_p$ action.  Let $\pi \colon X \rightarrow Y$ be the quotient. Then $\pi$ is purely inseparable of degree $p$, $Y$ is normal and $K_Y$ is $\mathbb{Q}$-Cartier. Consider now the following diagram

\begin{equation}\label{sec2-diagram-1}
\xymatrix{
    Y^{\prime}\ar[d]_h \ar[dr]^g    &         X \ar[d]^{\pi} & \\
 Z                                                 & Y   
}
\end{equation}
where $g \colon Y^{\prime} \rightarrow Y$ is the minimal resolution of $Y$ and  $h \colon Y^{\prime} \rightarrow  Z$ its minimal model.

\begin{lemma}\label{sec3-lema-1}
Every $g$-exceptional curve is a rational curve (perhaps singular).
\end{lemma}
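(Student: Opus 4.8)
The plan is to lift the purely inseparable quotient $\pi\colon X\to Y$ to the resolution $g\colon Y'\to Y$ and then transport rationality between the two sides of the resulting commutative square. Concretely, I would let $W$ denote the normalization of $Y'$ in the function field $k(X)$. Since $k(X)/k(Y)$ is purely inseparable of degree $p$ and $k(Y')=k(Y)$, the induced finite morphism $\rho\colon W\to Y'$ is purely inseparable of degree $p$, and $W$ is a normal projective surface with $k(W)=k(X)$. Taking the Stein factorization of the generically finite morphism $g\circ\rho\colon W\to Y$ produces a finite morphism onto a normal surface with function field $k(X)$ which is finite over $Y$; by uniqueness of the normalization of $Y$ in $k(X)$ this surface is $X$ itself, so $g\circ\rho$ factors through a proper birational morphism $\mu\colon W\to X$. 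This yields a commutative square $\pi\circ\mu=g\circ\rho$ in which $\pi$ and $\rho$ are finite purely inseparable of degree $p$ while $\mu$ and $g$ are proper and birational.

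Next I would translate a $g$-exceptional curve into the $X$-side. Let $E\subset Y'$ be $g$-exceptional, so $g(E)=Q$ is a point of $Y$, and set $E_W=\rho^{-1}(E)_{\mathrm{red}}$. Because $\rho$ is a universal homeomorphism, $E_W$ is an irreducible curve mapping onto $E$ via a finite purely inseparable morphism. Commutativity gives $\pi(\mu(E_W))=g(\rho(E_W))=g(E)=Q$; since $\pi$ is also a universal homeomorphism, $\pi^{-1}(Q)$ is a single point $P\in X$, forcing $\mu(E_W)=P$. Thus $E_W$ is contracted by $\mu$ to $P$, which is either a smooth point or a rational double point of $X$, hence in either case a rational singularity. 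Resolving $W$ by some $\nu\colon\tilde W\to W$ and composing with $\mu$ gives a resolution $\tilde W\to X$ contracting the strict transform $\tilde E_W$ of $E_W$ to $P$; as every exceptional curve of a resolution lying over a rational surface singularity is rational, $\tilde E_W$, and therefore its birational image $E_W$, is rational.

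Finally I would push rationality back across $\rho$. The normalization of $E_W$ is $\mathbb{P}^1$, and $\rho$ induces an inclusion $k(E)\hookrightarrow k(E_W)\cong k(\mathbb{P}^1)$; by Lüroth's theorem every field intermediate between $k$ and $k(\mathbb{P}^1)$ is rational, so $E$ has geometric genus zero and is rational, which is the assertion of the lemma. The step I expect to require the most care is the construction of the morphism $\mu\colon W\to X$, that is, verifying that the Stein factorization of $g\circ\rho$ genuinely recovers $X$ as the normalization of $Y$ in $k(X)$; everything afterward is a formal consequence of purely inseparable morphisms being homeomorphisms, together with the rationality of the singularities of $X$ supplied by the canonical-polarization hypothesis. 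A secondary point worth checking is that $E_W$ is genuinely one-dimensional and irreducible, which again follows from $\rho$ being finite and a universal homeomorphism.
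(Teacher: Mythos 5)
Your argument is correct and follows essentially the same route as the paper: both form the normalization of $Y'$ in $k(X)$, exhibit it as birational to $X$, transport the rationality of the exceptional curves over the rational double points of $X$ to that cover, and push it down the degree-$p$ purely inseparable morphism onto the $g$-exceptional curves. The only cosmetic difference is that you construct the birational morphism $W\to X$ via Stein factorization and invoke the rationality of exceptional curves over rational singularities directly, whereas the paper passes to the minimal resolutions of both surfaces and factors the induced birational map of smooth surfaces into blow-ups; the content is the same.
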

\begin{proof}
let $\hat{X}$ be the normalization of $Y^{\prime}$ in $K(X)$. Let $\phi \colon W \rightarrow \hat{X}$ be the minimal resolution of $\hat{X}$. Then there exists a commutative diagram
\[
\xymatrix{
W \ar[r]^{\phi}\ar[d]^{\psi} & \hat{X} \ar[r]^{\hat{\pi}}\ar[d]^{\hat{g}}&Y^{\prime} \ar[d]^g \\
 X^{\prime} \ar[r]^f &X \ar[r]^{\pi} & Y 
}
\]
where $\hat{\pi}$ is purely inseparable of degree $p$, $f\colon X^{\prime} \rightarrow X$ is the minimal resolution of $X$ and $\psi$ is birational.  Considering that $X$ has rational double points, the $f$ exceptional curves are smooth rational curves. Therefore, since $\psi$ is a composition of blow ups,  it easily follows that every $\hat{g}$-exceptional curve is a rational curve. Now let $F$ be a $g$-exceptional curve. Then $F=\hat{\pi}(\hat{F})$, where $\hat{F}$ is a $\hat{g}$-exceptional curve. Hence, $F$ is a rational curve.

\end{proof}

Integral curves on $X$ will be found by choosing a suitable a reflexive sheaf $L$ on $Y$ such that either $h^0(L)\geq 2$, in which case the pullbacks in $X$ of the divisors of $Y$ corresponding to the sections of $L$ will be integral curves of $D$, or $h^0((\pi^{\ast}L) ^{[1]})\geq 2$ and then study the action of $D$ on $H^0((\pi^{\ast}L)^{[1]})$ exhibited in  Proposition~\ref{sec1-prop1}. The eigenvectors of this action will be curves stabilized  by $D$ and under suitable conditions their components which are not contained in the divisorial part of $D$ will be integral curves of $D$.

In order to prove  Theorems~\ref{intro-the-1},~\ref{intro-the-2} we will distinguish cases with respect to the Kodaira dimension $\kappa(Z)$ of $Z$. Then results from the classification of surfaces in positive characteristic will be heavily used~\cite{BM76},~\cite{BM77},~\cite{Ek88} and the geometry o $X$ and $Z$ will be compared by using the diagram (\ref{sec2-diagram-1}). Moreover, since $\pi$ is a purely inseparable map, it induces an equivalence between the \'etale sites of $X$ and $Y$. Therefore $X$ and $Y$ have the same algebraic fundamental group, $l$-adic betti numbers and \'etale Euler characteristic. Then by using the fact that $g$ and $h$ are  birational it will be possible to calculate the algebraic fundamental group, $l$-adic Betti numbers and \'etale Euler characteristic of $X$ from those of $Z$.

The proof of Theorems~\ref{intro-the-1},~\ref{intro-the-2} is significantly easier if the vector field $D$ has a nontrivial divisorial part as the next theorem shows.

\begin{theorem}\label{sec3-lemma-2}~\cite[Theorem 6.1]{Tz17}
Suppose that $D$ has a nontrivial divisorial part. Suppose that $K_X^2<p$. Then the Kodaira dimension of $Z$ is $-1$ and  $X$ is purely inseparably uniruled.
\end{theorem}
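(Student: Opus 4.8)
The plan is to combine the Rudakov--Shafarevich canonical bundle formula for the purely inseparable quotient $\pi\colon X\to Y$ with the ampleness of $K_X$. By~\cite{RS76} one has, as $\mathbb{Q}$-Cartier divisors,
\[
K_X=\pi^{\ast}K_Y+(p-1)\Delta ,
\]
where $\Delta$ is the divisorial part of $D$, which is nonzero and effective by hypothesis. First I would intersect this identity with the ample class $K_X$. Since $K_X$ is ample and $\Delta$ is a nonzero effective divisor, $K_X\cdot\Delta$ is a positive integer, so $K_X\cdot\Delta\geq 1$; and $K_X^2<p$ forces $K_X^2\leq p-1$. Therefore
\[
K_X\cdot\pi^{\ast}K_Y=K_X^2-(p-1)(K_X\cdot\Delta)\leq (p-1)-(p-1)=0,
\]
and the inequality is strict unless $K_X^2=p-1$ and $K_X\cdot\Delta=1$ hold simultaneously.

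Second, granting the strict inequality $K_X\cdot\pi^{\ast}K_Y<0$, I would deduce that $K_Y$ is not pseudoeffective: otherwise $\pi^{\ast}K_Y$ would be pseudoeffective (the pullback of a pseudoeffective class under a finite surjective map is pseudoeffective), and its intersection with the ample class $K_X$ would be nonnegative, a contradiction. Hence $H^0(Y,mK_Y)=0$ for all $m>0$. To pass to the minimal resolution $g\colon Y'\to Y$ appearing in the diagram (\ref{sec2-diagram-1}) I would use that the discrepancies of a minimal resolution are nonpositive, so that $g^{\ast}K_Y-K_{Y'}$ is an effective $g$-exceptional divisor; this gives an inclusion $H^0(Y',mK_{Y'})\hookrightarrow H^0(Y,mK_Y)=0$, and therefore $\kappa(Z)=\kappa(Y')=-\infty$, i.e. $\kappa(Z)=-1$.

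Third, by the Enriques--Bombieri--Mumford classification of surfaces in positive characteristic~\cite{BM76},~\cite{BM77}, a minimal surface $Z$ with $\kappa(Z)=-1$ is rational or birationally ruled; in particular $Y$, being birational to $Z$, is covered by rational curves. To conclude that $X$ is purely inseparably uniruled I would pull this covering back along $\pi$: for a general rational curve $\Gamma\subset Y$ the reduced preimage $\pi^{-1}(\Gamma)$ maps purely inseparably onto $\Gamma$, so its normalization maps purely inseparably onto $\mathbb{P}^1$ and is hence itself isomorphic to $\mathbb{P}^1$. As $\Gamma$ varies these curves sweep out $X$, exhibiting $X$ as purely inseparably uniruled.

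The hard part is the borderline case $K_X^2=p-1$, $K_X\cdot\Delta=1$, where only $K_X\cdot\pi^{\ast}K_Y=0$ is available. Here the Hodge index theorem (Lemma~\ref{hodge}) gives $(\pi^{\ast}K_Y)^2\leq 0$, with two possibilities. If $(\pi^{\ast}K_Y)^2<0$, then writing a hypothetical Zariski decomposition and using $\pi^{\ast}K_Y\cdot K_X=0$ with $K_X$ ample forces both the positive and negative parts to vanish, contradicting negativity; so $K_Y$ is not pseudoeffective and the previous argument applies. If instead $(\pi^{\ast}K_Y)^2=0$, then the equality case of Hodge index yields $\pi^{\ast}K_Y\equiv 0$, hence $K_Z\equiv 0$ and $\kappa(Z)=0$; this last case must be excluded separately. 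I would do so using that $\pi$ is a universal homeomorphism, so that $X$ and $Y$ share algebraic fundamental group, $l$-adic Betti numbers and \'etale Euler characteristic, and then checking that the numerical invariants forced by $K_Z\equiv 0$ are incompatible with $X$ being canonically polarized with $K_X^2=p-1$. This exclusion, rather than the main inequality, is where the genuine difficulty lies.
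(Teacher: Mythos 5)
The paper does not reprove this statement; it is imported verbatim from \cite[Theorem 6.1]{Tz17}, so your attempt has to stand on its own. Its main line is correct and efficient: since $K_X$ is Cartier (rational double points are Gorenstein) and ample, and $\Delta$ is a nonzero effective curve, $K_X\cdot\Delta\geq 1$, so intersecting $K_X=\pi^{\ast}K_Y+(p-1)\Delta$ with $K_X$ gives $K_X\cdot\pi^{\ast}K_Y\leq K_X^2-(p-1)\leq 0$. When this is strict, no positive multiple of $K_Y$ can be effective (its pullback would be an effective divisor meeting the ample $K_X$ negatively), the nonnegativity of the $a_i$ in (\ref{sec2-eq-1}) transports the vanishing of plurigenera to $Y^{\prime}$, hence $\kappa(Z)=-1$; and the descent of rationality of curves along the purely inseparable $\pi$ via L\"uroth gives purely inseparable uniruledness.

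The genuine gap is the borderline case $K_X^2=p-1$, $K_X\cdot\Delta=1$, which you correctly isolate but do not close. There Lemma~\ref{hodge} gives $(\pi^{\ast}K_Y)^2\leq 0$, and in the equality case $\pi^{\ast}K_Y\equiv 0$, hence $K_Y\equiv 0$; if in addition all $a_i=0$ in (\ref{sec2-eq-1}), then $K_{Y^{\prime}}\equiv 0$ and $\kappa(Z)=0$, in which case \emph{both} conclusions of the theorem would fail for that configuration, so it must be excluded, not merely flagged. Your proposed exclusion (``checking that the numerical invariants forced by $K_Z\equiv 0$ are incompatible with $K_X^2=p-1$'') is never carried out, and there is no reason to expect it to be routine: the configuration to be ruled out is precisely an $\alpha_p$- or $\mu_p$-quotient $\pi\colon X\rightarrow Y$ with $Y$ a Du Val surface with numerically trivial canonical class, which is the situation Section~\ref{sec-5} of this paper spends many pages excluding, under hypotheses on $p$ far stronger than $K_X^2<p$ and in the easier subcase $\Delta=0$. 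The free numerical information in your borderline case is only that $K_X\equiv (p-1)\Delta$ with $\Delta$ irreducible and $\Delta^2=1/(p-1)$; this constrains the local class groups of the singular points (it is realizable, for instance, with an $A_{p-2}$ point) but produces no contradiction by itself. Until this case is actually handled, the proof is incomplete.
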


Finally I collect some formulas and set up some terminology and notation that will be needed in the proofs.

 Let $\Delta$ be the divisorial part of $D$. There is also the following  adjunction formula for purely inseparable maps~\cite[Corollary 1]{RS76} 
\begin{gather}\label{sec2-eq-2}
K_X=\pi^{\ast}K_Y+(p-1)\Delta.
\end{gather}
(According to~\cite{RS76}, the previous formula holds in the smooth part of $X$ and hence everywhere since $X$ is normal).
 
Let $F_i$, $i=1,\ldots, n$ be the $g$-exceptional curves and $E_j$, $j=1,\ldots,m$ be the $h$-exceptional curves. By Lemma~\ref{sec3-lema-1} the $g$-exceptional curves $F_i$ are all rational (but perhaps singular). 

Taking into consideration  that  $g \colon Y^{\prime}\rightarrow Y$ is the minimal resolution of $Y$, we get the following adjunction formulas
\begin{gather}\label{sec2-eq-1}
K_{Y^{\prime}}+\sum_{i=1}^na_iF_i=g^{\ast}K_Y,\\
K_{Y^{\prime}}=h^{\ast}K_Z+\sum_{j=1}^mb_jE_j,\nonumber
\end{gather}
where $a_i \in\mathbb{Q}_{\geq 0}$, and $b_j\in \mathbb{Z}_{>0}$, $j=1,\ldots m$. Moreover since both $Y^{\prime}$ and $Z$ are smooth, $h$ is the composition of $m$ blow ups.


In the next sections I will consider cases with respect to the Kodaira dimension $\kappa(Z)$ of $Z$. 

Finally, for the rest of the paper, fix the notation of this section.

\section{The Kodaira dimension of $Z$ is 1 or 2.}\label{sec4}
\begin{proposition}\label{sec4-prop}
Let $X$ be a canonically polarized surface over a field of characteristic $p>0$. Suppose that $X$ has a nontrivial global vector field $D$ with isolated singularities such that $D^p=0$ or $D^p=D$. Suppose moreover, with notation as in Section~\ref{sec-3}, that the Kodaira dimension $\kappa(Z)$ of $Z$ is 1 or 2. then
\begin{enumerate}
\item Suppose that $K_X^2=1$. Then $p< 56$.
\item Suppose that $K_X^2\geq 2$. Then $
p<42K_X^2+3.$
\end{enumerate}

\end{proposition}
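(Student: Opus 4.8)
The plan is to exploit the hypothesis $\kappa(Z)\ge 1$ to manufacture a positive-dimensional family of $D$-stabilized curves inside $|6K_X|$, and then to run the fibration-and-genus argument from the proof of Proposition~\ref{sec1-prop-8} \emph{forwards}: because the family genuinely exists, the argument does not terminate in a contradiction but instead outputs the sought inequality on $p$. First I would use that $D$ has isolated singularities, so its divisorial part vanishes and (\ref{sec2-eq-2}) gives $K_X=\pi^{\ast}K_Y$. Hence reflexive pullback embeds $H^0(\mathcal{O}_Y(mK_Y))$ into $H^0(\mathcal{O}_X(mK_X))$ as the kernel of the operator $D^{\ast}$ of Proposition~\ref{sec1-prop1}, and the resulting divisors are stabilized by $D$. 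Writing $g\colon Y'\to Y$ for the minimal resolution, one has $\lfloor g^{\ast}(mK_Y)\rfloor=mK_{Y'}+\sum\lfloor ma_i\rfloor F_i$ with $a_i\ge 0$, so $h^0(\mathcal{O}_Y(mK_Y))\ge h^0(mK_{Y'})=P_m(Y')=P_m(Z)$, using birational invariance of plurigenera. Taking $m=6$ and establishing $P_6(Z)\ge 2$ then yields a $D$-stabilized pencil $|V|\subset|6K_X|$.

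Next I would pass to the minimal resolution. Assuming $p$ exceeds the target bound, the lifting hypotheses of Theorem~\ref{sec11-th-2} are met: for $K_X^2=1$ its threshold is exactly $56$, while for $K_X^2\ge2$ its threshold $12\chi(\mathcal{O}_X)+11K_X^2+1$ is $\le 42K_X^2+3$ by Noether's inequality $\chi(\mathcal{O}_X)\le\frac12K_X^2+3$ for the minimal surface of general type $X'$. Thus $D$ fixes the singular points, lifts to a field $D'$ on $X'$ with vanishing divisorial part, and the exceptional curves are integral. Resolving $|V|$ and taking the Stein factorization exactly as in Proposition~\ref{sec1-prop-8} gives a fibration $h\colon W\to B$ whose general fibre is the transform $\bar{C}$ of an irreducible component $C_i$ of a general member of $|V|$. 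Since $6K_X\cdot C<p$, Corollary~\ref{sec1-cor1} shows every $C_i$ is an integral curve of $D$; and by the Hodge index theorem (Lemma~\ref{hodge}) one gets $K_{X'}\cdot C_i'\le 6K_X^2$ and $(C_i')^2\le 36K_X^2$, hence $p_a(C_i')\le 1+21K_X^2$.

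The decisive point, and the source of the bound $42=6\cdot 7$ rather than the weaker $54=6\cdot 9$ one would read off from Proposition~\ref{sec1-prop-8}, is that here I need only the normalization-lifting estimate, not the intersection-theoretic search for fixed points. Indeed, if $p_a(C_i')<(p-1)/2$ then Proposition~\ref{sec1-prop4} lifts $D'$ to $\bar C$; because $\Delta=0$ keeps $C_i'$ out of the divisorial part the induced field on $\bar C$ is nonzero, so $\bar C$ has genus $\le 1$ (and $\bar C\cong\mathbb{P}^1$ if $C_i'$ is singular, by Corollary~\ref{sec1-cor2}). A fibration of $W$ with general fibre of genus $\le 1$ forces $\kappa(W)\le 1$, contradicting $\kappa(W)=\kappa(X)=2$. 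Therefore $p_a(C_i')\ge(p-1)/2$, and combined with $p_a(C_i')\le 1+21K_X^2$ this gives $(p-1)/2\le 1+21K_X^2$, i.e. $p\le 42K_X^2+3$; the boundary case is excluded by the strict inequality required in Proposition~\ref{sec1-prop4}. When $K_X^2=1$ the lifting threshold $56$ dominates $45$, and one concludes $p<56$.

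The hard part will be the uniform plurigenus bound $P_6(Z)\ge 2$ for $\kappa(Z)\ge 1$ in characteristic $p$. For $\kappa(Z)=2$, Riemann--Roch gives only $P_6(Z)\ge\chi(\mathcal{O}_Z)+15K_Z^2$, and since $\chi(\mathcal{O}_Z)$ is not known to be positive it must be bounded below; note that $\chi(\mathcal{O}_Z)$ is \emph{not} $\chi(\mathcal{O}_X)$, so this control has to come through the inseparable quotient together with the resolution diagram (\ref{sec2-diagram-1}), where $Y$ may carry singularities worse than canonical. For $\kappa(Z)=1$ one must instead analyse the (possibly quasi-)elliptic fibration of $Z$ and its multiple fibres, whose multiplicities can be divisible by $p$, before the canonical bundle formula delivers $P_6(Z)\ge 2$. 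Checking that $m=6$ suffices in every such case, and that the machinery of Proposition~\ref{sec1-prop4} genuinely applies to the possibly singular components $C_i'$, is where the real work lies.
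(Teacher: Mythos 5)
Your overall architecture (reduce to $\Delta=0$, secure the lifting to $X'$ via Theorem~\ref{sec11-th-2} and Noether's inequality, then extract curves from pluricanonical systems of $Z$ and bound their arithmetic genus by $1+21K_X^2$ via Hodge index, so that Proposition~\ref{sec1-prop4}/Tate forces $p<42K_X^2+3$) matches the paper's numerology exactly --- the constant $42$ in the paper also arises from $\tfrac{1}{2}\nu(\nu+1)K_X^2+1<(p-1)/2$ with $\nu=6$. Your observation that one can skip the fixed-point search of Proposition~\ref{sec1-prop-8} and still conclude (smooth general fibre of genus $\le 1$ contradicts general type; singular general fibre triggers Tate) is sound and is what saves the constant from $54K_X^2+3$ to $42K_X^2+3$.

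However, there is a genuine gap, and it sits precisely where you deferred "the hard part": the uniform claim $P_6(Z)\ge 2$ is \emph{false} for some elliptic surfaces of Kodaira dimension $1$, so no $D$-stabilized pencil in $|6K_X|$ exists in those cases and your entire mechanism has nothing to run on. Concretely, for a tame elliptic fibration over $\mathbb{P}^1$ the canonical bundle formula gives $\dim|nK_Z|=n(-2+\chi(\mathcal{O}_Z))+\sum_i\bigl[\,n(m_i-1)/m_i\,\bigr]$, and with $\chi(\mathcal{O}_Z)=0$ and three multiple fibres of multiplicity $4$ one gets $\kappa(Z)=1$ but $\dim|6K_Z|=-12+3\cdot 4=0$, i.e.\ $P_6(Z)=1$. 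The paper's proof of this proposition is forced to treat exactly these low-plurigenus cases (its Cases 2.1 and 2.2, after enumerating the solutions of Noether's formula with $p_g(Z)\le 1$) by a \emph{different} argument: it shows $B\cong\mathbb{P}^1$ and that there are no wild fibres, proves via the Albanese map that the unique effective divisor in $|\nu K_Z|$, $\nu\in\{4,6\}$, is a multiple of a \emph{smooth elliptic curve} $E$, pulls $E$ back to a single integral curve of $D$ on $X$, and derives the bound from the contradiction that the lifted vector field on the elliptic normalization would have a fixed point. Your proposal contains no substitute for this branch, and since it is the branch that actually produces the stated bound, the proof is incomplete as written. (Two smaller points: you should also rule out wild fibres, where the multiplicity is divisible by $p$ --- the paper does this by showing $14K_X^2>p$ would follow --- and you should handle the boundary value $p=42K_X^2+3$ explicitly, since your strict inequality in Proposition~\ref{sec1-prop4} only excludes $p>42K_X^2+3$.)
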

\begin{proof}
Suppose that the statements of the proposition are not true, i.e., $p\geq 56$, if $K_X^2=1$ and that $p>42K_X^2+3$,
 if $K_X^2\geq 2$. Then, I will sow that also $p>12\chi(\mathcal{O}_X)+11K_X^2+1$ and therefore from Theorem~\ref{sec11-th-2}  $D$ fixes the singular points of $X$ and lifts to a vector field $D^{\prime}$ in the minimal resolution of $f$.
 
 Indeed. Let $f\colon X^{\prime} \rightarrow X$ be the minimal resolution of $X$. Since $X$ has canonical singularities, $\chi(\mathcal{O}_X)=\chi(\mathcal{O}_{X^{\prime}})$ and $K_{X^{\prime}}=f^{\ast}K_X$. $X^{\prime}$ is a minimal surface of general type. Therefore from Noether's inequality, $2\chi(\mathcal{O}_{X^{\prime}})\leq K_{X^{\prime}}^2+6$. Hence, since $K_X^2=K_{X^{\prime}}^2$, it follows that $2\chi(\mathcal{O}_X)\leq K_X^2+6$.  Hence
 \begin{gather}\label{00000}
 12\chi(\mathcal{O}_X)+11K_X^2+1\leq 17K_X^2+37<42K_X^2+3<p,
 \end{gather}
 by the assumption.  
 
 Now by Theorem~\ref{sec3-lemma-2}, $D$ has no divisorial part, i.e., $\Delta=0$. Therefore, $K_X=\pi^{\ast}K_Y$ and hence $K_Y$ is ample.

Consider cases with respect to the Kodaira dimension $\kappa(Z)$ of $Z$. 

\textbf{Case 1: Suppose that $\kappa(Z)=2$.}

According to~\cite[Theorem 1.20]{Ek88}, the linear system $|4K_Z|$ is very ample. Let $W\in|4K_Z|$ be a smooth member which does not go through the points blown up by $h$ in the diagram~\ref{sec2-diagram-1}. Then by the adjunction formula, $p_a(W)=10K_Z^2+1$. Then combining the equations~\ref{sec2-eq-1} it follows that
\begin{gather}\label{sec3-eq-1}
g^{\ast}(4K_Y)=4K_{Y^{\prime}}+4\sum_{i=1}^na_iF_i=h^{\ast}(4K_Z)+4\sum_{j=1}^mb_jE_j+4\sum_{i=1}^na_iF_i\sim \\
W^{\prime}+4\sum_{j=1}^mb_jE_j+4\sum_{i=1}^na_iF_i,
\end{gather}
where $W^{\prime}=h^{\ast}W=h_{\ast}^{-1}W$ is the birational transform of $W$ in $Y^{\prime}$. By pushing down to $Y$ we get that 
\begin{gather}\label{sec3-eq-2}
4K_Y\sim \tilde{W}+4\sum_{i=1}^m b_i\tilde{E}_i,
\end{gather}
where $\tilde{E}_i=g_{\ast}E_i$, $1\leq i \leq m$. Note that since $Y^{\prime}$ is the minimal resolution of $Y$, $g$ does not contract any  (-1) $h$-exceptional curves. Hence if $h$ is not an isomorphism then $g_{\ast}\sum_{i=1}^mE_i \not= 0$. Now since $|4K_Z|$ is very ample it follows that $\dim |\tilde{W}|\geq 1$ and therefore $\dim |4K_Y|\geq 1$, or equivalently $h^0(\mathcal{O}_Y(4K_Y))\geq 2$. But by Corollary~\ref{sec1-cor-8} this is impossible.

\textbf{Case 2: Suppose that $\kappa(Z)=1$.} 

Since $\kappa(Z)=1$, it is well known that $Z$ admits an elliptic fibration $\phi \colon Z \rightarrow B$, where $B$ is a smooth curve. Then one can write
\begin{gather}\label{sec3-eq-4}
R^1\phi_{\ast}\mathcal{O}_Z=L\oplus T,
\end{gather}
where $L$ is an invertible sheaf on $B$ and $T$ is a torsion sheaf. 

\textbf{Claim:} $B\cong \mathbb{P}^1$ and  $T=0$. 

By Lemma~\ref{sec3-lema-1}, the $g$-exceptional curves are rational. Hence if at least one of them is not contracted to a point by $\phi \circ h$, then $B$ is dominated by a rational curve and  hence it is isomorphic to $\mathbb{P}^1$. Suppose that every $g$-exceptional curve is contracted to a point by $\phi \circ h$. Then by looking at diagram~\ref{sec2-diagram-1} we see that there exists factorizations
\[
\xymatrix{
    & Y \ar[dr]^{\psi} & \\
    X \ar[ur]^{\pi} \ar[rr]^{\sigma} & & B
    }
\]
such that the general fiber of $\psi$ is an elliptic curve. Then let $Y_b=\psi^{-1}(b)$ be the general fiber. Then $K_Y\cdot Y_b=0$ and therefore, 
\[
K_X\cdot \pi^{\ast} Y_b=\pi^{\ast}K_Y \cdot \pi^{\ast}Y_b=p K_Y \cdot Y_b=0.
\]
But this is impossible since $K_X$ is ample. Therefore there must be a $g$-exceptional curve not contracted to a point by $\phi\circ h$ and hence $B\cong \mathbb{P}^1$.

Suppose now that $T\not= 0$. Let  $b \in T$. Then $\phi^{-1}(b)=pmW$, $m>0$ and $W$ is an idecomposable fiber~\cite{KU85}. Moreover $|14KZ|$ defines the fibration $\phi$~\cite{KU85}. Hence $14K_Z \sim \nu F$, where $F$ is a general fiber of $\phi$ and hence a smooth elliptic curve (if $p\not= 2,3$.).  Then 
$F\sim \phi^{-1}(b)=pmW$. and hence $14K_Z\sim pm\nu W$. Then by pulling up to $Y^{\prime}$ it follows that
\[
14h^{\ast}K_Z=pm\nu W^{\prime} +p(\sum_{i=1}^m c_i E_i).
\]
If $h$ blows up a point of $W$ then $c_i>0$ and $14h^{\ast}K_Z$ has a component corresponding to a $(-1)$ $h$-exceptional curve with coefficient divisible by $p$. Considering that the $(-1)$ $h$-exceptional curves do not contract by $g$, we see that in any case (if $h$ blows up a point on $W$ or not) that, after pushing down to $Y$, $14K_Y \sim p \tilde{W} + B $, for some divisor $\tilde{W}$ (either the birational transform of $W$ or the image of a $-1$ $h$-exceptional curve.  Therefore by pulling up to $X$ and since $K_X=\pi^{\ast} K_Y$,
\[
14K_X \sim p\pi^{\ast}\tilde{W} +\pi^{\ast}B.
\]
But from this it follows that $14K_X^2>p$, a contradiction. This concludes the proof of the claim.

Next consider cases with respect to $p_g(Z)$.

\textbf{Case 1.} Suppose that $p_g(Z)\geq 2$. Then, since $h^0(\mathcal{O}_Z(K_Z))\geq 2$, it easily follows that $h^0(\mathcal{O}_Y(K_Y))\geq 2$. Then by Corollary~\ref{sec1-cor-8} we get a contradiction. So this case is impossible too.

\textbf{Case 2.} Suppose that $\mathrm{p}_g(Z) \leq 1$. I will show that this case is impossible too.

From the Noether's formula on $Z$~\cite[Theorem 5.1]{Ba01}
\begin{gather}\label{sec3-eq-6}
10-8h^1(\mathcal{O}_Z)+12p_g(Z)=K_Z^2+b_2(Z)+2(2h^1(\mathcal{O}_Z)-b_1(Z))=\\
b_2(Z)+2(2h^1(\mathcal{O}_Z)-b_1(Z))\nonumber
\end{gather}
it easily follows~\cite[Page 113]{Ba01} that if $p_g(Z) \leq 1$, then the only numerical solutions to the equation~\ref{sec3-eq-6} are the following:
\begin{enumerate}
\item $p_g(Z)=0$, $\chi(\mathcal{O}_Z)=0$, $b_1(Z)=2$.
\item $p_g(Z)=0$, $\chi(\mathcal{O}_Z)=1$, $b_1(Z)=0$.
\item $p_g(Z)=1$, $\chi(\mathcal{O}_Z)=2$, $b_1(Z)=0$.
\item $p_g(Z)=1$, $\chi(\mathcal{O}_Z)=1$, $b_1(Z)=2$.
\item $p_g(Z)=1$, $\chi(\mathcal{O}_Z)=1$, $b_1(Z)=0$.
\item $p_g(Z)=1$, $\chi(\mathcal{O}_Z)=0$, $b_1(Z)=2$.
\item $p_g(Z)=1$, $\chi(\mathcal{O}_Z)=0$, $b_1(Z)=4$.
\end{enumerate}
Note that by~\cite[Lemma 3.5]{KU85} the last case is not possible. Consider next each one of the cases separately. I will only consider the first two cases. The rest are similar and are omitted.

\textbf{Case 2.1.} Suppose that $p_g(Z)=\chi(\mathcal{O}_Z)=0$ and $b_1(Z)=2$.

By Igusa's formula~\cite{IG60} it follows that the fibers of $\phi \colon Z \rightarrow \mathbb{P}^1$ are either smooth elliptic curves or of  type $mE$, where $m$ is a positive integer and $E$ an elliptic curve (singular or smooth). Also note that $\phi$ must have multiple fibers or else $Z$ cannot have Kodaira dimension 1. 

I will next show that in fact $E$ is a smooth elliptic curve. Indeed. Since $b_1(Z)=2$ it follows that $\dim \mathrm{Alb}(Z)=1$. Hence $\mathrm{Alb}(Z)$ is a smooth elliptic curve. Let then $\psi \colon Z \rightarrow \mathrm{Alb}(Z)$ be the Albanese map. Then there exist the following two maps
\[
\xymatrix{
Z\ar[r]^{\psi}\ar[d]^{\phi} & \mathrm{Alb}(Z)\\
\mathbb{P}^1 & \\
}
\]
Suppose that $mE$ is a multiple fiber of $\phi$. Suppose also that $E$ is a rational elliptic curve. Then $E$ cannot dominate $\mathrm{Alb}(Z)$ and hence it must contract by $\psi$. Hence all fibers of $\phi$ contract by $\psi$. But then there would be a nontrivial map $\mathbb{P}^1 \rightarrow \mathrm{Alb}(Z)$, which is impossible. Hence $E$ is a smooth elliptic curve. 

It is well known~\cite[Theorem 8.11]{Ba01} that the linear system $|\nu K_Z|$, $\nu\in \{4,6\}$ contains a strictly positive divisor. Then $\nu K_Z\sim sE$, where $s>0$ is a positive integer and $E$ is a smooth elliptic curve. Let $E^{\prime}=h_{\ast}^{-1}E$ be the birational transform of $E$ in $Y^{\prime}$. Then $E^{\prime}$ is a smooth elliptic curve and since the $g$-exceptional curves are all rational, it follows that $E^{\prime}$ does not contract by $g$. Therefore by pulling up to $Y^{\prime}$ and then pushing down to $Y$ we get that 
\begin{gather}\label{sec3-eq-1111}
\nu K_Y \sim m\tilde{E} +B,
\end{gather}
where $B$ is an effective divisor on $Y$. Hence by pulling up to $X$ we get that
\begin{gather}\label{sec3-eq-4444}
\nu K_X \sim m\hat{E} +\pi^{\ast} B.
\end{gather}
As in the previous cases we see that if $K_X^2< p/\nu$, $\hat{E}$ is irreducible and therefore is an integral curve of $D$ whose normalization $\bar{E}$ is a smooth elliptic curve. I will show that $D$ lifts to a vector field $\bar{D}$ on $\bar{E}$ and that $D$ has fixed points on $\hat{E}$. Then by Lemma~\ref{sec11-lemma-1}, $\bar{D}$ will have fixed points which is impossible since $\bar{E}$ is an elliptic curve and hence get a contradiction again.

Let now $f\colon X^{\prime} \rightarrow X$ be the minimal resolution of $X$. Then $K_{X^{\prime}}=f^{\ast}K_X$ and therefore
\begin{gather}\label{sec3-eq-7}
\nu K_{X^{\prime}} \sim mE^{\prime\prime} +f^{\ast}\pi^{\ast} B+F,
\end{gather}
where $E^{\prime\prime}$ is the birational transform of $\hat{E}$ in $X^{\prime}$ and $F$ is an effective $f$-exceptional divisor. Now from the equation (\ref{sec3-eq-7}), since $K_{X^{\prime}}$ is nef and big,  we get that 
\begin{gather}\label{sec3-eq-1113}
K_{X^{\prime}} \cdot E^{\prime\prime} < \nu K_{X^{\prime}}^2=\nu K_X^2.
\end{gather}
 and then from the Hodge Index Theorem it follows that that 
 \[
(E^{\prime\prime})^2<\frac{(K_{X^{\prime}}\cdot E^{\prime\prime})^2}{K_{X^{\prime}}^2} <\nu^2 K_X^2.
\]
 Therefore from the adjunction formula it follows that
\[
\mathrm{p}_a(E^{\prime\prime}) < \frac{{\nu}(\nu+1)}{2}K_X^2 +1.
\]
 Hence if 
 \[
 K_X^2< \frac{p-3}{2}\cdot \frac{2}{\nu(\nu+1)},
 \]
 then $\mathrm{p}_a(E^{\prime\prime}) <(p-1)/2$. Considering that $\nu \in\{4,6\}$, the above inequality holds if $K_X^2<(p-3)/42$,  which holds according by the assumptions. Also, since $\hat{E}$ is an integral curve of $D$, $E^{\prime\prime}$ is an integral curve of $D^{\prime}$, the lifting of $D$ to $X^{\prime}$. Therefore in this case, from Proposition~\ref{sec1-prop4} it follows that 
 the restriction  of $D^{\prime}$ on $E^{\prime\prime}$ fixes the singular points of $E^{\prime\prime}$ and hence lifts to its normalization $\bar{E}$ of $E^{\prime\prime}$. 
 
 Next I will show that $D^{\prime}$ has fixed points on $E^{\prime\prime}$. 
 
 Suppose that $D$ has no fixed points on $\hat{E}$. Then $\hat{E}$ is in the smooth part of $X$ since the singular points of $X$ are fixed points of $D$. Moreover, since $D$ has no fixed points on $\hat{E}$, $\tilde{E}=\pi(\hat{E})$ is in the smooth part of $Y$. Then 
 \[
 K_X\cdot \hat{E}=\pi^{\ast}K_Y \cdot \pi^{\ast}\tilde{E} =p (K_Y \cdot \tilde{E}) =\lambda p,
 \]
 where, since $K_Y$ is ample, $\lambda$ is a positive integer. But then from  the equation (\ref{sec3-eq-1111}) it follows that $\nu K_X^2 >p$, which is impossible. Therefore, there are fixed points of $D$ on $\hat{E}$. Let $P\in \hat{E}$ be a point which is a fixed point of $D$. Suppose that $P\in X$ is a smooth point. Then $Q=f^{-1}(P) $ is a fixed point of $E^{\prime\prime}$. Suppose that $P\in X$ is singular. Let then $F$ be an $f$-exceptional curve such that $F \cdot E^{\prime\prime}>0$. By Theorem~\ref{sec11-th-2}, $F$ is an integral curve of $D^{\prime}$. I will show that $F \cdot E^{\prime\prime} <p$ and hence by Corollary~\ref{sec1-cor0}, the intersection points $F \cap E^{\prime\prime}$ are fixed points of $D^{\prime}$. Write
 \[
 f^{\ast}\hat{E} =E^{\prime\prime} +aF +F^{\prime},
 \]
 where $F^{\prime}$ is $f$-exceptional and effective. Then by Lemma~\ref{hodge}, and (\ref{sec3-eq-4444}), it follows that 
 $\hat{E}^2<\nu^2K_X^2$ and hence
 \[
 \nu^2K_X^2>\hat{E}^2\geq (E^{\prime\prime})^2+a(F\cdot E^{\prime\prime}).
 \]
 Considering now that from (\ref{sec3-eq-1113}), 
 \[
 (E^{\prime\prime})^2\geq -2-K_{X^{\prime}}\cdot E^{\prime\prime}\geq -2 -\nu K_X^2
 \]
 We get that
 \begin{gather}\label{sec3-eq-111111}
 a(F\cdot E^{\prime\prime}) \leq 2+(\nu+\nu^2)K_X^2,
 \end{gather}
 and therefore $F\cdot E^{\prime\prime} <p$ if $2+(\nu+\nu^2)K_X^2<p$, which holds if $42K_X^2+2<p$ ($\nu =4$ or $\nu=6$). Hence the intersection points $E^{\prime\prime} \cap F$ are fixed points of $D^{\prime}$. Hence in any case there are fixed points of $D^{\prime}$ on $E^{\prime\prime}$.  Then  from Lemma~\ref{sec11-lemma-1}, the preimages of these points in $\bar{D}$ are fixed points of the lifting of $D^{\prime}$ on $\bar{D}$, which is a contradiction since a vector field on an elliptic curve has no fixed points.

\textbf{Case 2.2.} Suppose that $p_g(Z)=0$, $\chi(\mathcal{O}_Z)=1$, $b_1(Z)=0$. I will show that this case is also impossible.

\textbf{Claim:} $\dim |6K_Z| \geq 1$.

Let $F_{t_i}=m_iP_i$, $t_i\in \mathbb{P}^1$, $i=1,\ldots , r$ be the multiple fibers of $\phi$. Since $T=0$, they are all tame. Then by the canonical bundle 
formula~\cite[Theorem 7.15 and Page 118]{Ba01} we get that
\begin{gather}\label{sec3-eq-8}
\dim |nK_Z|=n(-2+\chi(\mathcal{O}_Z))+\sum_{i=1}^r \left[ \frac{n(m_i-1)}{m_i}\right]=-n+\sum_{i=1}^r \left[ \frac{n(m_i-1)}{m_i}\right],
\end{gather}
where for any $m\in \mathbb{N}$, $[m]$ denotes its integer part. Also, in the notation~\cite[Remark 8.3]{Ba01} if, 
\[
\lambda(\phi)=-1+\sum_{i=1}^r\frac{m_i-1}{m_i},
\]
Then $\kappa(Z)=1$ if and only if $\lambda(\phi)>0$. Hence $\phi$ has at least two multiple fibers.

Suppose that $\phi$ has at least three multiple fibers, i.e., $r\geq 3$ and $m_i\geq 2$. Then for every $1 \leq i \leq r$, 
\[
\left[ 6(1-\frac{1}{m_i})\right] \geq \left[\frac{6}{2}\right] =3.
\]
Then from the equation~\ref{sec3-eq-8} it follows that $\dim|6K_Z| \geq -6+3\cdot 3=3$.

Suppose that $\phi$ has exactly two multiple fibers with multiplicities $m_1$ and $m_2$. Then in order to have $\lambda(\phi)>0$, at least one of them must be greater or equal than $3$. Say $m_1\geq 3$ and $m_2\geq 2$. Then from the equation~\ref{sec3-eq-8} it follows that
\[
\dim|6K_Z| =-6+\left[6(1-\frac{1}{m_1}\right] +\left[6(1-\frac{1}{m_2}\right] \geq -6+\left[ 6\cdot \frac{2}{3}\right]+\left[ 6\cdot \frac{1}{2}\right]=1.
\]
Hence $6K_Z\sim mE$, where $m>0$ is a positive integer and $E$ is a smooth elliptic curve. By repeating now the argument used in Case 2.1 we see that this is impossible if $42K_X^2+3<p$. This concludes the study of the case when $\kappa(Z)=1$.

\end{proof}

\section{The Kodaira dimension  of $Z$ is 0.}\label{sec-5}
Fix the notation as in Section~\ref{sec-2}. The main result of this section is the following.
\begin{proposition}\label{sec5-prop}
Let $X$ be a canonically polarized surface defined over an algebraically closed field of characteristic $p>0$. Suppose that $X$ admits a nontrivial global vector field $D$ such that $D^p=0$ or $D^p=D$. Suppose that $Z$ has Kodaira dimension zero. Then
\[
p<\mathrm{max}\{8(K_X^2)^3+12(K_X^2)^2+3, 4508K_X^2+3\}.
\]
Moreover, suppose that $D^p=D$. Then
\begin{enumerate}
\item Suppose that $K_X^2 =1$. Then $p<179$.
\item Suppose that $K_X^2\geq 2$.  Then $p<140K_X^2+3.$
\end{enumerate} 
\end{proposition}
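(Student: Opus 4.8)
The plan is to argue by contradiction, assuming $p$ is at least the quantity on the right. First I would run the reduction that opens Proposition~\ref{sec4-prop}: the assumed lower bound forces $p>K_X^2$, so by Theorem~\ref{sec3-lemma-2} the divisorial part of $D$ must vanish (otherwise $\kappa(Z)=-1$). Hence $\Delta=0$, the adjunction formula~(\ref{sec2-eq-2}) becomes $K_X=\pi^{\ast}K_Y$, the divisor $K_Y$ is ample, $D$ has only isolated singularities, and $K_X^2=pK_Y^2$. Using Noether's inequality $2\chi(\mathcal{O}_X)\le K_X^2+6$ exactly as in Proposition~\ref{sec4-prop} reduces the hypothesis $p>12\chi(\mathcal{O}_X)+11K_X^2+1$ of Theorem~\ref{sec11-th-2} to the assumed bound, so that Theorem~\ref{sec11-th-2}, Corollary~\ref{sec1-cor-8} and Proposition~\ref{sec1-prop-8} are all available: $D$ lifts to the minimal resolution, fixes the singular points of $X$, and no system $|mK_X|$ with $p>(m^2+3m)K_X^2+3$ carries a positive dimensional $D$-stable subsystem.

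Next I would feed in the classification of minimal surfaces of Kodaira dimension zero in characteristic $p$: $Z$ is abelian, (quasi-)hyperelliptic, a K3 surface, or a classical or non-classical Enriques surface, and in every case $K_Z$ is numerically trivial with $nK_Z\sim 0$ for an explicit $n$. Combining the two formulas of~(\ref{sec2-eq-1}) and pushing forward by $g$ — which kills every $F_i$ but, being a minimal resolution, spares the rational $h$-exceptional $(-1)$-curves — gives that $nK_Y\sim n\sum_j b_j\tilde E_j$ is effective and supported on the images of the $h$-exceptional curves, all rational by Lemma~\ref{sec3-lema-1}. This is the analogue of the relation exploited in Proposition~\ref{sec4-prop}, but now the positive part $h^{\ast}K_Z$ has disappeared, so $K_Y$ is forced to be ample purely through the exceptional contributions; in particular $h$ cannot be an isomorphism.

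The core of the argument is then, exactly as in Case~2 of Proposition~\ref{sec4-prop}, to manufacture an integral curve of $D$ of controlled arithmetic genus and rule out its existence. When $Z$ carries an elliptic or quasi-elliptic pencil — which always happens for (quasi-)hyperelliptic $Z$ and for the elliptic K3, Enriques and abelian surfaces — the canonical bundle formula produces a smooth elliptic curve $E$ on $Z$ with $\nu K_Z\sim sE$ for small $\nu$; transporting $E$ through diagram~(\ref{sec2-diagram-1}) and pulling back by $\pi$ yields, as long as $K_X^2<p/\nu$, an irreducible integral curve $\hat E$ of $D$ whose normalization is a smooth elliptic curve. The intersection estimates of Section~\ref{sec-2} (Lemma~\ref{hodge}, Corollary~\ref{sec1-cor0}, together with Theorem~\ref{sec11-th-2} to make the exceptional curves integral) force a fixed point of $D$ on $\hat E$, which Lemma~\ref{sec11-lemma-1} lifts to a fixed point on the elliptic normalization; since a global vector field on an elliptic curve is fixed point free, this is a contradiction. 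Carrying the estimate $p_a\le 1+\tfrac{1}{2}\nu(\nu+1)K_X^2$ through and comparing with $p_a<(p-1)/2$ (Proposition~\ref{sec1-prop4}) yields the linear bounds, and in the multiplicative case the improved constants $p<179$, $p<140K_X^2+3$.

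The decisive obstacle is the non-elliptic, irregular and non-classical part of the list — simple abelian surfaces, non-elliptic K3 and Enriques surfaces, and the pathological surfaces peculiar to characteristics $2$ and $3$ — where no pencil of elliptic curves is at hand. There I would locate the integral curve through the torsion of $K_Z$ and, when $Z$ is abelian or hyperelliptic, through the Albanese of $Z$ (recall from Section~\ref{sec-3} that $X$, $Y$ and $Z$ share their $l$-adic Betti numbers), and then bound its arithmetic genus by Hodge index (Lemma~\ref{hodge}) and adjunction on the minimal resolution. The delicate point is that in these cases the pluricanonical degree $m$ one is forced to use grows with $K_X^2$, so the inequalities $K_X\cdot C\le mK_X^2$ and $C^2\le m^2K_X^2$ with $m$ of the order of $K_X^2$ are what produce the cubic term $8(K_X^2)^3+12(K_X^2)^2$; tracking the exact torsion order of $K_Z$ in each characteristic fixes the remaining constants. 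For $D$ of multiplicative type the induced $\mu_p$-action is diagonalizable, hence $X$ carries a whole family of integral curves and, by Corollary~\ref{sec1-cor2}, every rational integral curve meets the fixed locus in at most two points; this rigidity removes the need for high pluricanonical degrees and so eliminates the cubic term, leaving the sharper linear estimates of the moreover part.
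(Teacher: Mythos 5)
There is a genuine gap at the core of your argument. Your central mechanism for producing an integral curve of $D$ is the canonical bundle formula giving ``a smooth elliptic curve $E$ on $Z$ with $\nu K_Z\sim sE$ for small $\nu$,'' but this is precisely what is unavailable when $\kappa(Z)=0$: here $K_Z$ is numerically trivial (torsion), so every effective pluricanonical divisor on $Z$ is zero and no such $E$ exists. You have imported the $\kappa(Z)=1$ strategy of Proposition~\ref{sec4-prop} into a setting where it is vacuous. The paper's proof instead finds its curves entirely inside the exceptional loci of diagram~(\ref{sec2-diagram-1}): since $K_Z\equiv 0$, equation~(\ref{sec2-eq-1}) forces $K_Y=\sum_j b_j\tilde E_j$ to be supported on images of $h$-exceptional curves, and the integral curves of $D$ used are $\pi^{\ast}$ of components of this divisor (multiplicative case) or come from the $g$-exceptional curves $F_j$ and their images $\hat F_j$ in $Z$ (additive case). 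The contradiction is then not ``an elliptic integral curve with a fixed point'' but rather that one forces $\dim|mK_Y|\geq 1$ for a controlled $m$, against Corollary~\ref{sec1-cor-8}; in the multiplicative K3 case this goes through a topological argument (the curve $Z_i+Z_j+C$ contains a loop, so $h^1(\mathcal O_{\tilde W})\geq 1$ and hence $h^0(\mathcal O_Y(K_Y+\tilde W))\geq 2$ with $\tilde W+K_Y$ pulling back to $7K_X$), which your sketch does not supply.

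Your explanation of the shape of the bounds is also not correct. The cubic term does not come from ``the torsion order of $K_Z$'' (which is bounded by an absolute constant for all Kodaira dimension zero surfaces) nor from a pluricanonical degree growing with $K_X^2$ on $Z$; it comes from the additive K3 subcase in which the image $\hat F_1\subset Z$ of a $g$-exceptional curve is singular, where the paper proves the multiplicity bound $m_Q(\hat F_1)\leq K_X^2$ and is therefore forced to work with the system $|(K_X^2)K_Y|$, so that Corollary~\ref{sec1-cor-8} with $m=K_X^2$ yields a cubic inequality. The factor $8$ in $8(K_X^2)^3$ and the constant $4508=2\cdot 46\cdot 49$ arise because the Enriques case is handled by passing to the \'etale double cover (a K3 cover), which replaces $K_X^2$ by $2K_X^2$. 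Finally, the worry about non-classical Enriques and quasi-hyperelliptic surfaces in characteristics $2,3$ is moot, since the hypotheses already force $p$ large; and the abelian and hyperelliptic cases are eliminated much more cheaply than you propose, by showing directly that $K_Y$ could not be ample (a factorization of $g$ through $Z$, respectively an elliptic fibration over $\mathrm{Alb}(Z)$ with $K_Y\cdot Y_b=0$).
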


\begin{proof}
I will only do the case when $K_X^2\geq 2$. The case when $K_X^2=1$ is identical and is omitted. Then only difference between the two cases is that in the Case 3.1 below, where the case when $D^p=D$ is studied, if $K_X^2=1$ then $|4K_X|$ is base point free while if $K_X^2\geq 2$, $|3K_X|$ is base point free~\cite{Ek88}. So in the case $K_X^2=1$,one has to work with the linear system $|4K_X|$ instead.

From now on assume $K_X^2\geq 2$. Suppose that the assumptions of the proposition do not hold, in their respective cases. Then in particular,  $K_X^2<p$. Hence by Theorem~\ref{sec3-lemma-2}, $D$ has only isolated singularities, i.e., $\Delta=0$. Therefore from the equation (\ref{sec2-eq-2}) it follows that  $K_X=\pi^{\ast}K_Y$. 
 Hence, since $K_X$ is ample, $K_Y$ is ample as well. Moreover, $Y$ is singular since if this was not true, then $K_X^2=pK_Y^2\geq p$.
 
Let $f \colon X^{\prime} \rightarrow X$ be the minimal resolution of $X$. Then, as before, since $X$ has canonical singularities, $K_X=f^{\ast}K_Y$ and therefore $X$ is a minimal surface of general type. Moreover, from the equation (\ref{00000}) it follows that \[
12\chi(\mathcal{O}_X)+11K_X^2+1\leq 17K_X^2+37<140K_X^2+3<p.
\]
Hence by Theorem~\ref{sec11-th-2}  every singular point of $X$ is a fixed point of $D$, $D$ lifts to a vector field $D^{\prime}$ on $X^{\prime}$ and that every $f$-exceptional curve is stabilized by $D^{\prime}$.

According to the classification of surfaces~\cite{BM76},~\cite{BM77}, $Z$ is one of the following: An abelian surface, a K3 surface, an Enriques surface or a hyperelliptic  surface.

\textbf{Case 1: Suppose that $Z$ is an abelian surface.} Then  every $g$-exceptional curve is also $h$-exceptional since by Lemma~\ref{sec3-lema-1} every $g$-exceptional curve is rational and there do not exist nontrivial maps from a rational curve to an abelian surface. Hence there exists a factorization
\begin{gather}\label{sec3-eq-9}
\xymatrix{
Y^{\prime}\ar[r]^g\ar[d]_{\phi} & Y \ar[dl]^{\theta}\\
Z & \\
}
\end{gather}
Let $B_j$, $j=1,\ldots, r$ be the $\theta$-exceptional curves. Then one can write 
\[
K_Y=\theta^{\ast}K_Z +\sum_{j=1}^r\gamma_j B_j.
\]
But then, since $\{B_j, \; 1\leq j \leq r\}$ is a contractible set of curves, it easily follows that 
\[
K_Y^2=\left(\sum_{j=1}^r\gamma_j B_j\right)^2\leq 0,
\]
which is impossible since $K_Y$ is ample. Therefore $Z$ cannot be an abelian surface.

\textbf{Case 2: Suppose that $Z$ is a hyperelliptic surface.} I will show that this case is also impossible. It is well known that if $Z$ is hyperelliptic,  then $b_1(Z)=2$~\cite{BM77} and hence $\dim \mathrm{Alb}(Z)=1$. Then the morphism $\phi \colon Z\rightarrow \mathrm{Alb}(Z)$ is an elliptic fibration~\cite{BM77}. Since every $g$-exceptional curve is rational, they must be contracted to points in $\mathrm{Alb}(Z)$. Hence there exists a factorization
\[
\xymatrix{
 & Y \ar[dr]^{\tilde{\psi}} & \\
 X \ar[ur]^{\pi}\ar[rr]^{\psi} & & \mathrm{Alb}(Z)
 }
 \]
 The general fiber $Y_b$ of $\tilde{\psi}$ is an elliptic curve. Hence $K_Y \cdot Y_b=0$. hence
 \[
 K_X\cdot \pi^{\ast}Y_b=\pi^{\ast}K_Y\cdot \pi^{\ast}Y_b=pK_Y\cdot Y_b=0,
 \]
 which is impossible since $K_X$ is ample. Hence $Z$ can be either a K3 surface or an Enriques surface.
 
 \textbf{Case 3: Suppose that $Z$ is a $K3$ surface.} Consider now two cases with respect to whether $D$ is of multiplicative or additive type.
 
 \textbf{Case 3.1.} Suppose that $D$ is of multiplicative type, i.e., $D^p=D$.

 By~\cite[Theorem 1.20]{Ek88}, $|3K_X|$ is base point free. Also, since $K_X=\pi^{\ast}K_Y$,  by Proposition~\ref{sec1-prop1}, there exists a $k$-linear map
 \begin{gather}\label{sec3-eq-10}
  D^{\ast} \colon H^0(\mathcal{O}_X(3K_X)) \rightarrow H^0(\mathcal{O}_X(3K_X)).
\end{gather}
Moreover, since $D^p=D$, $D^{\ast}$ is diagonalizable (with eigenvalues in the set $\{0,1,\ldots, p-1\}$) and their eigenvectors correspond to integral curves of $D$. Let 
\begin{gather}\label{section3-eq-11}
 H^0(\mathcal{O}_X(3K_X))=\oplus_{i=1}^{k} V(\lambda_i),
 \end{gather}
 the decomposition of $H^0(\mathcal{O}_X(3K_X))$ in eigenspaces of $D^{\ast}$, where $\lambda_i \in\mathbb{F}_p$, $1\leq i \leq k$.
 
 Suppose that $\dim |3K_X|=m$. Let  $Z_i$, $i=1,\ldots, m$ be a basis of $|3K_X|$ corresponding to eigenvectors of $D^{\ast}$. Since $K_X$ is ample it follows from~\cite[Corollary 7.9]{Ha77} that $Z_i$ is connected for all $i$. Now since $Z_i$ are eigenvectors of $D^{\ast}$, $Z_i$ are stabilized by $D$ and hence $D$ induces nontrivial vector fields on each $Z_i$. Moreover, if $K_X^2<p/3$, something which is true if the assumptions of the proposition hold,  then from  Corollary~\ref{sec1-cor1}, $D$ restricts to every reduced and irreducible component of $Z_i$, for all $1\leq i \leq m$.
 
 Since $Z$ is a $K3$ surface, $\omega_Z\cong \mathcal{O}_Z$. Hence from the equations (\ref{sec2-eq-1}) it follows that 
 \begin{gather}\label{sec3-eq-12}
 K_Y=\sum_{j=1}^sb_j \tilde{E}_j,
 \end{gather}
 where $\tilde{E}_j$ is the birational transform in $Y$ of the $h$-exceptional curves not contracted by $g$ (note that such curves exist because if this was not the case then $Y^{\prime}=Z$ and hence since $K_Z=0$ it would follow that $K_Y=0$ which is impossible since $K_Y$ is ample). In particular $p_g(Y)\not=0$ and hence 
 $p_g(X)\not=0$. 
 Let $C=\pi^{\ast}\tilde{E}$, where $\tilde{E}$ is any irreducible component of $K_Y$ in the equation (\ref{sec3-eq-12}). Then, since  $K_X^2<p$, $C$ is reduced and hence is an integral curve of $D$. 
 
 \textbf{Claim:} $D$ has at most two fixed points on $C$. 
 
 Indeed. From the equation (\ref{sec3-eq-12}) it follows that
 \begin{gather}\label{sec33-eq-1}
 K_X\cdot C=\pi^{\ast}K_Y \cdot \pi^{\ast} \tilde{E} =pK_Y\cdot \tilde{E}\leq pK_Y^2=K_X^2.
 \end{gather}
 Moreover, from Lemma~\ref{hodge}, $C^2\leq K_X^2$. Let $C^{\prime}=f_{\ast}^{-1}C$ be the birational transform of $C$ in $X^{\prime}$. Then 
 \[
 K_{X^{\prime}}\cdot C^{\prime}=f^{\ast}K_X \cdot C^{\prime}=K_X\cdot C \leq K_X^2.
 \]
 Moreover, $(C^{\prime})^2\leq C^2\leq K_X^2=K_{X^{\prime}}^2$. Therefore $p_a(C^{\prime})\leq K_X^2+1$. Then, the assumptions of the proposition imply that $K_X^2+1<(p-1)/2$. Hence it follows  from Corollary~\ref{sec1-cor2}  that  $D^{\prime}$ fixes the singular point of $C^{\prime}$ and lifts to its normalization $\bar{C}$. Suppose that $C^{\prime}$ is singular. Then by Corollary~\ref{sec1-cor2}, 
 $\bar{C}\cong \mathbb{P}^1$ and $D^{\prime}$ has at most two fixed points on $C^{\prime}$. Suppose that $C^{\prime}$ is smooth. Then it must be either a smooth rational curve or an elliptic curve. In the first case $D^{\prime}$ has exactly two fixed points on $C^{\prime}$. Suppose that $C^{\prime}$ is an elliptic curve. Then the map $C^{\prime} \rightarrow C$ factors through the normalization $\tilde{C}\rightarrow C$. Therefore there exists a purely inseparable map of degree $p$  map $C^{\prime} \rightarrow \tilde{C}$ of smooth curves. Moreover, since $C$ is the pushforward in $Y$ of an $h$-exceptional curve, $C$ is rational and hence $\tilde{C}=\mathbb{P}^1$. Therefore there exists a purely inseparable map of degree $p$, $C^{\prime}\rightarrow \mathbb{P}^1$. But this implies that there exists a map $\mathbb{P}^1 \rightarrow (C^{\prime})^{(p)}$, where $C^{\prime} \rightarrow (C^{\prime})^{(p)}$ is the $k$-linear Frobenius. But this is impossible since $(C^{\prime})^{(p)}$ is also an elliptic curve. Therefore, $C^{\prime}$ cannot be an elliptic curve and hence in any case $D^{\prime}$ has at most two fixed points on $C^{\prime}$.
 
 Next I will show that this implies that $D$ has at most two fixed points on $C$. Let $P \in C$ be a fixed point of $D$. 
 
 Suppose that $P$ is a smooth point of $X$. Then $Q=f^{-1}(P)$ is a fixed point of $D^{\prime}$ on $C^{\prime}$.
 
 Suppose that $P$ is a singular point of $X$. Let then $E$ be an $f$-exceptional curve which intersects $C^{\prime}$. By Theorem~\ref{sec11-th-2}, $E$ is stabilized by $D^{\prime}$. Then by repeating word by word the arguments that lead to the equation (\ref{sec3-eq-111111}) we find that $ E\cdot C^{\prime}\leq 2K_X^2+2$ and hence the assumptions of the proposition imply that $E\cdot C^{\prime} <p$. Therefore, by 
 Corollary~\ref{sec1-cor0}, every point of intersection of $E$ and $C^{\prime}$ is a fixed point of $D^{\prime}$ on $C^{\prime}$. Therefore $D$ has at most as many fixed points on $C$ as $D^{\prime}$ has on $C^{\prime}$ and hence at most 2. This concludes the proof of the claim.

 Therefore $C$ is a rational curve and $D$ has at most two fixed points on $D$. Let $P_1, P_2$  be the fixed points of $D$ on $C$, with the possibility that $P_1=P_2$.
 
Let $1\leq i \leq m$ be such that $C$ is not an irreducible component of $Z_i$.  Since $K_X$ is ample, it follows that $C\cdot Z_i>0$. For the same reason, 
$Z_i \cdot Z_j >0$ and therefore $Z_i \cap Z_j \not=\emptyset, $ for all $1\leq i,j \leq m$. 

Let now again $Z_i$ be a member of the basis of $|3K_X|$.  Let $A$ be an irreducible and reduced  component of $Z_i$ different from $C$ such that $C\cdot A>0$. I will show that every point of intersection of $C$ and $A$ is a fixed point of $D$. Indeed, from the definition of $C$ and $Z_i$, it follows that
 \[
C \cdot A \leq C\cdot Z_i =3K_X \cdot C \leq 3K_X^2<p
\]
by the equation (\ref{sec33-eq-1}) and the assumptions of the proposition. Hence by Corollary~\ref{sec1-cor0},  every point of intersection of $A$ and $C$ which is a smooth point of $X$ is a fixed point of $D$. The points of intersection of $A$ and $C$ which are singular points of $X$ are fixed points of $D$ always. Hence every point of intersection of $C$ and $A$ is a fixed point of $D$. In particular, every point of intersection of $C$ and $Z_i$ is a fixed point of $D$ (in the case $C$ is not a component of $Z_i$).
 
 Suppose that $P_1=P_2$. Let $1\leq i \leq m$. Then either $C$ is a component of $Z_i$ or $(Z_i \cap C)_{\mathrm{red}} =\{P_1\}$, for all $1\leq i\leq m$. But this implies that $P_1$ is a base point of $|3K_X|$, which is impossible. Hence $P_1\not= P_2$. For the same reason, it is not possible that either 
 $(Z_i \cap C)_{\mathrm{red}} =\{P_1\}$, for all $i$ or  $(Z_i \cap C)_{\mathrm{red}} =\{P_2\}$, for all $i$. Therefore there exist indices $1\leq i\not= j \leq m$, such that $(Z_i\cap C)_{\mathrm{red}} =\{P_1\}$ and 
 $(Z_j \cap C)_{\mathrm{red}}=\{P_2\}$. But then, since $Z_i \cap Z_j \not= \emptyset$ and the curves $Z_i$ and $Z_j$ are connected, the curve $W=Z_i +Z_j + C$ contains loops. Let $\tilde{Z}_i=\pi(Z_i)$, 
 $\tilde{Z}_j=\pi(Z_j)$. Then $\tilde{W}=\tilde{Z}_i\ + \tilde{Z}_j\ + \tilde{E}$ is a curve whose reduced curve $\tilde{W}_{\mathrm{red}}$ contains loops. Hence 
 $\dim H^1(\mathcal{O}_{\tilde{W}_{\mathrm{red}}})\geq 1$ and hence  $\dim H^1(\mathcal{O}_{\tilde{W}})\geq 1$ as well.
  
 Now since $Z$ is a $K3$ surface, it follows that $H^1(\mathcal{O}_Z)=0$ Hence $H^1(\mathcal{O}_{Y^{\prime}})=0$ and therefore from the Leray spectral sequence it follows that $H^1(\mathcal{O}_Y)=0$. Then from the exact sequence
 \[
 0 \rightarrow \mathcal{O}_Y(-\tilde{W}) \rightarrow \mathcal{O}_Y \rightarrow \mathcal{O}_{\tilde{W}} \rightarrow 0
 \]
 we get the exact sequence in cohomology
 \[
 \cdots 0=H^1(\mathcal{O}_Y) \rightarrow H^1(\mathcal{O}_{\tilde{W}}) \rightarrow H^2(\mathcal{O}_Y(-\tilde{W}) ) \rightarrow H^2(\mathcal{O}_Y) \rightarrow 
 H^2(\mathcal{O}_{\tilde{W}} )=0.
 \]
 Considering now that $h^1(\mathcal{O}_{\tilde{W}})\geq 1$,  $H^2(\mathcal{O}_Y(-\tilde{W}) )=H^0(\mathcal{O}_Y(\tilde{W}+K_Y))$, $H^2(\mathcal{O}_Y)=H^0(\mathcal{O}_Y(K_Y))$  and that $p_g(Y)\not= 0$, it follows that 
 \begin{gather}\label{sec3-eq-13}
 h^0(\mathcal{O}_Y(\tilde{W}+K_Y))\geq 2.
 \end{gather}
 Now since $\pi^{\ast}(\tilde{W}+K_Y)=W+K_X\sim 7K_X$ it follows that $|7K_X|$ contains a positive dimensional subsystem whose members are stabilized by $D$. Then by Proposition~\ref{sec1-prop-8}, and the assumptions of the proposition, this is impossible. Hence $Z$ cannot be a K3 surface.

 \textbf{Case 3.2.} Suppose that $D$ is of additive type, i.e., $D^p=0$.

 The main idea in order to treat this case this is the following. I will show that there exists a "small" positive number $\nu$ such that $\dim|\nu K_Y| \geq 1$ and then get a contradiction for $p$ large enough by Corollary~\ref{sec1-cor-8}.
 
 The main steps of the proof are the following.
 
 Let $F=\sum_{j=1}^nF_j$ be the reduced $g$-exceptional divisor. Then write $F=F^{\prime}+F^{\prime\prime}$, where $F^{\prime}=\sum_{j=1}^rF_j $, where $F_{j}$, $j=1,\ldots,r$ are the $g$-exceptional curves which are not $h$-exceptional, and $F^{\prime\prime}=\sum_{j=r+1}^nF_j$ are the $g$-exceptional curves that are also $h$-exceptional. Notice that $F^{\prime}\not=0$ because if that was the case then there would be a birational morphism $\psi \colon Y \rightarrow Z$. Then by the adjunction formula, 
 \[
 K_Y=\psi^{\ast}K_Z+ \tilde{F}=\tilde{F},
 \]
since $K_Z=0$, where $\tilde{F}$ is a $\psi$-exceptional divisor. Then $K_Y^2=\tilde{F}^2\leq 0$, which is impossible since $K_Y$ is ample.
 
 Then I will show that at least one of the following is true.
 
  \begin{enumerate}
 \item $h^0(\mathcal{O}_Y(2K_Y))\geq 2$ and hence $\dim|2K_Y|\geq 1$.
 \item There exists a divisor $B=\sum_{j=1}^r n_j F_j$, and a positive number $\nu$ such that either $\nu\leq K_X^2$ or $\nu=44$, and such that $\dim |\nu K_{Y^{\prime}}+B| \geq 1$. Moreover, the linear system $|\hat{B}|$, where $\hat{B}=h_{\ast}B$ in $Z$ is either base point free or its moving part is base point free. This implies that $\dim|\nu K_Y| \geq 1$. 
 \end{enumerate}
Then in both cases the claimed result will be obtained by using Corollary~\ref{sec1-cor-8}.

 The assumptions of the proposition imply that $10K_X^2+3<p$ and $4K_X^2+3<p$. Therefore, by Corollary~\ref{sec1-cor-8},  
 \begin{gather}\label{sec33-eq-4}
 H^0(\mathcal{O}_Y(K_Y))=H^0(\mathcal{O}_Y(2K_Y))=k
 \end{gather}
 and hence $p_g(Y)=1$.

 Next I will show that $Y$ has rational singularities. Indeed. The Leray spectral sequence for $g$ gives 
 \[
 0 \rightarrow H^1(\mathcal{O}_{Y}) \rightarrow H^1(\mathcal{O}_{Y^{\prime}})\rightarrow H^0(R^1g_{\ast}\mathcal{O}_{Y^{\prime}}) 
 \rightarrow H^2(\mathcal{O}_Y)\rightarrow H^2(\mathcal{O}_{Y^{\prime}}) \rightarrow H^1(R^1g_{\ast}\mathcal{O}_{Y^{\prime}}).
 \] 
 Now since $g$ is birational it follows that $H^1(R^1g_{\ast}\mathcal{O}_{Y^{\prime}})=0$. Moreover, by Serre duality, $H^2(\mathcal{O}_Y)\cong H^0(\mathcal{O}_Y(K_Y))=k$ and 
 $H^2(\mathcal{O}_{Y^{\prime}})\cong H^0(\mathcal{O}_{Y^{\prime}}(K_{Y^{\prime}}))=k$ and $H^1(\mathcal{O}_{Y^{\prime}})=0$, since $Z$ is a $K3$ surface. Hence from the Leray sequence it follows that $H^1(\mathcal{O}_Y)=0$ and  $ R^1g_{\ast}\mathcal{O}_{Y^{\prime}}=0$. Therefore $Y$ has rational singularities as claimed. In particular, every $g$-exceptional curve is a smooth rational curve.

 Let $\hat{F}_i=h_{\ast}F_i$, $i=1,\dots, r$, be the birational transforms of the $F_i$ in $Z$. Consider next cases with respect to whether the curves $\hat{F}_i$ are either all smooth or there exists a singular one among them. 
 
 
 \textit{Case 1.} Suppose that there exists an $1\leq i \leq r$ such that $\hat{F}_i$ is singular. In this case I will show that $\dim |(K_X^2)K_Y|\geq 2$ and then get a contradiction by Corollary~\ref{sec1-cor-8}.

After a renumbering of the $g$-exceptional curves we can assume that $i=1$. Then by the adjunction formula
 \[
 \hat{F}_1^2=2p_a(\hat{F}_1)-2-K_Z\cdot \hat{F}_1 =2p_a(\hat{F}_1)-2\geq 0.
 \]
 Hence the linear system $|\hat{F}_1|$ in $Z$ is base point free~\cite[Propositions 3.5,  3.10]{Hu16}.
 
\begin{claim}\label{sec3-claim-1}  Let $Q\in \hat{F}_1$ be a singular point of $\hat{F}_1$ and $m=m_Q(\hat{F}_1)$ be the multiplicity of the singularity. Then
\begin{gather}\label{sec3-eq-14}
m_Q(\hat{F}_1) \leq K_X^2.
\end{gather} 
 \end{claim}
In order to prove the claim, observe the following. Over a neighborhood of any singular point of $\hat{F}_1$, $F_1$ can meet at most two distinct $h$-exceptional curves $E_i$ and $E_j$, and moreover it must intersect each one of them with multiplicity 1. Indeed.

Suppose that $F_1$ meets three distinct $h$-exceptional curves $E_{i}$, $E_j$ and $E_s$ (over the same point of $Z$). Since $h$ is a composition of blow ups, it follows that  $E_i\cap E_j \cap E_s=\emptyset$. Hence the intersection of $F_1$ and $E_i\cup E_j \cup E_s$ consists of at least two distinct points, say $P$ and $Q$. Up to a change of indices we can assume hat $P\in E_i$ and $Q\in E_j$.  Then the union $\mathrm{Ex}(h)\cup F_1$, where $\mathrm{Ex}(h)$ is the exceptional set of $h$, contains a cycle. Therefore from the equations (\ref{sec2-eq-1}) it follows that
\begin{gather}\label{sec3-eq-14}
K_Y=\sum_{j=1}^n b_j\tilde{E}_j,
\end{gather}
where $\tilde{E}_j=g_{\ast}E_j$, $j=1,\ldots, n$. Moreover if $E_j^2=-1$, then $\tilde{E}_j\not=0$. But then, if $F_1$ meets at least two distinct $h$-exceptional curves, $\cup_{j=1}^n \tilde{E}$ contains either a singular curve or a cycle. In any case,  if $\tilde{C}=\sum_{j=1}^n b_j\tilde{E}_j$ then  
$H^1(\mathcal{O}_{\tilde{C}})\not=0$. But then from the equation in cohomology
\[
H^1(\mathcal{O}_Y) \rightarrow H^1(\mathcal{O}_{\tilde{C}}) \rightarrow H^2(\mathcal{O}_Y(-\tilde{C}))\rightarrow H^2(\mathcal{O}_Y)\rightarrow 0,
\]

and since $H^1(\mathcal{O}_Y)=0$, $H^2(\mathcal{O}_Y)=k$, it follows that $\dim H^2(\mathcal{O}_Y(-\tilde{C}))\geq 2$.  Then by duality,
\[
h^0(\mathcal{O}_Y(K_Y+\tilde{C}))=h^0 (\mathcal{O}_Y(2K_Y))\geq 2,
\]
a contradiction to the equations (\ref{sec33-eq-4}). Hence $F_1$ meets at most two distinct $h$-exceptional curves. Suppose that $F_1$ meets an $h$-exceptional curve $E_i$ and $E_i \cdot F_1 \geq 2$. Then there are two possibilities. Either $E_i$ is also $g$-exceptional or it is not. Suppose that $E_i$ is $g$-exceptional. But this is impossible because $Y$ has rational singularities and in such a case two $g$-exceptional curves cannot intersect with multiplicity bigger than one. Suppose that $E_i$ is not $g$-exceptional. Then $\tilde{E}_i=g_{\ast}E_i$ is singular and therefore $h^1(\mathcal{O}_{\tilde{E}_i})\geq 1$. But then $h^1(\mathcal{O}_{\tilde{C}})\geq 1$ and hence arguing as before we see that $h^0(\mathcal{O}_Y(2K_Y))\geq 2$, which is again a contradiction to the equations (\ref{sec33-eq-4}). Hence it has been shown that over a neighborhood of any singular point of $\tilde{F}_1$, $F_1$ meets at most two $h$-exceptional curves with multiplicity at most one. 

Next I will show that 
\begin{gather}\label{sec3-eq-15}
m_Q(\hat{F_1}) \leq K_{Y^{\prime}}\cdot F_1.
\end{gather}
The map $h$ is a composition of blow ups of points of $Z$. Since $\hat{F}_1$ is singular, $h$ must blow up the singular points of $\hat{F}_1$. Let $h_1 \colon Y_1 \rightarrow Z$ be the blow up of $Q\in Z$. Then there exists a factorization
\[
\xymatrix{
Y^{\prime}\ar[dr]^{h_2}\ar[rr]^h & & Z \\
    & Y_1\ar[ur]^{h_1} & 
    }
\]
Then also $h_1^{\ast}\hat{F}_1= (h_1)_{\ast}^{-1}\hat{F}_1 +m_Q(\hat{F}_1)E_1$, where $E_1$ is the $h_1$-exceptional curve and $  (h_1)_{\ast}^{-1}\hat{F}_1$ is the birational transform of $\hat{F}_1$ in $Y_1$.     
From this it follows that $E_1\cdot (h_1)_{\ast}^{-1}\hat{F}_1=m_Q(\hat{F}_1)$.  Also $K_{Y_1}=h_1^{\ast}K_Z+E_1=E_1$. Therefore $K_{Y_1}\cdot (h_1)_{\ast}^{-1}\hat{F}_1=m_Q(\hat{F}_1)$. Moreover,  
\[
K_{Y^{\prime}}=h_2^{\ast}K_{Y_1}+E^{\prime},
\]
where $E^{\prime}$ is an effective $h_2$-exceptional divisor. But then
\[
K_{Y^{\prime}}\cdot F_1 = h_2^{\ast}K_{Y_1}\cdot F_1+E^{\prime}\cdot F_1 \geq K_{Y_1}\cdot  (h_2)_{\ast}F_1 =K_{Y_1}\cdot (h_1)_{\ast}^{-1}\hat{F}_1 =m_Q(\hat{F}_1).
\]
This proves the claim. 

As it has been shown earlier, $F_1$ meets at most two $h$-exceptional curves $E_j$ and $E_s$, with the possibility $j=s$, each one of them  with intersection multiplicity one. 

Suppose that $E_i \not= E_j$ and that $F_1$ intersects $E_j$ and $E_s$ at the same point $Q$. Hence $E_j\cap E_s \cap F_1 \not=\emptyset$. Then since $Y$ has rational singularities it is not possible that $E_j$ and $E_s$ are both $g$-exceptional. 

Suppose that $E_s$ is $g$-exceptional but $E_j$ is not $g$-exceptional. Then $g_{\ast}E_j$ would be singular.  But then from the equation (\ref{sec3-eq-14}) and the arguments following it, we get again 
 that $\dim H^0(\mathcal{O}_Y(2K_Y)) \geq 2$, a contradiction to the equation (\ref{sec33-eq-4}).
 
 Hence neither of $E_j$ and $E_s$ is $g$-exceptional. Now write
 \[
 K_{Y^{\prime}}=b_jE_j+b_sE_s+\sum_{r\not= j,s}b_r E_r.
 \]
 Then from the equation (\ref{sec3-eq-15}) and the facts that $E_j\cdot F_1=E_s\cdot F_1=1$, $F_1 \cdot E_r=0$, for $r\not=j,s$,  it follows that 
 \[
 m_Q(\hat{F}_1)\leq K_{Y^{\prime}}\cdot F_1 =b_j+b_s.
 \]
 Then from the equation (\ref{sec3-eq-14}) and the fact that $E_j$ and $E_s$ are not $g$-exceptional it follows that 
 \[
 K_Y=b_j\tilde{E}_j+b_s\tilde{E}_s+\tilde{W},
 \]
 where $W$ is an effective divisor. Then since $K_X=\pi^{\ast}K_Y$ we get that 
\[
K_X=b_j\pi^{\ast}\tilde{E}_j+b_s\pi^{\ast}\tilde{E}_s+\pi^{\ast}\tilde{W}.
\]
Now considering that $K_X$ is ample we get that 
\[
m_Q(\hat{F}_i)\leq K_{Y^{\prime}}\cdot F_i =b_j+b_s\leq b_j\pi^{\ast}\tilde{E}_j \cdot K_X+b_s \pi^{\ast}\tilde{E}_s \cdot K_X\leq K_X^2,
\]
as claimed.

Suppose finally that $E_j=E_s$, i.e., $F_1$ meets exactly one $h$-exceptional curve. Then $K_{Y^{\prime}}\cdot F_1=b_j$. If $E_j$ is not $g$-exceptional then the previous argument proves the claim. Suppose that $E_j$ is also $g$-exceptional. Then there exists a $-1$ $h$-exceptional curve $E_{\lambda}$ such that $b_{\lambda}\geq b_j$. The previous argument now shows that $b_{\lambda} \leq K_X^2$ and hence \[
m_Q(\hat{F}_i) \leq b_j\leq b_{\lambda}\leq K_X^2.
\]
This concludes the proof of Claim~\ref{sec3-claim-1}.

\begin{claim}\label{sec3-claim-2} Let $B$ be any member of the linear system $|(K_X^2)K_{Y^{\prime}}+F_1|$.  Then  
\begin{gather}\label{sec3-eq-16}
B\sim W^{\prime}+\sum_{i=1}^m \gamma_i E_i,
\end{gather}
where $\gamma_i\geq 0$ for all $i$ and $W^{\prime}$ is the birational transform in $Y^{\prime}$ of a smooth curve $W$ in $Z$ such that  $|W|$ is base point free and $\mathrm{p}_a(W) \geq 1$.
\end{claim}

By~\cite[Proposition 3.5 and 3.10]{Hu16}, the linear system $|\hat{F}_1|$ is base point free and contains a smooth curve. Let $W \in |\hat{F}_1|$. be a general member. Then $W$ is reduced and irreducible and moreover it does not pass through $h(\mathrm{Ex}(h))$. Let $W^{\prime}$ be the birational transform of $W$ in $Y$. Then $W^{\prime}\cong W$. Now from Proposition~\ref{sec1-prop-6} it follows that 
\begin{gather}\label{sec3-eq-17}
\mu K_{Y^{\prime}}-h^{\ast}\hat{F}_1+F_1=\sum_{i=1}^m\gamma_i E_i,
\end{gather}
where $\gamma_i \geq 0$, for all $1\leq i \leq m$, and $\mu$ is the maximum of the multiplicities of the singular points of $\hat{F}_1$. But from Claim~\ref{sec3-claim-1} it follows that $\mu \leq K_X^2$. Hence 
\begin{gather}\label{sec3-eq-18}
(K_X^2)K_{Y^{\prime}}-h^{\ast}\hat{F}_1+F_1=\sum_{i=1}^m\gamma^{\prime}_i E_i,
\end{gather}
for some $\gamma_i^{\prime}\geq 0$, for all $1\leq i \leq m$. Let now  $W \in |\hat{F}_1|$ be a general member. Then $W^{\prime}=h^{\ast}\hat{F}_1=F_i +(h^{\ast}\hat{F}_1-F_1)$. Then from the equation (\ref{sec3-eq-17}) it follows that
\[
(K_X)^2K_{Y^{\prime}}+F_i=(K_X^2)K_{Y^{\prime}}+W^{\prime}-h^{\ast}\hat{F}_i+F_i=W^{\prime}+\sum_{i=1}^m\gamma^{\prime}_i E_i,
\]
for some $\gamma_i^{\prime}\geq 0$, $1\leq i \leq m$. This concludes the proof of Claim~\ref{sec3-claim-2}.

Now pushing down to $Y$ by $g_{\ast}$, and considering that $F_i$ is $g$-exceptional, we see that 
\begin{gather}\label{sec3-eq-19}
(K_X^2)K_Y\sim \tilde{W}+\sum_{j=1}^m\gamma_j^{\prime}\tilde{E}_j.
\end{gather}
 Moreover notice that from the construction of $W$, $\dim|\tilde{W}| \geq 1$ and therefore $\dim |(K_X^2)K_Y|\geq 1$. But according to the assumptions of the proposition, 
 \[
 (K_X^2)^3+3(K_X^2)^2+3<p,
 \]
 and therefore from Corollary~\ref{sec1-cor-8} we get a contradiction. Hence there is no $g$-exceptional curve $F_i$ such that $\hat{F}_i=h_{\ast}F_i$ is singular.
 
 \textit{Case 2.} Suppose that $\hat{F}_i$ is smooth for any $i=1,\ldots, r$. In this case I will show that $ K_X^2> (p-3)/506$.

 Since $\hat{F}_i$ is smooth it follows that $\hat{F}_i\cong \mathbb{P}^1$ and that $\hat{F}_i^2=-2$, for all $i=1,\dots,r$. Consider now cases with respect to whether or not every connected subset of the 
 set $\{\hat{F},\ldots,\hat{F}_r\}$ is contractible. 
 
 \textit{Case 2.1.} Suppose that every connected subset of $\{\hat{F}_1,\ldots,\hat{F}_r\}$ is contractible. Let $\phi \colon Z \rightarrow W$ be the contraction. Since $\hat{F}_i^2=-2$, for all $i=1,\ldots, r$, $W$ has Duval singularities. Therefore $K_Z=\phi^{\ast}K_W$. Hence,  since $K_Z=0$, $K_W=0$. Then there exists a factorization
 \[
 \xymatrix{
 Y^{\prime}\ar[rr]^g\ar[dr]^{\phi h} & & Y\ar[ld]^{\psi}\\
        & W &
 }
 \]
 Hence $K_Y=\psi^{\ast}K_W+\tilde{E}=\tilde{E}$, where $\tilde{E}$ is a divisor supported on the $\psi$-exceptional set. But then $K_Y^2=\tilde{E}^2\leq 0$, which is impossible since $K_Y$ is ample.
 
 \textit{Case 2.2.} There exists at least one connected subset of $\{\hat{F}_1,\ldots,\hat{F}_r\}$ which is not contractible. 
 
 \begin{claim} There exists integers $0 \leq \gamma_j \leq 22$, $j=1,\ldots, r$ such that the linear system $|44K_{Y^{\prime}}+\sum_{j=1}^r\gamma_j F_j|$ has dimension at least one. Moreover, let 
 $B\in |44K_{Y^{\prime}}+\sum_{j=1}^r\gamma_j F_j| $ be any member. Then if $K_X^2 <p/44$, 
 \[
 B\sim W^{\prime}+\sum_{i=1}^m \gamma_i E_i,
\]
where $\gamma_i\geq 0$ for all $i$ and $W^{\prime}$ is the birational transform in $Y^{\prime}$ of a reduced and irreducible curve $W$ in $Z$ such that  $|W|$ is base point free and $\mathrm{p}_a(W) \geq 1$. 

 \end{claim}\label{sec3-claim-5}
 
 In order to prove the Claim~\ref{sec3-claim-5} it is necessary to prove first the following.
 
 \begin{claim}\label{sec3-claim-6} 
There exist numbers $0 \leq \gamma_i \leq 23$, $i=1,\dots, r$ such that if $\Gamma=\sum_{i=1}^r\gamma_i \hat{F}_i$, then $\Gamma \cdot \hat{F}_i \geq 0$, for all $1\leq i \leq r$, and $\Gamma^2\geq 0$.
\end{claim}

I  proceed to prove the claims. Let $\{\hat{F}_1,\ldots, \hat{F}_s\}$, $s<r$, be the maximal connected subset of $\{\hat{F}_1,\ldots,\hat{F}_r\}$ which is contractible. Since the rank of $\mathrm{Pic}(Z)$ is at 
most 22~\cite{Hu16} it follows that $s\leq 22$. 


 Let $\phi \colon Z \rightarrow Z^{\prime}$ be the contraction of $\{\hat{F}_1,\ldots, \hat{F}_s\}$. Then $Z^{\prime}$ has Du Val singularities. Since $\cup_{i=1}^r\hat{F}$ is connected, there exists a curve $\hat{F}_j \in \{\hat{F}_{s+1},\ldots, \hat{F}_r\}$,  such that $\hat{F}_j\cap (\cup\hat{F}_{i=1}^s)\not=\emptyset$ and of course $\hat{F}_j$ does not contract by 
$\phi$. Let $F_j^{\prime}=\phi_{\ast}\hat{F}_j$. Observe now that one of the following happens.
\begin{enumerate}
\item $F^{\prime}_j$ is singular. In this case one of the following happens.
\begin{enumerate}
\item $\hat{F}_j$  meets two distinct $\phi$-exceptional curves, say $\hat{F}_{\lambda}$, $\hat{F}_{\mu}$, $1\leq \lambda < \mu \leq s$.  
\item $\hat{F}_j$ meets one $\phi$-exceptional curve $\hat{F}_i$, $i \leq s$, such that $\hat{F}_j \cdot \hat{F}_i \geq 2$. 
\item $\hat{F}_j$ meets exactly one $\phi$-exceptional curve $\hat{F}_i$ and $\hat{F}_i \cdot \hat{F}_j =1$.
\end{enumerate}
\item $F^{\prime}_j$ is smooth.
\end{enumerate}
 
 Suppose that the case 1.a happens. Then let $\Gamma=\hat{F}_j +\sum_{i=\lambda}^{\mu} \hat{F}_i$. Then this is a cycle of $-2$ rational curves and $\Gamma \cdot \hat{F}_i =0$, for all $i\in \{j, \lambda, \lambda +1,\ldots, \mu\}$, and $\Gamma^2=0$.
 
 Suppose that the case 1.b happens. Then let $\Gamma=\hat{F}_j+\hat{F}_i$. Then $\Gamma \cdot \hat{F}_j \geq 0$, $\Gamma \cdot \hat{F}_i \geq 0$ and $\Gamma^2\geq 0$.
 
 Suppose that the case 1.c happens. This can happen only when the fundamental cycle of the singularity of $W$ is not reduced, i.e., when $W$ has either a $D_s$, $E_6$, $E_7$ or $E_8$ singularity. 
 
 Suppose that $W$ has a $D_s$ singularity.  The fundamental cycle of the singularity is $\hat{F}_1 +2\sum_{i=1}^{s-2}\hat{F}_i +\hat{F}_{s-1}+\hat{F}_s$. Hence in this case  $\hat{F}_j$ must intersect some $\hat{F}_i$, $2\leq i \leq s-2$. Let $\Gamma=\hat{F}_j+\hat{F}_{i-1}+2\sum_{k=1}^{s-2}\hat{F}_k +\hat{F}_{s-1}+\hat{F}_s$. Then  $\Gamma \cdot \hat{F}_j = 0$, $\Gamma \cdot \hat{F}_k = 0$, $i-1\leq k \leq s$ and $\Gamma^2=0$. 
 
 The cases when $W$ has $E_6$, $E_7$ or $E_8$ singularities are treated similarly. 
 
 Suppose finally that case 2 happens, i.e., $F^{\prime}_j$ is smooth. Then write
 \[
 \phi^{\ast}F_j^{\prime}=\hat{F}_j+\sum_{i=1}^sa_i\hat{F}_i.
 \]
 Let $m$ be the index of $F^{\prime}_j$ in $S$. Then according to Proposition~\ref{sec1-prop-7}, $m\in\{2,3,4, s+1\}$ (the exact value of $m$ depends on the type of singularities of $S$). Moreover, if $S$ has an $A_s$ or $D_s$ singularity, then $ma_i \leq s$, for all $i=1,\ldots, s$. If $S$ has an $E_6$ singularity then $ma_i \leq 6$ for all $i$ and if $S$ has an $E_7$ singularity then $ma_i \leq 7$ for all $i$. In any case $ma_i$ are positive integers at most 22, for all $i=1,\ldots, s$, and $m\leq s+1\leq 23$. Let $\gamma_i= ma_i$, for all $i=1,\ldots, s$ and $\gamma_j=m$. Let also
 \[
 \Gamma=m\phi^{\ast} F_j^{\prime}=\gamma_j\hat{F}_j+\sum_{i=1}^s\gamma_i\hat{F}_i.
 \]
 Then $\Gamma \cdot \hat{F}_i =0$, $i=1,\dots, s$, and $\Gamma \cdot \hat{F}_j=m(F^{\prime}_j)^2\geq 0$ (if $(F^{\prime})^2<0$, then the set $\{\hat{F}_j, \hat{F}_1, \ldots, \hat{F}_s\}$ would be contractible which is not true). Moreover, $\Gamma^2\geq 0$. This concludes the proof of Claim~\ref{sec3-claim-6}.
 
 So it has been proved that there exists a nontrivial effective divisor $\Gamma=\sum_{i=1}^r \gamma_i \hat{F}_i$ in $Z$, such that $0\leq \gamma_i \leq 23$, $i=1,\dots,r$, and $\Gamma \cdot \hat{F}_i \geq 0$ for all $i=1,\dots, r$ and $\Gamma^2\geq 0$. In particular,  if three of the $\hat{F}_i$ meet at a common point or two have a tangency then $B$ is reduced.  Now since $\hat{F}_i$ is smooth for all $i$, every multiple $\gamma_i \hat{F}_i$ can be considered singular with multiplicity $\gamma_i \leq 23$ at every point. If two, say $\hat{F}_i$ and $\hat{F}_j$ meet at a point with multiplicity 1 then $\Gamma$ has at this point multiplicity $\gamma_i+\gamma_j\leq 23+23=46$. Therefore from Proposition~\ref{sec1-prop-6} it follows that $46K_{Y^{\prime}}-h^{\ast}\Gamma+\Gamma^{\prime}$ is an effective divisor, where $\Gamma^{\prime}=\sum_{i=1}^r \gamma_i F_i$. 
 
 Consider now cases with respect to $\Gamma^2$.
 
 Suppose that $\Gamma^2=0$. Then by~\cite[Proposition 3.10]{Hu16}, the linear system $|B\Gamma$ is base point free. Moreover, by~\cite[Theorem 6.3]{Jou83}, if $p\not=2,3$, 
 $\Gamma\sim p^{\nu}W$, where $W$ is a smooth irreducible elliptic curve. In fact $|\Gamma|$ is also base point free~\cite[Proposition 3.10]{Hu16}. I claim that if $\nu >0$, then $K_X^2>p^{\nu}/44$. Indeed. 
 \begin{gather}\label{sec3-eq-100}
 46K_{Y^{\prime}}+\Gamma^{\prime}=46K_{Y^{\prime}}+\Gamma^{\prime}-h^{\ast}\Gamma+h^{\ast}\Gamma=(46K_{Y^{\prime}}-h^{\ast}\Gamma+\Gamma^{\prime})+p^{\nu}W^{\prime}=\\
 p^{\nu}W^{\prime} +E,\nonumber
 \end{gather}
 where $E$ is an effective divisor whose prime components are $g$-exceptional and $h$-exceptional curves and $W^{\prime}$ is the birational transform of $W$ in $Y^{\prime}$ ($W$ can be chosen to avoid the points blown up by $h$). Then by pushing down to $Y$ and then pulling up on $X$ we find that
 \begin{gather}\label{sec3-eq-22}
 46K_X=p^{\nu}\pi^{\ast}\tilde{W}+\pi^{\ast}\tilde{E},
 \end{gather}
 where $\tilde{W}=g_{\ast}W$ and $\tilde{E}=g_{\ast}E$. Also notice that since $W$ moves in $Z$, $W^{\prime}$ is not $g$-exceptional and hence $\tilde{W}\not=0$. Then, since $K_X$ is ample, it follows that $46K_X^2\geq p^{\nu}$. But this is impossible since we are assuming that the inequalities of the statement of Claim 3.2 do not hold. Hence  $\nu=0$. Then by pushing the equation~\ref{sec3-eq-100} down to $Y$ we get that
 \[
 46K_Y=\tilde{W}+g_{\ast}E,
 \]
 where $\tilde{W}$ is the birational transform of $W$ in $Y$. Now since $\dim |W|\geq 1$ it follows that $\dim |46K_Y|\geq 1$. But according to the assumptions of the proposition,
 \[
 (46\cdot 49)K_X^2+3<p.
 \]
 Then from Corollary~\ref{sec1-cor-8} gives a contradiction. So it is not possible that $\Gamma^2=0$.

 Suppose finally that $\Gamma^2>0$. Then $\Gamma$ is nef and big. Then by~\cite[Corollary 3.15]{Hu16}, $\Gamma \sim mW+C$, where $W$ is a smooth elliptic curve and $C\cong \mathbb{P}^1$. Moreover, as before, the linear system $|W|$ is base point free~\cite[Proposition 3.10]{Hu16}. Repeating now the arguments of the previous case we find that
 \[
 46K_{Y^{\prime}}+\Gamma^{\prime}=mW^{\prime}+C^{\prime}+E,
 \]
 where $W^{\prime}$ and $C^{\prime}$ are the birational transforms of $W$ and $C$ in $Y^{\prime}$ and $E$ is effective. Repeating now word by word the arguments of the case when $\Gamma^2=0$ we get again  a contradiction.

Therefore, under the assumptions of the proposition, $Z$ is not a $K3$ surface.


 \textbf{Case 4: Suppose that $Z$ is Enriques.} 
  
 In this case, since we assume $p\not= 2$, $\pi_1(X)=\pi_1(Z)=\mathbb{Z}/2\mathbb{Z}$. Then there exists an \'etale double cover  
 $\nu \colon W\rightarrow X$ of $X$. Then $K_W=\nu^{\ast}K_X$ and $K_W^2=2K_X^2$. Also $D$ lifts to a nontrivial global vector field $D^{\prime}$ on $W$. Then in the corresponding diagram (\ref{sec2-diagram-1}) for $W$, $Z$ is going to be a $K3$ surface. Then, under the assumptions of the proposition,  the results from the previous cases for $W$ show that $Z$ cannot also be an Enriques surface.

 \end{proof}

\section{The Kodaira dimension of $Z$ is $-1$.}\label{sec-6}
\begin{proposition}\label{sec6-prop}
Let $X$ be a  canonically polarized surface defined over an algebraically closed field of characteristic $p>0$.  Suppose that $X$  admits a nontrivial global vector field $D$ such that $D^p=0$ or $D^p=D$.
Suppose  also that, with notation as in section~\ref{sec-3},   $Z$ has Kodaira dimension $-1$ and that one of the following holds
\begin{enumerate}
\item $K_X^2=1$ and $p>211$.
\item $K_X^2\geq 2$ and $p> 156K_X^2+3$.
\end{enumerate}
Then $X$ is unirational and $\pi_1(X)=\{1\}$.
\end{proposition}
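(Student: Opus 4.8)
The plan is to reduce the whole statement to the single assertion that the base curve $B$ of the ruled surface $Z$ is isomorphic to $\mathbb{P}^1$. First I would record this reduction. Since $\pi\colon X\to Y$ is purely inseparable it is a universal homeomorphism, hence induces $\pi_1(X)=\pi_1(Y)$, and since $g,h$ are birational one has $K(Y)=K(Z)$ with $K(Y)\subseteq K(X)\subseteq K(Y)^{1/p}$. If $Z$ is rational then $K(Y)=k(s,t)$, so $K(Y)^{1/p}=k(s^{1/p},t^{1/p})$ is again rational; the inclusion $K(X)\subseteq K(Y)^{1/p}$ then exhibits $X$ as dominated by a rational surface, so $X$ is unirational, while the surjection $\pi_1(Y')\twoheadrightarrow\pi_1(Y)$ attached to the resolution $g$, together with $\pi_1(Y')=\pi_1(Z)=\{1\}$, gives $\pi_1(X)=\pi_1(Y)=\{1\}$. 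Since $\kappa(Z)=-1$, by the classification of surfaces $Z$ is geometrically ruled over a smooth curve $B$ (or $Z\cong\mathbb{P}^2$, which is already rational), and $Z$ is rational precisely when $B\cong\mathbb{P}^1$. Thus it suffices to prove $B\cong\mathbb{P}^1$.

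To produce a rational curve dominating $B$ I would use the $g$-exceptional curves $F_1,\dots,F_n$, which are rational by Lemma~\ref{sec3-lema-1}. If some $F_i$ is not contracted by the composite $Y'\to Z\to B$, then $B$ is dominated by a rational curve and hence $B\cong\mathbb{P}^1$, as desired. So I may assume every $F_i$ is contracted over $B$. Then $Y'\to B$ factors through $g$, yielding $\psi\colon Y\to B$ and a factorization $\sigma=\psi\circ\pi\colon X\to B$ through the quotient; in particular $D$ is tangent to the fibres of $\sigma$. Writing $F'$ for a general fibre of $Y'\to B$ (chosen to avoid the centres blown up by $h$), one has $K_{Y'}\cdot F'=-2$ and $F_i\cdot F'=0$ for all $i$, so pushing forward by $g$ and using $g^{\ast}K_Y=K_{Y'}+\sum_i a_iF_i$ from~(\ref{sec2-eq-1}) gives $K_Y\cdot Y_b=-2$ for the general fibre $Y_b=g_\ast F'$ of $\psi$.

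Now I would contradict $K_Y\cdot Y_b=-2$, splitting on the divisorial part $\Delta$ of $D$. If $\Delta=0$ then $K_X=\pi^{\ast}K_Y$ by~(\ref{sec2-eq-2}), so $K_Y$ is ample and $K_Y\cdot Y_b>0$, a contradiction; hence the ``all $F_i$ contracted'' case cannot occur and $B\cong\mathbb{P}^1$. If $\Delta\neq0$, then (as $p>156K_X^2+3>K_X^2$) Theorem~\ref{sec3-lemma-2} already gives that $X$ is purely inseparably uniruled, but I must still force $B\cong\mathbb{P}^1$. For this I would intersect $K_X=\pi^{\ast}K_Y+(p-1)\Delta$ with the fibre class $f=\pi^{\ast}Y_b$: ampleness of $K_X$ together with $\pi^{\ast}K_Y\cdot f=p(K_Y\cdot Y_b)=-2p$ forces $(p-1)(\Delta\cdot f)>2p$, so $\Delta\cdot f>2$ and $\Delta$ has a horizontal part meeting the general fibre. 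On the other hand $\Delta\subseteq\mathrm{Fix}(D)$, while on a general fibre $X_b$ the restriction $D|_{X_b}$ is a nonzero field of additive or multiplicative type whose fixed locus has at most two points by Corollary~\ref{sec1-cor2} (the fibre being a rational integral curve of $D$ with $p_a(X_b)<(p-1)/2$, guaranteed by the bounds $K_X\cdot\Delta\leq 3K_X^2$, $\Delta^2\leq 9K_X^2$ of Proposition~\ref{sec11-prop-5} and the hypothesis on $p$). Capping $\Delta\cdot f$ by this fixed-point count contradicts $\Delta\cdot f>2$, and again $B\cong\mathbb{P}^1$.

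The hard part is exactly the divisorial case $\Delta\neq0$: Corollary~\ref{sec1-cor2} bounds the \emph{number} of points of $\Delta\cap X_b$ but not a priori the intersection \emph{multiplicities}, so turning ``$\Delta$ meets the general fibre in at most two points'' into the numerical inequality $\Delta\cdot f\leq 2$ requires a local analysis of $D$ along the general fibre (showing the fixed points are simple and that $\Delta$ is transverse there), together with a check that the general fibre is reduced and irreducible so that Corollary~\ref{sec1-cor2} genuinely applies and that the horizontal multisection cut out on $\Delta$ has arithmetic genus below $(p-1)/2$. It is in this bookkeeping, and in keeping every auxiliary curve's genus under $(p-1)/2$, that the explicit bound $156K_X^2+3$ (respectively $211$ when $K_X^2=1$) is consumed.
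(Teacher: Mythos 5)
Your reduction to showing $B\cong\mathbb{P}^1$, the use of the rational $g$-exceptional curves, and the case $\Delta=0$ all match the paper's proof. The genuine gap is in the case $\Delta\neq 0$. Your plan is to restrict $D$ to a general fibre $X_b$ of $\sigma\colon X\to B$ and invoke Corollary~\ref{sec1-cor2} to bound its fixed locus by two points, contradicting $\Delta\cdot f>2$. But Corollary~\ref{sec1-cor2} (via Proposition~\ref{sec1-prop4}) requires $p_a<(p-1)/2$ for the curve in question, and the general fibre violates this for a structural reason: your own computation gives $K_X\cdot X_b=-2p+(p-1)\,\Delta\cdot f$ with $\Delta\cdot f\geq 3$ and $X_b^2=0$, hence $p_a(X_b)\geq 1+\tfrac{1}{2}(p-3)=(p-1)/2$. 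This is not a bookkeeping issue that a stronger hypothesis $p>f(K_X^2)$ can repair, since the fibre genus grows linearly with $p$; it is the Tate genus-change phenomenon for purely inseparable quotient fibrations. So neither the rationality of the fibre nor the two-fixed-point bound is available, and the contradiction with $\Delta\cdot f>2$ cannot be drawn this way. (The multiplicity issue you flag at the end is real too, but it is secondary to this.)

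The paper's proof of this case is structurally different and is designed precisely to avoid curves of fibre type. It shows $\mathcal{O}_X(3K_X+3\Delta)=(\pi^{\ast}N)^{[1]}$ for a reflexive sheaf $N$ on $Y$, applies Proposition~\ref{sec1-prop1} to get a member $C\in|3K_X+3\Delta|$ stabilized by $D$, and bounds $K_X\cdot C\leq 12K_X^2$ and $C^2\leq 144K_X^2$ using Proposition~\ref{sec11-prop-5}; the components of $C$ not contained in $\Delta$ then have $p_a\leq 78K_X^2+1<(p-1)/2$, so Proposition~\ref{sec1-prop4} applies and they are rational. Finally, an intersection computation of $C=\Delta^{\prime}+Z$ against the fibre class (using $\Delta^{\prime}\cdot F\leq m\,\Delta\cdot F$ with $m\leq 12K_X^2$ and $\Delta\cdot F\geq 3$) shows that some rational component must dominate $B$. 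If you want to salvage your outline, you should replace the fibre $X_b$ by such an auxiliary integral curve of bounded genus; as written, the divisorial case does not go through.
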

\begin{proof}
 
 I will only do the case when $K_X^2\geq 2$. The only difference between the two case is that in the proof  one has to use the inequalities in Proposition~\ref{sec11-prop-5} that correspond to each case. Otherwise the proofs are identical.
 
 Let $f \colon X^{\prime}\rightarrow X$ be the minimal resolution of $X$. Then from the inequalities (\ref{00000}) it follows that
 \[
 12\chi(\mathcal{O}_X)+11K_X^2+1\leq 17K_X^2+37<156K_X^2+3<p,
 \]
 from the assumptions. Therefore from Theorem~\ref{sec11-th-2}, $D$ lifts to a vector field $D^{\prime}$ on $X^{\prime}$. Moreover, every $f$-exceptional curve is stabilized by $D^{\prime}$.

 Since $\kappa(Z)=-1$, $Z$ is a ruled surface. Hence there exists a fibration of smooth rational curves $\phi \colon Z \rightarrow B$, where $B$ is a smooth curve. 
 
 \textbf{Claim:} Under the conditions of the proposition, $B\cong \mathbb{P}^1$.

 Suppose that the claim has been proved. Then $Z$ and hence $Y^{\prime}$ are rational. In particular $\pi_1(Y^{\prime})=\pi_1(Z)=\{1\}$. Then there exists a commutative diagram
 \begin{gather*}
 \xymatrix{
 Y^{\prime}_{(p)} \ar[r]^{\sigma} \ar[dr]_{F_{(p)}} & \hat{X} \ar[r]^{\hat{g}} \ar[d]^{\hat{\pi}} & X \ar[d]^{\pi}\\
   & Y^{\prime}\ar[r]^g  &Y
 }
 \end{gather*}
 Where $\hat{X}$ is the normalization of $Y^{\prime}$ in $X$, $\hat{\pi}$ and $\sigma$ are purely inseparable maps of degree $p$, $\hat{g}$ is birational and $F_{(p)}$ is the $k$-linear Frobenius. Therefore, since $Y^{\prime}$ is rational, $Y^{\prime}_{(p)}$ is also rational and hence $X$ is purely inseparably unirational. Moreover, since $\hat{\pi}$ is purely inseparable, it follows that $\pi_1(\hat{X})=\pi_1(Y^{\prime})=\{1\}$. Then, since $\hat{g}$ is birational and $\hat{X}$ and $X$ are normal, it follows by~\cite[Chapter X]{Gr60} that the natural map $\pi_1(\hat{X}) \rightarrow \pi_1(X)$ is surjective. Therefore $\pi_1(X)=\{1\}$.
 
 Therefore it remains to prove the claim. 
 
  Suppose that a $g$-exceptional curve $F$ does not map to a point in $B$ by the map $\phi h$. Then there exists a dominant morphism $F\rightarrow B$. But since $F$ is a rational curve then $B\cong \mathbb{P}^1$. 
 
 Suppose that every $g$-exceptional curve is contracted to a point in $B$ by $\phi h$. Then there exists a factorization
 \begin{gather}\label{sec66-diagram-1}
 \xymatrix{
 Y^{\prime}\ar[r]^g\ar[d]^h & Y \ar[d]^{\psi}\\
 Z \ar[r]^{\phi} & B
 }
 \end{gather}
The general fiber of $\psi$ is a smooth rational curve. Also, since the $g$-exceptional set is contained in fibers of $\phi h$, $Y$ has rational singularities. Let $\sigma \colon X \rightarrow B$ be the composition $\psi\pi$.

 Consider next cases with respect to whether the divisorial part $\Delta$ of $D$ is zero or not.
 
 \textbf{Case 1:  $\mathbf{\Delta=0}$.}
 
 Then $K_X=\pi^{\ast}K_Y$ and hence, since $\pi$ is a finite map,  $K_Y$ is ample.  Let $Y_b$ be a general fiber of $\psi$. Then $Y_b\cong \mathbb{P}^1$. Therefore since $Y_b^2=0$, it follows that $K_Y \cdot Y_b=-2$,which is impossible since $K_Y$ is ample. Therefore in this case $B\cong \mathbb{P}^1$.

 \textbf{Case 2: $\mathbf{\Delta\not= 0}$.}

In order to show that $B\cong \mathbb{P}^1$ I will show that there exists a rational curve (in general singular) $C$ in $X$ which dominates $B$. The method to find such a rational curve is to show that there exists an integral curve $C$ of $D$ on $X$ which dominates $B$. Then by Corollary~\ref{sec1-cor2}, if the arithmetic genus of $C$ is small compared to the characteristic $p$, $C$ is rational. Finally, integral curves of $D$ will be found by utilizing Proposition~\ref{sec1-prop1}.

By~\cite[Theorem 1.20]{Ek88}, the linear system $|3K_X|$ is base point free. Then by~\cite[Theorem 6.3]{Jou83}, the general member of $|3K_X|$ is of the form $p^{\nu} C$, where $C$ is an irreducible and reduced curve. Suppose that $\nu >0$. Then $K_X^2>p/3$, which is impossible from the assumptions of the proposition. Hence the general member of $|3K_X|$ is reduced and irreducible (but perhaps singular). 

The assumptions of the proposition imply that $p$ does not divide $K_X^2$. Therefore, from Proposition~\ref{sec11-prop-5} it follows that
\begin{gather}\label{sec4-eq-1000}
K_X\cdot \Delta \leq 3K_X^2 \\\nonumber
\Delta^2\leq 9K_X^2.
\end{gather}

\textbf{Claim:} There exists a rank 1 reflexive sheaf $M$ on $Y$ such that 
\[
\mathcal{O}_X(K_X+\Delta)=(\pi^{\ast} M)^{[1]}.
\]
I proceed to prove the claim. According to the adjunction formula (\ref{sec2-eq-2}) for $\pi$,
\begin{gather}\label{sec4-eq-6}
K_X+\Delta=\pi^{\ast}K_Y+p\Delta,
\end{gather}
Let now $U\subset X$ be the smooth part of $X$ and $V=\pi(U) \subset Y$. Then $V$ is also open. Since $\pi$ is purely inseparable of degree $p$, if $L$ is an invertible sheaf on $U$, then $L^p=\pi^{\ast}N$, where $N$ is an invertible sheaf on $V$~\cite[Proposition 3.8]{Tz17b}. Therefore,
\[
(\mathcal{O}_X(\Delta)|_U)^p=\pi^{\ast}M_V,
\]
where $M_V$ is an invertible sheaf on $V$. Since $X$ and $Y$ are normal, $U$ and $V$ have codimension 2 in $X$ and $Y$, respectively, and therefore it easily follows that 
\[
\mathcal{O}_X(p\Delta) = (\pi^{\ast}M)^{[1]},
\]
where $M=i_{\ast}M_V$, $i \colon V \rightarrow Y$ is the inclusion. From this and the equation (\ref{sec4-eq-6}) the claim follows. Therefore also 
\[
\mathcal{O}_X(3K_X+3\Delta) =(\pi^{\ast}N)^{[1]},
\]
where $N=M^{[3]}$. Hence by Proposition~\ref{sec1-prop1}, there exists a $k$-linear map
\[
D^{\ast} \colon H^0(\mathcal{O}_X(3K_X+3\Delta)) \rightarrow H^0(\mathcal{O}_X(3K_X+3\Delta)).
\]

Let $C \in |3K_X+3\Delta|$ be a curve which corresponds to an eigenvector of $D^{\ast}$. Then by Proposition~\ref{sec1-prop1}, $C$ is stabilized by $D$. Moreover, from the equations (\ref{sec4-eq-1000})  it follows that 
\begin{gather}\label{sec4-eq-7}
K_X\cdot C=3K_X^2+3K_X\cdot \Delta \leq 12 K_X^2,\\
C^2 =9K_X^2+9\Delta^2+18 K_X \cdot \Delta\leq 144K_X^2 \nonumber
\end{gather}

Let now $C=\sum_{i=1}^s n_i C_i$ be the decomposition of $C$ into its prime divisors. The assumptions of the proposition imply that $K_X\cdot C=3K_X^2 <p$. Hence by Corollary~\ref{sec1-cor1}, every component $C_i$ of $C$ is stabilized by $D$ and hence $D$ induces a vector field on $C_i$, for all $i$. The induced vector field will be non zero if and only if $C_i$ is not contained in the divisorial part $\Delta$ of $D$.

\textbf{Claim:} Suppose that $C_i$ is not contained in the divisorial part $\Delta$ of $D$. The $C_i$ a rational curve.

I proceed now to prove the claim.  Let $C_i^{\prime}=f_{\ast}^{-1}C_i$ be the birational transform of $C_i$ in the minimal resolution  $X^{\prime}$ of $X$. Then  $C_i^{\prime}$ is stabilized by $D^{\prime}$. Let $\nu \colon \bar{C}_i \rightarrow C^{\prime}_i$ be the normalization of $C^{\prime}_i$, which is also the normalization of $C_i$.Since $K_X$ is ample, it follows from the equations (\ref{sec4-eq-7}) than $K_X \cdot C_i \leq K_X\cdot C \leq 12K_X^2$. Then also
\[
K_X^{\prime} \cdot C_i^{\prime}=f^{\ast}K_X\cdot C_i^{\prime}=K_X\cdot C_i \leq 12K_X^2,
\]
and from the Hodge Index Theorem, 
\[
(C^{\prime}_i)^2\leq  \frac{(K_{X^{\prime}}\cdot C_i^{\prime})^2}{K_{X^{\prime}}^2}\leq 144 K_X^2.
\]
Therefore from the adjunction formula, \[
\mathrm{p}_a(C_i^{\prime})\leq 78K_X^2+1<(p-1)/2,
\]
from the assumptions of the proposition.  Hence from Proposition~\ref{sec1-prop4} it follows that $D^{\prime}$ fixes every singular point of $C^{\prime}_i$, for all $i=1,\ldots, s$ and $D^{\prime}$ lifts to a vector field $\bar{D}$ in the normalization $\bar{C}_i$ of $C^{\prime}_i$. Therefore $\bar{C}_i$ is either a smooth rational curve or an elliptic curve. I will show that $D^{\prime}$ has fixed points on $C_i^{\prime}$ and hence by 
Lemma~\ref{sec11-lemma-1}, $\bar{D}$ has also fixed points and hence $\bar{C}_i\cong \mathbb{P}^1$. Therefore $C_i$ is rational.

Next I will show that  there exists a fixed point of $D$ on $C_i$. Suppose that this was not the case and that $D$ has no fixed points on $C_i$. Then $C_i$ is in the smooth part of $X$ since by Theorem~\ref{sec11-th-2}, $D$ fixes every singular point of $X$. Then if $\tilde{C}_i=\pi(C_i)$, $\tilde{C}_i$ is in the smooth locus of $Y$. Since there are no fixed points of $D$ on $C$, $C \cdot \Delta =0$. Then from the adjunction formula for $\pi$ we get that 
\[
K_X\cdot C_i=\pi^{\ast}K_Y\cdot C_i=\pi^{\ast}K_Y\cdot \pi^{\ast}\tilde{C_i}=pK_Y\cdot \tilde{C_i},
\]
and therefore $K_X\cdot C_i \geq p$. On the otherhand it has been shown that $K_X\cdot C_i \leq 12 K_X^2<p$, by the assumptions of the proposition.   Hence there exists fixed points of $D$ on every $C_i$ and therefore $\bar{C}_i\cong \mathbb{P}^1$ and hence $C_i$ is rational as claimed.

Now let $\Delta^{\prime}=\sum_{i=1}^{\nu}n_i C_i$, where $C_i$, $1\leq i \leq \nu \leq s$ are the irreducible components of $C$ that are also components of $\Delta$ (and hence the restriction of $D$ on $C_i$ is zero). Let also $Z=\sum_{j=\nu +1}^{s}n_i C_i$, where $C_j$ are the irreducible components of $C$ which are not contained in $\Delta$ and therefore the restriction of $D$ on $C_j$, $j \geq \nu +1$, is not zero (if $\nu =s$ then $Z=0$). Then $C=\Delta^{\prime} +Z$. 

Next I will show that $Z\not= 0$ and that there is a component of it which dominates $B$. Hence $B$ is rational. 

Suppose that this is not true and that either $Z=0$ or no component of $Z$ dominates $B$. Therefore either $Z=0$ or $Z$ is contained in a finite union of fibers of $\psi h\colon X\rightarrow B$. Let $F$ be a general fiber of $\psi h$. Then in both cases $F \cdot Z=0$. Then if we write $3K_X=C-3\Delta=\Delta^{\prime}+Z-3\Delta$, the adjunction formula for $\pi$ becomes
\[
\Delta^{\prime}+Z=3\pi^{\ast}K_Y +3p\Delta.
\]
Intersecting this with a general fiber $F$ and taking into consideration that $F\cdot Z=0$ and that $F\cdot \pi^{\ast}K_Y=-2p$ we find that
\begin{gather}\label{sec4-eq-8}
\Delta^{\prime}\cdot F=-6p+3p( \Delta \cdot F).
\end{gather}
Now 
\begin{gather}\label{sec4-eq-9}
\Delta^{\prime}\cdot F=\sum_{i=1}^{\nu} n_i( C_i \cdot F)\leq m \left(\sum_{i=1}^{\nu}(C_i \cdot F)\right) \leq m \Delta \cdot F,
\end{gather}
where $m$ is the maximum among the $n_1, \ldots ,n_{\nu}$ such that $C_i \cdot F \not= 0$. Notice that it is not possible that $C_i \cdot F=0$, for all $i=1,\ldots, \nu$. If this was the case, then $\Delta^{\prime}\cdot F=0$. But since also we assume that $Z\cdot F=0$, it would follow that $C\cdot F=0$ and hence $(K_X+\Delta)\cdot F=0$. But then
\[
K_X \cdot F=-\Delta \cdot F \leq 0,
\]
for a general fiber $F$. But this is impossible since $K_X$ is ample. Hence $\Delta^{\prime}\cdot F>0$ and hence $m>0$.

Next I will show that $m \leq 12K_X^2$. Indeed. From the definition of $\Delta^{\prime}$ and the equation (\ref{sec4-eq-7}) it follows that
\[
m \leq \sum_{i=1}^{\nu}n_i\leq \sum_{i=1}^{\nu} n_i (K_X \cdot C_i) =K_X \cdot \Delta^{\prime} \leq K_X \cdot C \leq 12K_X^2,
\]
as claimed. Then from the equations (\ref{sec4-eq-8}), (\ref{sec4-eq-9}) it follows that
\begin{gather}\label{sec4-eq-10}
(12K_X^2-3p)\Delta\cdot F +6p >0.
\end{gather}
Notice now that from the adjunction formula for $\pi$ it follows that
\[
K_X\cdot F=\pi^{\ast}K_Y\cdot F+(p-1)\Delta \cdot F=-2p+(p-1)\Delta\cdot F.
\]
Then since $K_X \cdot F>0$, it follows that $\Delta \cdot F \geq 3$. Now the assumptions of the proposition imply that $K_X^2<p/12$. Then it is easy to see that
\[
(3p-12K_X^2)\Delta \cdot F-6p >0,
\]
which is a contradiction to the equation (\ref{sec4-eq-10}). Therefore it is not possible that $Z\cdot F=0$. Hence there exists a component $C_i$ of $C$ such the restriction of $D$ on $C$ is not zero and $C_i$ dominates $B$. Then since $C_i$ is rational, it follows that $B\cong \mathbb{P}^1$. This concludes the proof of Proposition~\ref{sec6-prop}.

 \end{proof}


\end{document}